\newcommand\RedeclareMathOperator{%
  \@ifstar{\def\rmo@s{m}\rmo@redeclare}{\def\rmo@s{o}\rmo@redeclare}%
}
\newcommand\rmo@redeclare[2]{%
  \begingroup \escapechar\m@ne\xdef\@gtempa{{\string#1}}\endgroup
  \expandafter\@ifundefined\@gtempa
     {\@latex@error{\noexpand#1undefined}\@ehc}%
     \relax
  \expandafter\rmo@declmathop\rmo@s{#1}{#2}}
\newcommand\rmo@declmathop[3]{%
  \DeclareRobustCommand{#2}{\qopname\newmcodes@#1{#3}}%
}
\newcommand\reallywidehat[1]{%
\savestack{\tmpbox}{\stretchto{%
  \scaleto{%
    \scalerel*[\widthof{\ensuremath{#1}}]{\kern.1pt\mathchar"0362\kern.1pt}%
    {\rule{0ex}{\textheight}}%WIDTH-LIMITED CIRCUMFLEX
  }{\textheight}% 
}{2.4ex}}%
\stackon[-6.9pt]{#1}{\tmpbox}%
}
\DeclareSymbolFont{rsfs}{U}{rsfs}{m}{n}
\DeclareSymbolFontAlphabet{\mathscrsfs}{rsfs}
\numberwithin{equation}{section}
\renewcommand{\paragraph}[1]{\noindent\textbf{#1}.\quad}
\newtheoremstyle{myexample} % name
    {\topsep}                    % Space above
    {\topsep}                    % Space below
    {\rm }                   % Body font
    {}                           % Indent amount
    {\bf }                   % Theorem head font
    {.}                          % Punctuation after theorem head
    {.5em}                       % Space after theorem head
    {}  % Theorem head spec (can be left empty, meaning normal)
\newtheoremstyle{myremark} % topsep
    {\topsep}                    % Space above
    {\topsep}                    % Space below
    {\rm}                        % Body font
    {}                           % Indent amount
    {\bf}                        % Theorem head font
    {.}                          % Punctuation after theorem head
    {.5em}                       % Space after theorem head
    {}  % Theorem head spec (can be left empty, meaning normal)
\newtheorem{claim}{Claim}[section]
\newtheorem{lemma}[claim]{Lemma}
\newtheorem{assumption}{Assumption}[]
\newtheorem{theorem}{Theorem}
\newtheorem{proposition}[claim]{Proposition}
\newtheorem{corollary}[claim]{Corollary}
\theoremstyle{definition}
\newtheorem{definition}[claim]{Definition}
\theoremstyle{myremark}
\newtheorem{remark}{Remark}[section]
\theoremstyle{myremark}
\theoremstyle{myexample}
\definecolor{darkgreen}{rgb}{0.0, 0.5, 0.0}
\newcommand{\bea}{\begin{eqnarray}}
\newcommand{\eea}{\end{eqnarray}}
\newcommand{\<}{\langle}
\renewcommand{\>}{\rangle}
\newcommand{\wt}{\widetilde}
\newcommand{\op}{\text{op}}
\newcommand{\wh}{\widehat}
\def\I{{\rm I}}
\def\II{{\rm II}}
\def\eps{{\varepsilon}}
\def\id{{\boldsymbol{I}}}
\def\bh{\boldsymbol{h}}
\def\btheta{{\boldsymbol{\theta}}}
\def\bTheta{{\boldsymbol{\Theta}}}
\def\bferr{{\boldsymbol{err}}}
\def\bZ{{\boldsymbol{Z}}}
\def\bW{{\boldsymbol{W}}}
\def\bE{{\boldsymbol{E}}}
\def\bP{{\boldsymbol{P}}}
\def\bc{{\boldsymbol{c}}}
\def\bC{{\boldsymbol{C}}}
\def\bQ{{\boldsymbol{Q}}}
\def\bV{{\boldsymbol{V}}}
\def\bg{{\boldsymbol{g}}}
\def\bx{{\boldsymbol{x}}}
\def\bzero{{\mathbf 0}}
\def\bone{{\mathbf 1}}
\def\cF{{\mathcal F}}
\def\cT{{\mathcal T}}
\def\blambda{{\boldsymbol \lambda}}
\def\sS{{\mathscr S}}
\def\op{\mbox{\tiny\rm op}}
\def\vu{\vec u}
\def\vv{\vec v}
\def\vw{\vec w}
\def\vx{\vec x}
\def\vA{\vec A}
\def\vC{\vec C}
\def\wtH{\wt{H}}
\def\vone{\vec 1}
\def\vzero{\vec 0}
\def\bsig{{\boldsymbol {\sigma}}}
\def\vh{{\vec h}}
\def\vlam{{\vec \lambda}}
\def\valpha{{\vec \alpha}}
\def\va{{\vec a}}
\def\vb{{\vec b}}
\def\brho{{\boldsymbol \rho}}
\def\by{{\boldsymbol y}}
\def\vy{{\vec y}}
\def\naturals{{\mathbb N}}
\def\reals{{\mathbb R}}
\def\normal{{\sf N}}
\def\sT{{\sf T}}
\def\bv{{\boldsymbol{v}}}
\def\bz{{\boldsymbol{z}}}
\def\bx{{\boldsymbol{x}}}
\def\bA{\boldsymbol{A}}
\def\bT{\boldsymbol{T}}
\def\bm{\boldsymbol{m}}
\def\va{\vec{a}}
\def\vb{\vec{b}}
\def\ons{\mathbf{ons}}
\def\ONS{\mathbf{ONS}}
\def\de{{\rm d}}
\def\bY{\boldsymbol{Y}}
\def\bW{\boldsymbol{W}}
\def\<{\langle}
\def\>{\rangle}
\def\diag{{\rm diag}}
\def\ed{\stackrel{{\rm d}}{=}}
\def\cN{{\cal N}}
\def\cV{{\cal V}}
\def\cL{{\cal L}}
\def\by{{\boldsymbol{y}}}
\def\bw{{\boldsymbol{w}}}
\def\P{\mathbb{P}}
\def\be{{\boldsymbol{e}}}
\def\blambda{{\boldsymbol{\lambda}}}
\def\bD{{\boldsymbol{D}}}
\def\bu{{\boldsymbol{u}}}
\def\b0{{\boldsymbol{0}}}
\def\bfone{{\boldsymbol 1}}
\def\bG{{\boldsymbol G}}
\DeclareMathOperator*{\plim}{p-lim}
\def\OPT{{\sf OPT}}
\def\cA{{\mathcal A}}
\def\cI{{\mathcal I}}
\def\cS{{\mathcal S}}
\def\bn{{\boldsymbol n}}
\def\cB{{\mathcal B}}
\def\bzero{\boldsymbol{0}}
\def\ul{\underline{\ell}}
\def\ol{\overline{\ell}}
\newcommand{\f}{\boldsymbol{f}}
\renewcommand{\b}{\mathbf{b}}
\def\fr{\frac}
\def\lt{\left}
\def\rt{\right}
\def\la{\langle}
\def\ra{\rangle}
\def\eps{\varepsilon}
\def\bbA{{\mathbb{A}}}
\def\bbE{{\mathbb{E}}}
\def\bbP{{\mathbb{P}}}
\def\bbR{{\mathbb{R}}}
\def\bbW{{\mathbb{W}}}
\def\bbZ{{\mathbb{Z}}}
\def\cA{{\mathcal{A}}}
\def\cB{{\mathcal{B}}}
\def\cF{{\mathcal{F}}}
\def\cN{{\mathcal{N}}}
\def\cP{{\mathcal{P}}}
\def\sH{{\mathscr{H}}}
\def\bg{{\mathbf{g}}}
\def\bh{{\boldsymbol{h}}}
\def\bq{{\mathbf{q}}}
\def\bP{\mathbf{P}}
\def\bQ{\mathbf{Q}}
\def\ALG{{\mathsf{ALG}}}
\def\OPT{{\mathsf{OPT}}}
\DeclareMathOperator*{\E}{\bbE}
\newcommand{\Gp}[1]{\mathbf{G}^{(#1)}}
\newcommand{\norm}[1]{{\lt\|#1\rt\|}}
\newcommand{\tnorm}[1]{{\|#1\|}}
\newcommand{\diff}[1]{{\mathrm{d}#1}}
\newcommand{\deriv}[1]{{\fr{\diff{}}{\diff{#1}}}}
\def\vR{\vec R}
\def\ons{\mathbf{ons}}
\def\ONS{\mathbf{ONS}}
\def\ZZ{{\boldsymbol{Z}}}
\def\W{{\boldsymbol{W}}}
\def\A{{\boldsymbol{A}}}
\def\hZZ{\widehat{\boldsymbol{Z}}}
\def\AMP{{\sf AMP}}
\def\LAMP{{\sf LAMP}}
\def\qq{{\boldsymbol{q}}}
\def\VV{{\boldsymbol{V}}}
\def\sph{{\mathrm{sp}}}
\def\rd{{\mathrm{rad}}}
\def\spn{{\rm span}}
\def\hbA{\wh{\boldsymbol A}}
\def\tbA{\wt{\boldsymbol A}}
\def\hbz{\hat{\boldsymbol z}}
\def\bfone{{\boldsymbol 1}}
\def\bfzero{\boldsymbol{0}}
\def\vDelta{{\vec\Delta}}
\title{Optimization Algorithms for Multi-Species Spherical Spin Glasses}
\author{
    Brice Huang\thanks{Department of Electrical Engineering and Computer Science, Massachusetts Institute of Technology. Email: \texttt{bmhuang@mit.edu}.} 
    \and 
    Mark Sellke\thanks{Department of Statistics, Harvard University.
    Email: \texttt{msellke@fas.harvard.edu}.}
}
\date{}
\begin{document}

\maketitle

\begin{abstract}
    This paper develops approximate message passing algorithms to optimize multi-species spherical spin glasses. We first show how to efficiently achieve the algorithmic threshold energy identified in our companion work \cite{huang2023algorithmic}, thus confirming that the Lipschitz hardness result proved therein is tight.
    Next we give two generalized algorithms which produce multiple outputs and show all of them are approximate critical points. 
    Namely, in an $r$-species model we construct $2^r$ approximate critical points when the external field is stronger than a ``topological trivialization" phase boundary, and exponentially many such points in the complementary regime.
    We also compute the local behavior of the Hamiltonian around each. 
    These extensions are relevant for another companion work \cite{huang2023strong} on topological trivialization of the landscape. 
    % Finally we describe the bulk spectrum of the Riemannian Hessian around our algorithmic outputs at weak external field, which yields connections to $E_{\infty}$ type thresholds defined via annealed critical point complexity.
\end{abstract}
\setcounter{tocdepth}{2}
\tableofcontents
\newpage

\section{Introduction}
\label{sec:intro}

This paper studies the efficient optimization of a family of random non-convex functions $H_N$ defined on high-dimensional spaces, namely the Hamiltonians of multi-species spherical spin glasses. 
Mean-field spin glasses have been studied since \cite{sherrington1975solvable} as models for disordered magnetic systems and are also closely linked to random combinatorial optimization problems \cite{krzakala2007gibbs, dembo2017extremal, panchenko2018k}. In short, their Hamiltonians are certain polynomials in many variables with independent centered Gaussian coefficients.

The purpose of this work is to develop efficient algorithms to optimize $H_N$.
Our companion work \cite{huang2023algorithmic} derives an \emph{algorithmic threshold} $\ALG$ and proves no optimization algorithm with suitably Lipschitz dependence on $H_N$ can achieve energy better than $\ALG$ with more than exponentially small probability.
The value $\ALG$ is expressed as the maximum of a variational principle over several increasing functions, which was shown to be achieved by joining the solutions to a pair of well-posed differential equations.
The first main contribution of this paper is to show that given a solution to this variational problem, so-called approximate message passing (AMP) algorithms efficiently achieve the value $\ALG$.
We note that several previous works \cite{subag2018following,mon18,ams20,sellke2021optimizing} have given similar algorithms for mean-field spin glasses with $1$ species, and our algorithm is in line with the latter three.

Furthermore, we use these AMP algorithms to aid a detailed study of the landscape of $H_N$ by probing neighborhoods of special critical points.
This is related to a second companion work \cite{huang2023strong} which identifies the phase boundary for \emph{topological trivialization} of $H_N$, where the number of critical points is a constant independent of $N$. 
Therein, Kac-Rice estimates are used to show that for $r$-species models (defined on a product of $r$ spheres) in the ``super-solvable'' regime with strong external field, $H_N$ has exactly $2^r$ critical points with high probability.
In this paper, we give a signed AMP algorithm which explicitly approximates each of these critical points.
Moreover in the complementary ``sub-solvable'' regime, we use AMP to construct $\exp(cN)$ separated approximate critical points with high probability. 
This implies the failure of \emph{strong topological trivialization} as defined in \cite{huang2023strong}, which is proved therein to hold for super-solvable models.
Finally, the machinery of AMP allows us to compute the local behavior of $H_N$ around these algorithmic outputs, giving even more precise information about the landscape. 

\subsection{Problem Description}

Fix a finite set $\sS = \{1,\ldots,r\}$. 
For each positive integer $N$, fix a deterministic partition $\{1,\ldots,N\} = \sqcup_{s\in\sS}\, \cI_s$ with $\lim_{N\to\infty} |\cI_s| / N =\lambda_s$ where $\vlam = (\lambda_1,\ldots,\lambda_r) \in \bbR_{>0}^\sS$.
For $s\in \sS$ and $\bx \in \bbR^N$, let $\bx_s \in \bbR^{\cI_s}$ denote the restriction of $\bx$ to coordinates $\cI_s$.
We consider the state space 
\begin{equation}
\label{eq:def-BN}
    \cB_N = \lt\{
        \bx \in \bbR^N : 
        \norm{\bx_s}_2^2 \le \lambda_s N
        \quad\forall~s\in \sS
    \rt\}.
\end{equation}
Fix $\vh = (h_1,\ldots,h_r) \in \bbR_{\ge 0}^\sS$ and let $\bone = (1,\ldots,1) \in \bbR^N$.
For each $k\ge 2$ fix a symmetric tensor $\Gamma^{(k)} = (\gamma_{s_1,\ldots,s_k})_{s_1,\ldots,s_k\in \sS} \in (\bbR_{\ge 0}^{\sS})^{\otimes k}$ with $\sum_{k\ge 2} 2^k \norm{\Gamma^{(k)}}_\infty < \infty$, and let $\Gp{k} \in (\bbR^N)^{\otimes k}$ be a tensor with i.i.d. standard Gaussian entries.

For $A\in (\bbR^\sS)^{\otimes k}$, $B\in (\bbR^N)^{\otimes k}$, define $A\diamond B \in (\bbR^N)^{\otimes k}$ to be the tensor with entries
\begin{equation}
    \label{eq:def-diamond}
    (A\diamond B)_{i_1,\ldots,i_k} = A_{s(i_1),\ldots,s(i_k)} B_{i_1,\ldots,i_k},
\end{equation}
where $s(i)$ denotes the $s\in \sS$ such that $i\in \cI_s$.
Let $\bh = \vh \diamond \bone$.
We consider the mean-field multi-species spin glass Hamiltonian
\begin{align}
    \label{eq:def-hamiltonian}
    H_N(\bsig) &= \la \bh, \bsig \ra + \wtH_N(\bsig), \quad \text{where}\\
    \label{eq:def-hamiltonian-no-field}
    \wtH_N(\bsig) &=
	\sum_{k\ge 2}
	\fr{1}{N^{(k-1)/2}}
	\la \Gamma^{(k)} \diamond \bG^{(k)}, \bsig^{\otimes k} \ra \\
	\notag
	&=
	\sum_{k\ge 2}
	\fr{1}{N^{(k-1)/2}}
	\sum_{i_1,\ldots,i_k=1}^N 
	\gamma_{s(i_1),\ldots,s(i_k)} \bG^{(k)}_{i_1,\ldots,i_k} \sigma_{i_1}\cdots \sigma_{i_k}
\end{align}
with inputs $\bsig = (\sigma_1,\ldots,\sigma_N) \in \cB_N$.
\revedit{For example, the choice of parameters $\Gamma^{(2)} = (\begin{smallmatrix} 0 & 1 \\ 1 & 0 \end{smallmatrix})$ and $\Gamma^{(k)}=0$ for $k\ge 3$ is the well-known bipartite spherical SK model \cite{auffinger2014free}.}
For $\bsig,\brho\in \cB_N$, define the species $s$ overlap and overlap vector
\begin{equation}
    \label{eq:R}
    R_s(\bsig, \brho)
    =
    \fr{ \la \bsig_s, \brho_s \ra}{\lambda_s N},
    \qquad
    \vR(\bsig, \brho) 
    = 
    \lt(R_1(\bsig, \brho), \ldots, R_r(\bsig, \brho)\rt).
\end{equation}
Let $\odot$ denote coordinate-wise product. 
For $\vx = (x_1,\ldots,x_r) \in \bbR^\sS$, let 
\begin{align*}
    \xi(\vx) 
    &= \sum_{k\ge 2} \la \Gamma^{(k)}\odot \Gamma^{(k)}, (\vlam \odot \vx)^{\otimes k}\ra \\
    &= \sum_{k\ge 2}
	\sum_{s_1\ldots,s_k\in \sS}
	\gamma_{s_1,\ldots,s_k}^2
	(\lambda_{s_1} x_{s_1})
	\cdots
	(\lambda_{s_k} x_{s_k}).
\end{align*}
The random function $\wtH_N$ can also be described as the Gaussian process on $\cB_N$ with covariance
\[
	\bbE \wtH(\bsig)\wtH(\brho)
	=
	N\xi(\vR(\bsig, \brho)).
\]
\revedit{
We will also often refer to the product of spheres 
\begin{equation}
\label{eq:def-SN}
\cS_N=\big\{\bu\in\bbR^N~:~\|\bu_s\|^2=\lambda_s N~~\forall~s\in \sS\big\}.
\end{equation}
}
It will be useful to define, for $s\in \sS$,
\[
    \xi^s(\vx) 
    = 
    \lambda_s^{-1} 
    \partial_{x_s} 
    \xi(\vx).
\]

\subsection{The Value $\ALG$}

\revedit{
Given $(\vlam,\xi)$, the ground state energy of the associated multi-species spherical spin glass is\footnote{Technically the $N\to\infty$ limit is not known to exist for general $\xi$. Since $\OPT$ appears in the present paper only in this informal discussion, we will not belabor this point.}
\[
\OPT=\OPT(\xi)
=
\plim_{N\to\infty}
\sup_{\bsig\in\cB_N} H_N(\bsig)/N.
\]
In the bipartite SK model mentioned above, $\OPT$ is the limiting operator norm of an IID Gaussian rectangular matrix with aspect ratio $\lambda_1/\lambda_2$. 
For large $k$, the asymptotic operator norm of an IID random $k$-tensor is similarly encoded as $\OPT(\xi)$ for some $\xi$ (with e.g. $r=k$).
Perhaps surprisingly, it is generally believed that polynomial-time algorithms are not in general capable of finding $\bsig\in\cB_N$ such that $H_N(\bsig)\geq \OPT(\xi)-\eps$ with high probability as $N\to\infty$.
Our work \cite{huang2021tight} showed that in the single species case (and with all terms of even degree), one can identify an exact threshold $\ALG$ for the performance of a class of \emph{Lipschitz} algorithms which includes gradient-based methods and Langevin dynamics. 
More recently in \cite{huang2023algorithmic}, we extended the algorithmic hardness direction of this result to multi-species spherical spin glasses, using a new proof technique that applies even when $\OPT$ is not known.
The purpose of this paper is to give explicit algorithms attaining the value $\ALG$, and we present here the formula for this value.
}

The algorithmic threshold $\ALG$ is given by the following variational principle. 
This is a simplification of the more general variational formula \cite[Equation (1.7)]{huang2023algorithmic}, obtained by a partial characterization of its maximizers \cite[Theorem 3]{huang2023algorithmic}.
\revedit{The following generic assumption is needed therein to ensure well-posedness of the ODE \eqref{eq:tree-descending-ode} used in this description, and we will freely assume it throughout the paper.}
% The description below requires in some parts the following generic assumption, which we will freely assume throughout this paper. 
\begin{assumption}
    \label{as:nondegenerate}
    All quadratic and cubic interactions participate in $H$, i.e. $\Gamma^{(2)}, \Gamma^{(3)} > 0$ coordinate-wise.
    We will call such models \textbf{non-degenerate}.
    \revedit{Since this condition depends only on $\xi$, we similarly call $\xi$ non-degenerate.}
\end{assumption}

To optimize $H_N$ for degenerate $\xi$, it suffices to apply our algorithms to a slight perturbation $\wt\xi$ which is non-degenerate and satisfies $\|\xi-\wt\xi\|_{C^3([0,1]^r)}\leq \eps$ to obtain the guarantees in this and the next section. 
\revedit{Here, $C^3([0,1]^r)$ denotes the norm
% \bhcomment{is this what you intended?}
% yep, thanks!
\[
    \|\xi\|_{C^3([0,1]^r)} 
    = \sup_{\vx \in [0,1]^r} \max \lt\{
        |\xi(\vx)|, \|\nabla \xi(\vx)\|_\infty, \|\nabla^2 \xi(\vx)\|_\infty, \|\nabla^3 \xi(\vx)\|_\infty
    \rt\}.
\]}
Since both the ground state and the more general $\ALG$ formula in \cite{huang2023algorithmic} (allowing degenerate $\xi$) vary continuously in $\xi$, there is essentially no loss of generality in assuming non-degeneracy.

The formula for $\ALG$ is described by two cases depending on whether $\vone=1^{\sS}$ is super-solvable as defined below.

\begin{definition}
    \label{defn:diag-signed}
    A matrix $M\in \bbR^{\sS \times \sS}$ is \textbf{diagonally signed} if $M_{i,i}\ge 0$ and $M_{i,j}<0$ for all $i\neq j$.
\end{definition}
\begin{definition}
    \label{defn:solvable}
    A symmetric diagonally signed matrix $M$ is \textbf{super-solvable} if it is positive semidefinite, and \textbf{solvable} if it is furthermore singular; otherwise $M$ is \textbf{strictly sub-solvable}.
    A point $\vx \in (0,1]^\sS$ is super-solvable, solvable, or strictly sub-solvable if $M^*(\vx)$ is, where
    \begin{equation}
        \label{eq:M*sym}
        M^*(\vx) = 
        \diag\lt(\lt(\fr{\partial_{x_s}\xi(\vx) + \lambda_s h_s^2}{x_s}\rt)_{s\in \sS}\rt) 
        - \lt(\partial_{x_s,x_{s'}}\xi(\vx)\rt)_{s,s'\in \sS}
        .
    \end{equation}
    We also adopt the convention that $\vzero$ is always super-solvable, and solvable if $\vh=\vzero$. 
\end{definition}

The following will be useful.

\begin{proposition}[{\cite[Proposition 4.3]{huang2023algorithmic}, see also \cite[Lemma 2.5]{huang2023strong}}]
\label{prop:diagonally-signed-min-max}
    If the square matrix $M$ is diagonally signed, then the minimal eigenvalue $\blambda_{\min}(M)$ has multiplicity $1$, and the corresponding eigenvector $\vv$ has strictly positive entries. 
    Moreover
    \[
        \blambda_{\min}(M) 
        = \sup_{\vv \succ \vzero}
        \min_{s\in\sS}
        \fr{(M\vv)_s}{v_s}\,,
    \]
    and the supremum is uniquely attained at $\vv$.
\end{proposition}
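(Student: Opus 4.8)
The plan is to prove the three claims in order: simplicity of $\blambda_{\min}(M)$ with a positive eigenvector, the min-max (Collatz--Wielandt type) formula, and uniqueness of the optimizer. The key observation is that $M$ being diagonally signed means that $M = cI - P$ where $c = \max_s M_{s,s}$ and $P$ has nonnegative off-diagonal entries and nonnegative diagonal entries; more usefully, for $t$ large enough the matrix $tI - M$ has \emph{strictly} positive off-diagonal entries (since $M_{i,j} < 0$ for $i \neq j$) and nonnegative diagonal, hence $N := tI - M$ is a nonnegative matrix whose associated digraph is complete, so $N$ is irreducible (in fact primitive). The eigenvalues of $N$ are $t - \lambda$ as $\lambda$ ranges over eigenvalues of $M$, and the Perron eigenvalue of $N$ is $t - \blambda_{\min}(M)$. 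So all three assertions will follow by translating the Perron--Frobenius theorem for irreducible nonnegative matrices through this shift.

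First I would spell out the reduction: fix $t > \max_s M_{s,s} + \sum_{s' \neq s}|M_{s,s'}|$ (or any $t$ making $N = tI - M$ entrywise nonnegative; since off-diagonals of $M$ are strictly negative, off-diagonals of $N$ are strictly positive regardless). Then $N \ge 0$ is irreducible. By Perron--Frobenius, its spectral radius $\rho(N)$ is a simple eigenvalue with a strictly positive eigenvector $\vv \succ \vzero$, and no other eigenvalue has a nonnegative eigenvector. Translating back, $\blambda_{\min}(M) = t - \rho(N)$ is a simple eigenvalue of $M$ with the same strictly positive eigenvector $\vv$, proving the first claim.

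For the min-max formula, I would invoke the Collatz--Wielandt characterization of the Perron root: $\rho(N) = \sup_{\vv \succ \vzero} \min_s (N\vv)_s / v_s$, with the supremum attained exactly at the Perron eigenvector (up to scaling). Substituting $N = tI - M$ gives, for any $\vv \succ \vzero$,
\[
    \min_s \fr{(N\vv)_s}{v_s}
    = \min_s \lt(t - \fr{(M\vv)_s}{v_s}\rt)
    = t - \max_s \fr{(M\vv)_s}{v_s},
\]
so $\rho(N) = t - \inf_{\vv \succ \vzero} \max_s (M\vv)_s/v_s$, i.e. $\blambda_{\min}(M) = \sup_{\vv \succ \vzero} \min_s (M\vv)_s / v_s$. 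Wait---the signs need care: from $\rho(N) = \sup_{\vv} \min_s (N\vv)_s/v_s = \sup_{\vv}(t - \max_s (M\vv)_s/v_s) = t - \inf_{\vv} \max_s (M\vv)_s/v_s$, hence $\blambda_{\min}(M) = t - \rho(N) = \inf_{\vv \succ \vzero}\max_s (M\vv)_s/v_s$. To get the stated $\sup$-$\min$ form, I would instead use the dual Collatz--Wielandt identity $\rho(N) = \inf_{\vv \succ \vzero}\max_s (N\vv)_s/v_s$, which after the same substitution yields $\blambda_{\min}(M) = \sup_{\vv \succ \vzero}\min_s (M\vv)_s/v_s$ directly, with the infimum for $N$ (hence the supremum for $M$) attained uniquely at the Perron eigenvector $\vv$ by the equality case in Collatz--Wielandt for irreducible matrices. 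That gives both the formula and the uniqueness of the maximizer.

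The main obstacle---really the only nontrivial point---is justifying the equality case: that the supremum over $\vv \succ \vzero$ in the min-max formula is attained \emph{only} at the Perron eigenvector (and not merely attained). For irreducible $N$ this is standard but does require the irreducibility (for reducible $N$ the optimizer can fail to be unique or even fail to be positive), which is exactly why the \emph{strict} negativity of the off-diagonal entries of $M$ in Definition~\ref{defn:diag-signed} is essential rather than just $M_{i,j}\le 0$. I would present the equality-case argument directly: if $\vw \succ \vzero$ achieves $\min_s (N\vw)_s/w_s = \rho(N)$, then $N\vw \ge \rho(N)\vw$ coordinatewise; pairing with the strictly positive left Perron eigenvector $\vu^\top$ of $N$ gives $\rho(N)\,\vu^\top \vw = \vu^\top N \vw \ge \rho(N)\, \vu^\top\vw$ with equality, forcing $N\vw = \rho(N)\vw$, so $\vw$ is a right Perron eigenvector and hence a scalar multiple of $\vv$ by simplicity. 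Everything else is bookkeeping with the shift $t$.
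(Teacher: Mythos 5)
The paper itself does not prove this proposition; it is imported verbatim from the companion works (\cite[Proposition 4.3]{huang2023algorithmic}, \cite[Lemma 2.5]{huang2023strong}), so there is no in-paper argument to compare against. Your proposal is correct and is the natural route: shift to $N=tI-M$, which is entrywise nonnegative with strictly positive off-diagonal entries (this is exactly where the strict inequality $M_{i,j}<0$ of Definition~\ref{defn:diag-signed} is used, as you note), hence irreducible, and then translate Perron--Frobenius and the Collatz--Wielandt characterization back through the shift. You also correctly work out that the $\sup$-$\min$ formula for $M$ corresponds to the $\inf$-$\max$ (dual) Collatz--Wielandt formula for $N$. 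Two small points. First, there is a direction slip in your equality-case paragraph: a maximizer $\vw\succ\vzero$ of $\min_s (M\vw)_s/w_s$ satisfies $M\vw\succeq \blambda_{\min}(M)\vw$, which translates to $N\vw\preceq\rho(N)\vw$, not $N\vw\succeq\rho(N)\vw$ as written; the pairing against the strictly positive left Perron eigenvector forces equality exactly as you describe, just with the inequality reversed. Second, since the proposition is stated for a general (not necessarily symmetric) diagonally signed $M$, one should add a sentence saying that $\blambda_{\min}(M)$ is to be read as $t-\rho(N)$, i.e.\ the eigenvalue of minimal real part, which Perron--Frobenius guarantees is real and simple; in the paper's applications $M=M^*(\vx)$ is symmetric, so this is only a matter of phrasing. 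Neither point is a genuine gap.
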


It is easy to see that any $x\in (0,1]^\sS$ is sub-solvable when $\vh=\vzero$, and that super-solvability is a coordinate-wise increasing property of $\vh$.
For our purposes, an external field is large if $\vone$ is super-solvable and small if $\vone$ is strictly sub-solvable.
\revedit{
(Unfortunately we do not have more refined intuition for the precise form of $M^*$ above, nor the resulting phase boundary between super and sub-solvability.)
As shown in our companion work \cite{huang2023strong}, in super-solvable models the external fields $\bh$ are strong enough to trivialize the ``glassy'' nature of the landscape for $H_N$. Namely the number of critical points is exactly $2^r$ with high probability, the minimum number of any generic smooth (``Morse'') function on a product of $r$ spheres. By contrast in the sub-solvable case, the expected number of critical points is exponentially large in the dimension $N$. As explained below, the optimization algorithms are also simpler in the super-solvable case.
}

\begin{definition}[Algorithmic Threshold, Super-Solvable Case]
    If $\vone$ is super-solvable, then 
    \[
    \ALG
    =
    \sum_{s\in \sS}
    \lambda_s 
    \sqrt{\xi^s(\vone) + h_s^2}
    \,
    .
    \]
\end{definition}

When $\vone$ is strictly sub-solvable, the formula for $\ALG$ becomes more complicated and depends on the optimal choice of a increasing $C^2$ function $\Phi:[q_1,1]\to [0,1]^{\sS}$ satisfying certain conditions.
We term such $\Phi$ \emph{pseudo-maximizers} and defer the formal definition to Definition~\ref{def:pseudo-maximizer}. 
Note that $q_1 \in [0,1]$ is not fixed, but is determined by the choice of $\Phi$.
\begin{definition}[Algorithmic Threshold, Strictly Sub-solvable Case]
    \label{def:subsolvable-alg}
    If $\vone$ is strictly sub-solvable, then with the maximum taken over all pseudo-maximizers $\Phi$ of $\bbA$,
    \begin{equation}
    \label{eq:alg-for-optimizer}
    \begin{aligned}
    \ALG
    &=
    \max_{\Phi}
    \bbA(\Phi);
    \\
    \bbA(\Phi)
    &\equiv
    \sum_{s\in \sS}
    \lambda_s \lt[
        \sqrt{\Phi_s(q_1) (\xi^s(\Phi(q_1)) + h_s^2)}  + 
        \int_{q_1}^1 \sqrt{\Phi'_s(q)(\xi^s\circ \Phi)'(q)}~\de q
    \rt]
    \,
    .
    \end{aligned}
    \end{equation}
\end{definition}

\revedit{See \cite[Remark 1.3]{huang2023algorithmic} for an approach to maximizing $\bbA$ using the well-posedness of the ODEs \eqref{eq:Phip-q1}, \eqref{eq:tree-descending-ode} in the definition of pseudo-maximizer. The computational complexity of this task is in particular independent of $N$.}

The following theorem is our main result. 
We equip the space $\sH_N$ of Hamiltonians $H_N$ with the following distance. 
We identify $H_N$ with its disorder coefficients $(\bG^{(k)})_{k\ge 2}$, which we \revedit{arrange} in an arbitrary but fixed order into an infinite vector $\bg(H_N)$, and define
\[
    \norm{H_N-H'_N}_2 = \norm{\bg(H_N) - \revedit{\bg}(H'_N)}_2.
\]
\revedit{(In other words, $\norm{H_N-H'_N}_2^2$ is the sum of squared differences $(g_{i_1,\dots,i_k}-g'_{i_1,\dots,i_k})^2$ between all corresponding pairs of coefficients in $(\bG^{(k)})_{k\ge 2}$ and $(\bG'^{(k)})_{k\ge 2}$.)}
We say an algorithm $\cA_N : \sH_N \to \cB_N$ is $\tau$-Lipschitz if 
\[
    \norm{\cA_N(H_N) - \cA_N(H'_N)}_2 \le \tau 
    \norm{H_N - H'_N}_2, \qquad 
    \forall H_N, H'_N \in \sH_N.
\]
Note that $\norm{H_N-H'_N}_2$ may be infinite, and if so this condition holds vacuously for such pairs $(H_N,H'_N)$.
Here and throughout, all implicit constants may depend also on $(\xi,\vh,\vlam)$.

\begin{theorem}
\label{thm:main-alg}
For any $\eps>0$, there exists an $O_{\eps}(1)$-Lipschitz $\cA_N:\sH_N\to \cB_N$ such that
\[
    \bbP[H_N(\cA_N(H_N))/N \geq \ALG-\eps]
    \ge 1-\exp(-cN),
    \quad 
    c = c(\eps) > 0.
\] 
\end{theorem}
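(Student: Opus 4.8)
The plan is to construct the algorithm $\cA_N$ as an approximate message passing (AMP) iteration that mimics a continuous-time ``descent along a path'' in the space of overlap vectors, following the strategy of \cite{sellke2021optimizing} adapted to the multi-species setting. The key object is the pseudo-maximizer $\Phi:[q_1,1]\to[0,1]^\sS$ (or in the super-solvable case the degenerate path that jumps immediately to $\vone$): the idea is that a near-optimal $\bsig\in\cS_N$ can be produced as (roughly) a sum $\bsig \approx \bsig^{(q_1)} + \sum_{j} \Delta\bsig^{(j)}$ of nearly-orthogonal increments, where the increment added at ``time'' $q$ is aligned, species by species, with the gradient direction of the Hamiltonian restricted to the sphere determined by the current iterate. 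Concretely one discretizes $[q_1,1]$ into $K=K(\eps)$ steps, runs an AMP recursion whose state vectors track both the iterate and the relevant gradients $\nabla H_N$, and chooses the per-step increments so that the species-$s$ overlap profile of the output follows $\Phi_s$. The energy gained is then, to leading order, exactly the Riemann sum approximating $\bbA(\Phi)$ in \eqref{eq:alg-for-optimizer}, namely $\sum_s \lambda_s[\sqrt{\Phi_s(q_1)(\xi^s(\Phi(q_1))+h_s^2)} + \sum_j \sqrt{\Delta\Phi_s^{(j)}\,\Delta(\xi^s\circ\Phi)^{(j)}}]$, so taking $K$ large and $\Phi$ within $\eps$ of the maximizer yields energy $\geq \ALG-\eps$.

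\textbf{The steps, in order.} First, set up the AMP iteration and state the multi-species state evolution: the empirical distribution of the state vectors converges (in the $N\to\infty$ limit, with exponentially good concentration) to an explicit Gaussian whose covariance is governed by $\xi$ and the chosen overlap increments; this is the workhorse and should follow from a multi-species version of standard AMP concentration results, using non-degeneracy (Assumption~\ref{as:nondegenerate}) to control the relevant matrices. Second, verify the ``overlap bookkeeping'': show that with the increments dictated by $\Phi$, the iterate $\bm^{(k)}$ after $k$ steps lies (approximately) on the correct product-of-shells and has species overlaps tracking $\Phi(q_k)$, and in particular the final iterate lies approximately on $\cS_N$ (or can be projected there at negligible energy cost via \eqref{eq:def-SN}/\eqref{eq:def-BN}). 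Third, compute the energy: expand $H_N(\bsig^{(K)})$ as a telescoping sum over increments, use the state evolution to identify each term's limit, and recognize the total as the Riemann sum for $\bbA(\Phi)$; here the pseudo-maximizer conditions (the ODEs \eqref{eq:Phip-q1}, \eqref{eq:tree-descending-ode} referenced in Definition~\ref{def:pseudo-maximizer}) are what guarantee the increments are feasible, i.e. the quantities under the square roots are nonnegative and the path stays in $[0,1]^\sS$. Fourth, handle the super-solvable case separately as the simpler ``one-step'' instance where the path goes directly to $\vone$, recovering $\ALG = \sum_s\lambda_s\sqrt{\xi^s(\vone)+h_s^2}$. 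Fifth, establish the Lipschitz bound: each AMP step is a fixed polynomial (in $H_N$'s coefficients) composed with Lipschitz nonlinearities restricted to a high-probability bounded region, so a truncation/smoothing argument (modify $\cA_N$ outside a ball where state evolution concentrates, at the cost of an event of probability $\exp(-cN)$) gives an $O_\eps(1)$-Lipschitz map globally, with the Lipschitz constant depending on $K(\eps)$ but not $N$.

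\textbf{The main obstacle} I expect is twofold and specific to the \emph{strictly sub-solvable} case: (i) proving that the discretized AMP increments remain feasible and that the state-evolution covariance stays positive-definite all along the path — this is exactly where the structure of $M^*(\vx)$ and the pseudo-maximizer ODEs must be used, and it is delicate because the path $\Phi$ may approach the boundary of $[0,1]^\sS$ or the solvability locus; and (ii) controlling the approximation errors uniformly as $K\to\infty$ — the number of AMP steps grows with $1/\eps$, each step introduces $o_N(1)$ errors, and one must ensure these do not accumulate faster than the $\eps$-budget, which requires quantitative (exponential) concentration in the state evolution rather than mere convergence in probability. A secondary but real difficulty is the endpoint at $q_1$: the ``initial'' chunk $\bsig^{(q_1)}$ carries overlap $\Phi(q_1)$ with itself and must be generated with the right correlation structure to the external field $\bh$, contributing the $\sqrt{\Phi_s(q_1)(\xi^s(\Phi(q_1))+h_s^2)}$ term; getting this boundary term exactly right (as opposed to off by $O(\eps)$) interacts with how one initializes the AMP recursion.
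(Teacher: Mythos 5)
Your plan is essentially the paper's proof: a two-stage AMP (a root-finding phase converging to overlap $\Phi(q_1)$, then an incremental phase whose telescoping energy gain is the Riemann sum for the integral in $\bbA(\Phi)$), with the super-solvable case as the degenerate one-step instance and Lipschitzness obtained by modifying $\cA_N$ off the high-probability set of Proposition~\ref{prop:gradients-bounded}. One clarification on your stated obstacle (ii): no exponential concentration in state evolution is needed, because the step count $\ul$ is a constant depending only on $\eps$ and the limits are taken in the order $N\to\infty$ first, then $\ul\to\infty$; the exponential tail in the theorem comes entirely from the final Lipschitz-modification-plus-Gaussian-concentration step, not from the AMP analysis. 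The genuinely new multi-species ingredient you would still need to supply is the analogue of Lemma~\ref{lem:Rj-to-Phiq1}: the overlap recursion $\vR^{k+1}=\valpha(\vR^k)$ of the root phase converges to $\Phi(q_1)$ (rather than a smaller fixed point) precisely because $\Phi(q_1)$ is super-solvable, via the diagonally-signed matrix $M^*(\Phi(q_1))$ and Proposition~\ref{prop:diagonally-signed-min-max}.
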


The main result in our companion work \cite[Theorem 1]{huang2023algorithmic} states that any $\tau$-Lipschitz $\cA_N : \sH_N \to \cB_N$ satisfies, for the same threshold $\ALG$ and $N$ sufficiently large,
\[
    \bbP[H_N(\cA_N(H_N))/N \ge \ALG + \eps]
    \le 
    \exp(-cN),
    \quad
    c = c(\eps,\tau) > 0.
\]
Together these results thus characterize the best possible Lipschitz optimization algorithms for multi-species spherical spin glasses.

We prove Theorem~\ref{thm:main-alg} with an explicit algorithm based on approximate message passing (AMP), following a recent line of work \cite{subag2018following,mon18,ams20,alaoui2022algorithmic,sellke2021optimizing}.
Such algorithms are shown to be Lipschitz (up to modification on a set with $\exp(-cN)$ probability) in \cite[Section 8]{huang2021tight}.
AMP algorithms also have computational complexity which is linear in the input size when $H_N$ is a polynomial of finite degree (modulo solving for $\Phi$, a task that does not depend on $N$). 
See \cite[Remark 2.1]{ams20} for related discussion on this last point.

Similarly to \cite{alaoui2022algorithmic,sellke2021optimizing}, our algorithm has two phases, a ``root-finding" phase and a ``tree-descending" phase. 
Roughly speaking, the set of points reachable by our algorithm has the geometry of a densely branching ultrametric tree, which is rooted at the origin when $\bh = \bzero$ and more generally at a random point correlated with $\bh$.
The first phase identifies this root, and the second traces a root-to-leaf path of this tree. 
The structure of the first phase is similar to the original AMP algorithm of \cite{bolthausen2014iterative} for the SK model at high-temperature, while the latter \emph{incremental} AMP technique was introduced in \cite{mon18}.

\revedit{For the purposes of this paper, the significance} of (super, sub)-solvability is as follows.
When the external field is sufficiently large, the root moves all the way to the boundary of $\cB_N$ \revedit{(in all $r$ species)} and the algorithmic tree becomes degenerate.
In \cite{huang2023algorithmic}, it is shown that the external field is large enough for this to occur if and only if $\vone$ is super-solvable. 
Moreover, \cite{huang2023strong} shows this condition coincides with strong topological trivialization (defined therein) of the optimization landscape.

In Section~\ref{sec:branch} we extend our main algorithm in several ways. 
In Subsection~\ref{subsec:signed} we define $2^r$ signed generalizations of the root-finding algorithm with similar behavior.
In Subsection~\ref{subsec:grad} we compute the gradients of $H_N$ at the points output by our algorithm, in both cases when $\vone$ is super-solvable and sub-solvable.
In particular, we show that they are approximate critical points on the product of spheres $\cS_N$ \revedit{(defined in \eqref{eq:def-SN})}.
As explained in Remark~\ref{rem:2-r-crits-relation}, in the strictly super-solvable case these $2^r$ outputs approximate the $2^r$ genuine critical points of $H_N$ on $\cS_N$.
The sub-solvable case of this computation is used in our companion paper \cite[Theorem 1.5(c) and Subsection 5.3]{huang2023strong} to show failure of \emph{annealed} topological trivialization in the sub-solvable case.
Finally in Subsection~\ref{subsec:branch} we give a modification of the tree-descending phase for the super-solvable case. It constructs $\exp(cN)$ well-separated approximate critical points arranged in a densely branching ultrametric tree; this implies the failure of \emph{strong} topological trivialization in \cite[Definition 6 and Theorem 1.6]{huang2023strong}.

\subsection{Notations}
\label{subsec:notation}

\revedit{Throughout, we will use boldface lowercase letters ($\bu,\bv,\ldots$) to denote vectors in $\bbR^N$, and lowercase letters with vector sign ($\vu,\vv,\ldots$) to denote vectors in $\bbR^\sS \simeq \bbR^r$.
Similarly, boldface uppercase letters denote matrices or tensors in $(\bbR^N)^{\otimes k}$, and non-boldface uppercase letters denote matrices or tensors in $(\bbR^r)^{\otimes k}$.}
We let 
\[
\< \bv\>_N=N^{-1}\sum_{i\le N} v_i;
\quad\quad
\< \bu,\bv\>_N
= 
N^{-1}\sum_{i\le N}u_iv_i
=
\langle\vec\lambda, \vR(\bu,\bv)\rangle
\]
for $\bu,\bv\in\reals^N$.
The corresponding norm is 
\[
\|\bu\|_{N}= \<\bu,\bu\>_N^{1/2}=\sqrt{\sum_s \lambda_s R_s(\bu,\bu)}.
\] 
Next $a_N\simeq b_N$ means that $a_N-b_N$ converges in probability to $0$. 
Analogously, for two vectors $\bu_N, \bv_N$, we write $\bu_N\simeq \bv_N$ when $\|\bu_N-\bv_N\|_N$ converges in probability to $0$. We denote limits in probability by $\plim_{N\to\infty}$.
Analogously we write $\approx_{\delta}$ to denote asymptotic equality as $\delta\to 0$.

% Let $S_N = \{\bx \in \bbR^N : \norm{\bx}_2^2 = N\}$.
For any tensor $\bA \in (\bbR^N)^{\otimes k}$, we define the operator norm
\[
    \tnorm{\bA}_{\op} = 
    \sup_{\|\bsig^1\|,\ldots,\|\bsig^k\| \leq 1} 
    \lt|\la \bA, \bsig^1 \otimes \cdots \otimes \bsig^k \ra\rt|.
\]
The following proposition shows that with exponentially good probability, the operator norms of all constant-order gradients of $H_N$ are bounded on the appropriate scale.
\begin{proposition}[{\cite[Proposition 1.13]{huang2023algorithmic}}]
\label{prop:gradients-bounded}
    For any fixed model $(\xi, \vh)$ there exists a constant $c>0$, sequence $(K_N)_{N\geq 1}$ of convex sets $K_N\subseteq \sH_N$, and sequence of constants $(C_{k})_{k\geq 1}$ independent of $N$, such that the following properties hold.
    \begin{enumerate}[label=(\alph*)]
        \item 
        \label{it:KN-high-prob}
        $\P[H_N\in K_N]\geq 1-e^{-cN}$;
        \item For all $H_N\in K_N$ and 
        $\bx\in \cB_N$,
        \begin{align}
            \label{eq:gradient-bounded}
            \norm{\nabla^k H_N(\bx)}_{\op}
            &\le 
            C_{k}N^{1-\frac{k}{2}}.
        \end{align}
    \end{enumerate}
\end{proposition}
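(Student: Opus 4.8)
The plan is to reduce the bound on every derivative tensor $\nabla^k H_N(\bx)$ to $\bx$-independent bounds on the operator norms of the disorder tensors $\Gamma^{(m)}\diamond\Gp{m}$, and then sum over the degree $m$.

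\emph{Step 1 (reduction to tensor norms).} Differentiating \eqref{eq:def-hamiltonian-no-field} $k$ times, only monomials of degree $m\ge k$ (and of course $m\ge 2$) survive, and the degree-$m$ part contributes to $\nabla^k H_N(\bx)$ a sum over the $m!/(m-k)!$ ways of assigning the $k$ output coordinates to $k$ of the $m$ tensor slots, each summand being $N^{-(m-1)/2}$ times the contraction of $\Gamma^{(m)}\diamond\Gp{m}$ with $\bx$ in the remaining $m-k$ slots; the field $\bh$ contributes only to $\nabla^1$. Since $\sum_{s\in\sS}\lambda_s=1$, every $\bx\in\cB_N$ satisfies $\norm{\bx}_2\le\sqrt N$, and since the operator norm is a supremum over products of unit balls, contracting one slot against $\bx$ costs a factor $\norm{\bx}_2\le\sqrt N$. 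Hence, using $m!/(m-k)!\le m^k$, for every $H_N\in\sH_N$, $k\ge1$, and $\bx\in\cB_N$,
\[
  \norm{\nabla^k H_N(\bx)}_{\op}
  \;\le\;
  \one_{k=1}\Big(\max_{s}h_s\Big)\sqrt N
  \;+\;
  \sum_{m\ge \max(2,k)} m^{k}\, N^{-(m-1)/2}\,
  \norm{\Gamma^{(m)}\diamond\Gp{m}}_{\op}\, N^{(m-k)/2}.
\]
The crucial point is that the right-hand side depends on the disorder only through the quantities $\big(\norm{\Gamma^{(m)}\diamond\Gp{m}}_{\op}\big)_{m\ge2}$, uniformly over $\bx\in\cB_N$, so no discretization of $\cB_N$ will be needed.

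\emph{Step 2 (tensor norm tail bounds).} For fixed $m$, the map $\bB\mapsto\norm{\Gamma^{(m)}\diamond\bB}_{\op}$ is $\norm{\Gamma^{(m)}}_\infty$-Lipschitz in the Euclidean norm on the entries of $\bB$, since $\norm{\Gamma^{(m)}\diamond\bB}_{\op}\le\norm{\Gamma^{(m)}\diamond\bB}_F\le\norm{\Gamma^{(m)}}_\infty\norm{\bB}_F$. For the expectation, the centered Gaussian field $(\bsig^1,\dots,\bsig^m)\mapsto\la\Gamma^{(m)}\diamond\Gp{m},\bsig^1\otimes\cdots\otimes\bsig^m\ra$ on the product of unit balls has squared increments bounded, entrywise and hence in $L^2$, by those of $\norm{\Gamma^{(m)}}_\infty\la\Gp{m},\bsig^1\otimes\cdots\otimes\bsig^m\ra$; by the Sudakov--Fernique inequality together with a standard $\eps$-net bound for the injective norm of a Gaussian $m$-tensor,
\[
  \E\norm{\Gamma^{(m)}\diamond\Gp{m}}_{\op}
  \;\le\;
  \norm{\Gamma^{(m)}}_\infty\,\E\norm{\Gp{m}}_{\op}
  \;\le\;
  C_0\,\norm{\Gamma^{(m)}}_\infty\sqrt{mN}
\]
for an absolute constant $C_0$ (any $\mathrm{poly}(m)$ factor in place of $\sqrt m$ would also be acceptable below). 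Gaussian concentration of Lipschitz functions then gives, for every $m\ge2$,
\[
  \P\Big[\norm{\Gamma^{(m)}\diamond\Gp{m}}_{\op}\ge(C_0+1)\norm{\Gamma^{(m)}}_\infty\sqrt{mN}\Big]
  \;\le\;
  e^{-mN/2}.
\]

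\emph{Step 3 (assembly).} Define $K_N\subseteq\sH_N$ to be the set on which $\norm{\Gamma^{(m)}\diamond\Gp{m}}_{\op}\le(C_0+1)\norm{\Gamma^{(m)}}_\infty\sqrt{mN}$ for every $m\ge2$. By a union bound $\P[H_N\notin K_N]\le\sum_{m\ge2}e^{-mN/2}\le e^{-cN}$ for a suitable $c>0$, which gives \ref{it:KN-high-prob}. The set $K_N$ is convex because each defining constraint is a sublevel set of $\bg(H_N)\mapsto\norm{\Gamma^{(m)}\diamond\Gp{m}}_{\op}$, i.e.\ of a norm composed with a linear map of the disorder coefficients, and an intersection of convex sets is convex. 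Finally, on $K_N$ the inequality of Step~1 becomes, for all $\bx\in\cB_N$,
\[
  \norm{\nabla^k H_N(\bx)}_{\op}
  \;\le\;
  N^{1-k/2}\left(\one_{k=1}\max_s h_s
  +(C_0+1)\sum_{m\ge \max(2,k)}m^{\,k+1/2}\norm{\Gamma^{(m)}}_\infty\right)
  \;=:\;C_k\,N^{1-k/2},
\]
since the exponents of $N$ collapse as $-(m-1)/2+(m-k)/2+1/2=1-k/2$ for every $m$, and the displayed series converges because $\sum_{m}m^{\,k+1/2}\norm{\Gamma^{(m)}}_\infty\le\big(\sup_m m^{\,k+1/2}2^{-m}\big)\sum_{m}2^m\norm{\Gamma^{(m)}}_\infty<\infty$ by hypothesis. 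Each $C_k$ is finite and independent of $N$, which establishes \eqref{eq:gradient-bounded}.

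\emph{Main obstacle.} The only genuinely delicate bookkeeping is balancing the $m$- and $k$-dependences at once: one needs the Gaussian $m$-tensor injective-norm estimate to grow at most polynomially in $m$, and one must check that the single decay assumption $\sum_m 2^m\norm{\Gamma^{(m)}}_\infty<\infty$ simultaneously (i) dominates the combinatorial factor $m!/(m-k)!\le m^k$ in each $C_k$ and (ii) keeps the union bound over all $m\ge2$ down to $e^{-cN}$ for one fixed $c>0$. Everything else---the uniformity over $\cB_N$ and the convexity of $K_N$---is automatic once the argument is phrased in terms of $\norm{\Gamma^{(m)}\diamond\Gp{m}}_{\op}$.
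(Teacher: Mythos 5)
Your argument is correct, and it is essentially the standard proof of this fact: the present paper does not prove Proposition~\ref{prop:gradients-bounded} itself but imports it from \cite{huang2023algorithmic}, and the route there is the same as yours — reduce $\norm{\nabla^k H_N(\bx)}_{\op}$ uniformly over $\cB_N$ to the injective norms $\norm{\Gamma^{(m)}\diamond\Gp{m}}_{\op}$, bound their means by a Gaussian comparison/$\eps$-net argument, apply Gaussian--Lipschitz concentration with constant $\norm{\Gamma^{(m)}}_\infty$, union bound over $m$, and get convexity of $K_N$ as an intersection of sublevel sets of seminorms of the disorder coefficients. The one small imprecision, which you already hedge against, is that the net argument for an order-$m$ Gaussian tensor yields $\E\norm{\Gp{m}}_{\op}=O(\sqrt{mN\log m})$ rather than $O(\sqrt{mN})$; any polynomial-in-$m$ factor is absorbed by the hypothesis $\sum_{m\ge 2}2^m\norm{\Gamma^{(m)}}_\infty<\infty$, so the conclusion and the constants $C_k$ are unaffected.
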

\section{Achieving Energy $\ALG$}
\label{sec:amp}

In this section we prove Theorem~\ref{thm:main-alg} by exhibiting an approximate message passing (AMP) algorithm.
Throughout this section, Assumption~\ref{as:nondegenerate} on non-degeneracy of $\xi$ will be enforced without loss of generality. 

\subsection{\revedit{Definition of Pseudo-Maximizer}}

As mentioned before Definition~\ref{def:subsolvable-alg}, the threshold $\ALG$ in the sub-solvable case depends on a notion of pseudo-maximizer. 
We now provide this definition, which was derived in \cite[Theorem 3]{huang2023algorithmic} as a necessary condition for $\Phi$ to maximize $\bbA$ defined in \eqref{eq:alg-for-optimizer} (and it is proved therein that a maximizer always exists).

\begin{definition}    
    \label{def:pseudo-maximizer}
    A coordinate-wise strictly increasing $C^2$ function $\Phi:[q_1,1]\to [0,1]^{\sS}$, for some $q_1\in [0,1]$, is a pseudo-maximizer if:
    \begin{enumerate}[label=(\arabic*)]
        \item $\Phi$ is \emph{admissible}, meaning it satisfies the normalization 
        \begin{equation}
        \label{eq:admissible}
        \la \vlam, \Phi(q)\ra = q,\quad\forall q\in [q_1,1].
        \end{equation}
        In particular $\Phi(1) = \vone$.
        \item $\Phi(q_1)$ is solvable.
        \item The derivative at $q_1$ satisfies $M^*(\Phi(q_1))\Phi'(q_1)=\vzero$. This amounts to no restriction when $\vh=\vzero$ and thus $(q_1,\Phi(q_1))=(0,\vzero)$; when $\vh \neq \vzero$ it means that
        \begin{equation}
            \label{eq:Phip-q1}
            \Phi_s'(q_1)
            =
            \frac{\Phi_s(q_1) (\xi^s\circ\Phi)'(q_1)}{\xi^s(\Phi(q_1))+h_s^2},
            \quad
            s\in\sS
            .
        \end{equation}
        \item For all $q\in [q_1,1]$, $\Phi$ solves the (second-order) \emph{tree-descending} differential equation:
        \begin{equation}
            \label{eq:tree-descending-ode}
            \Psi(q)
            \equiv
            \fr{1}{\Phi'_s(q)}
        \deriv{q}
        \sqrt{\fr{\Phi'_s(q)}{(\xi^s \circ \Phi)'(q)}}
        \end{equation}
        is independent of the species $s$.
        (See \cite[Lemma 4.37]{huang2023algorithmic} for well-posedness of this ODE.)
    \end{enumerate}
\end{definition}

Note that there may exist multiple such $\Phi$, see \cite[Figure 2]{huang2023algorithmic}.
If $\vone$ is super-solvable, we adopt the convention that $q_1=1$ and $\Phi$ has domain $\{1\}$.

We now give an efficient approximate message passing algorithm achieving energy $\bbA(\Phi)$ for any pseudo-maximizer $\Phi$.
In particular for the optimal pseudo-maximizer this achieves energy $\ALG$.

\subsection{Review of Approximate Message Passing}
\label{subsec:amp}

Here we recall the class of approximate message passing algorithms, specialized to our setting of interest. We initialize AMP with a deterministic vector $\bw^0$ with coordinates
\begin{equation}
\label{eq:AMP-init-concrete}
    w^0_i = w_{s(i)}
\end{equation}
depending only on the species.
Let $f_{t,s}:\bbR^{t+1}\to\bbR$ be a Lipschitz function for each $(t,s)\in \bbZ_{\geq 0}\times \sS$. For $(\bw^0,\bw^1,\dots,\bw^t)\in\bbR^{N\times (t+1)}$, let 
$f_{t}(\bw^0,\bw^1,\dots,\bw^t)\in\bbR^N$ be given by
\[
    f_{t}(\bw^0,\bw^1,\dots,\bw^t)_i
    =
    f_{t,s(i)}(w^1_i,\dots,w^t_i),\quad i\in [N].
\]
We generate subsequent iterates through recursions of the following form, where $\ons_t$ is known as the \emph{Onsager correction term}:
\begin{align}
\label{eq:AMP-body}
    \bw^{t+1}
    &=
    \nabla H_N(\bm^t)
    -
    \ons_t
    ;
    \\
\nonumber
    \bm^t
    &=
    f_{t}(\bw^0,\bw^1,\dots,\bw^t);
    \\
\label{eq:ONS-body}
    \ons_t
    &=
    \sum_{t'\leq t}
    d_{t,t'}
    \diamond
    f_{t'-1}(\bw^1,\dots,\bw^{t'-1});
    \\
\label{eq:dts-def}
    d_{t,t',s}
    &=
    \lt(
    \sum_{s'\in\sS}
    \partial_{x_{s'}}
    \xi^s
    \lt(
    \big(
    \bbE[M^t_{s''} M^{t'-1}_{s''}]
    \big)_{s''\in\sS}
    \rt)
    \cdot
    \bbE
    \lt[
    \partial_{W^{t'}_{s'}}f_{t,s'}(W^0_{s'},\dots,W^t_{s'})
    \rt]
    \rt)
    .
\end{align}
Here $W^t_s,M^t_s$ are defined as follows. $W^0_s=w_s$ and the variables
$(\wt W^t_s)_{(t,s)\in \bbZ_{\geq 1}\times \sS}$ form a centered Gaussian process with covariance defined recursively by
\begin{equation}
\label{eq:state-evolution-basic}
\begin{aligned}
    \bbE[\wt W^{t+1}_s \wt W^{t'+1}_{s}]
    &=
    \xi^s\lt(\bbE[f_{t,s}(W^0_s,\dots,W^{t}_s)f_{t',s}(W^0_s,\dots,W^{t'}_s)]\rt),
    \\
    W^t_s
    &=
    \wt W^t_s+h_s;
    \\
    M^t_s
    &=
    f_{t,s}(W^0_s,\dots,W^t_s)
\end{aligned}
\end{equation}
and $\bbE[\wt W^{t+1}_{s} \wt W^{t'+1}_{s'}]=0$ if $s\neq s'$ (i.e. different species are independent).

The following \emph{state evolution} characterizes the behavior of the above iterates. 
It states that for each $s\in\sS$, when $i\in \cI_s$ is uniformly random the sequence of coordinates $(w^1_i,w^2_i,\dots,w^t_i)$ has the same law as $(W^1_s,\dots,W^t_s)$. 
Say a function $\psi:\bbR^{\ell} \to \bbR$ is pseudo-Lipschitz if $|\psi(x) - \psi(y)| \le C(1+|x|+|y|)|x-y|$ for a constant $C$.

\begin{proposition}
\label{prop:state_evolution}
For any pseudo-Lipschitz function $\psi$ and $\ell\in\bbZ_{\geq 0}$, $s\in\sS$,
\begin{equation}
\label{eq:SE-body}
    \plim_{N\to\infty}\frac{1}{N_s}
    \sum_{i\in\cI_s}
    \psi(\bw^0_i,\dots,\bw^{\ell}_i)
    =
    \bbE
    [
    \psi(W^0_s,\dots,W^{\ell}_s)
    ]
    .
\end{equation}
\end{proposition}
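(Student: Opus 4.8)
The plan is to reduce this multi-species statement to the standard single-species AMP state evolution theorem by an explicit change of variables that ``symmetrizes'' the disorder within each species block, and then track the covariance structure through the recursion. The starting point is the observation that our Hamiltonian gradient $\nabla \wtH_N(\bm)$ is a sum over $k$ of terms of the form $N^{-(k-1)/2}\,\Gamma^{(k)}\diamond \bG^{(k)}$ contracted against $\bm^{\otimes(k-1)}$; the tensor $\Gamma^{(k)}\diamond\bG^{(k)}$ has independent Gaussian entries whose variances depend only on the species of the indices. This is precisely the structure for which a \emph{block-structured} AMP state evolution holds. I would first record the precise form of $\nabla H_N(\bm)$ as a polynomial in $\bm$ with Gaussian tensor coefficients, noting $\nabla H_N(\bm) = \bh + \sum_{k\ge 2} \frac{k}{N^{(k-1)/2}} (\Gamma^{(k)}\diamond\bG^{(k)})[\bm^{\otimes(k-1)}, \cdot\,]$ (up to symmetrization conventions), and verify that the Onsager term \eqref{eq:ONS-body}--\eqref{eq:dts-def} is exactly the one dictated by differentiating the nonlinearity and the species-dependent variance profile.

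The main technical step is to invoke (a block/inhomogeneous-variance version of) the general AMP state evolution theorem — e.g. the formulation for AMP with matrix-valued iterates or with a variance profile, as in Javanmard--Montanari or Berthier--Montanari--Nguyen — and check that our recursion is an instance of it. The key bookkeeping: for a vector $\bu$ with $\|\bu_s\|^2 = q_s N_s$ approximately, the contraction $N^{-(k-1)/2}(\Gamma^{(k)}\diamond\bG^{(k)})[\bu^{\otimes(k-1)},\cdot\,]$ restricted to species-$s$ coordinates is, conditionally, a centered Gaussian vector with i.i.d.\ entries of variance $\sum_{s_1,\dots,s_{k-1}} \gamma_{s,s_1,\dots,s_{k-1}}^2 \prod_j (\lambda_{s_j} q_{s_j}) = \xi^s\big((\lambda_{s'} q_{s'})_{s'}\big)$ up to the normalization built into $\xi^s$. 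Summing over $k$ and over the covariance between iterates $\bm^t,\bm^{t'}$ produces exactly the recursion \eqref{eq:state-evolution-basic}: $\bbE[\wt W^{t+1}_s \wt W^{t'+1}_s] = \xi^s(\bbE[f_{t,s}f_{t',s}])$, with cross-species covariances vanishing because the relevant tensor entries are independent across the species of the free index. The external field $\bh = \vh\diamond\bone$ contributes the deterministic shift $W^t_s = \wt W^t_s + h_s$.

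I would then pass from the Gaussian-coefficient polynomial model to the abstract AMP iteration by conditioning on $\bG^{(k)}$ given the history $(\bw^0,\dots,\bw^t)$ — the usual conditioning argument that makes $\nabla H_N(\bm^t)$ equal in law, conditionally, to a fixed component plus a fresh Gaussian, with the fixed component cancelled by the Onsager term up to $o_N(1)$ error in $\|\cdot\|_N$. Since this is standard once the variance profile is identified, and since the nonlinearities $f_{t,s}$ are Lipschitz and we test against pseudo-Lipschitz $\psi$, the conclusion \eqref{eq:SE-body} follows. The hardest part is purely organizational rather than conceptual: carefully matching the $\diamond$-structured disorder and the definition of $d_{t,t',s}$ in \eqref{eq:dts-def} to the Onsager correction produced by the general theorem — in particular verifying the chain-rule structure $\partial_{x_{s'}}\xi^s$ composed with $\bbE[\partial_{W^{t'}_{s'}} f_{t,s'}]$ is the right derivative of the variance map — and handling the infinite sum over $k$ via the summability assumption $\sum_k 2^k\|\Gamma^{(k)}\|_\infty < \infty$ together with Proposition~\ref{prop:gradients-bounded} to truncate at finite degree with negligible error. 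I expect no genuine obstacle, only the need to be meticulous about normalization constants.
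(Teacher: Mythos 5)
Your proposal correctly identifies the right ingredients (the species-dependent variance profile giving $\xi^s$, the Gaussian conditioning strategy, the matching of the Onsager term to the derivative of the covariance map, and the truncation in $k$), but it rests on a citation that does not exist: there is no off-the-shelf ``block/inhomogeneous-variance version of the general AMP state evolution theorem'' covering this setting. The references you name (Javanmard--Montanari, Berthier--Montanari--Nguyen) treat the matrix case $k=2$ only; the tensor case $k\ge 3$ is proved in \cite{ams20} for a \emph{single} species; and no prior work handles multi-species mixed tensors, let alone with the additional feature the paper needs, namely nonlinearities $f_t$ that also depend on exogenous Gaussian vectors $\bg^0,\dots,\bg^t$ (used later for branching IAMP). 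The content of Proposition~\ref{prop:state_evolution} is precisely the theorem you propose to invoke, which is why the paper devotes its entire appendix to proving Theorem~\ref{thm:mixedAMP} from scratch, adapting the argument of \cite{ams20}: it introduces an intermediate ``Long AMP'' iteration, establishes its state evolution by induction (Theorem~\ref{thm:SELAMP}), and then shows AMP and LAMP iterates agree asymptotically (Lemma~\ref{lem:ampequalslamp}).

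A second, related understatement is your treatment of the conditioning step as ``standard once the variance profile is identified.'' The gradient involves the \emph{symmetrized} tensors $\bA^{(k)}=N^{-(k-1)/2}\Gamma^{(k)}\diamond\sum_{\pi}(\bG^{(k)})^{\pi}$, whose entries are not independent, so the conditional law of $\bA^{(k)}$ given the past observations $\bA^{(k)}\{\f_{t'}\}=\by_{k,t'+1}$ is not simply ``fixed part plus fresh Gaussian with the fixed part cancelled by Onsager.'' Computing $\E[\bA^{(k)}\mid\cF_t]$ requires solving a Lagrange-multiplier linear system in the weighted Frobenius geometry determined by $\Gamma^{(k)}$ (Lemma~\ref{lem:symregression}), whose solvability rests on well-conditioning assumptions on the Gram matrices $\bG_{\xi^{k,s},t}$ and on the operators $\cL_{k,t}=\bfone+\cT_{k,t}$, which must themselves be justified by a perturbation argument. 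Identifying the resulting ``parallel'' component with the Onsager term is the heart of Lemma~\ref{lem:ampequalslamp} and uses Gaussian integration by parts together with the already-established LAMP state evolution; it is not a consequence of a general theorem one can check hypotheses against. To complete your argument you would need to carry out this construction (or an equivalent one), which is the bulk of the work.
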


This proposition allows us to read off normalized inner products of the AMP iterates, since e.g.
\[
    \langle \bw^k,\bw^{\ell}\rangle_N
    \simeq
    \sum_{s\in\sS}
    \lambda_s
    \bbE[W^k_s W^{\ell}_s].
\]
Proposition~\ref{prop:state_evolution} is proved in Appendix~\ref{sec:ProofSE}. In fact we show a slight generalization allowing $f_t=f_t(\bw^0,\dots,\bw^t,\bg^0,\dots,\bg^t)$ to depend also on independently generated vectors $(\bg^0,\dots,\bg^t)\in\bbR^{N(t+1)}$. When using this extension, we will always take each $\bg^t\sim\cN(0,I_N)$ to be standard Gaussian. The more general result essentially says that $\bg_t$ still acts as an independent Gaussian for the purposes of state evolution. Since this is relatively intuitive, we refer to Theorem~\ref{thm:mixedAMP} in the appendix for a precise statement.

For random matrices (i.e. the case of quadratic $H$) there is a considerable literature establishing state evolution in many settings beginning with \cite{bolthausen2014iterative,BM-MPCS-2011} and later \cite{bayati2015universality,berthier2019state,chen2020universality,fan2020approximate,dudeja2022universality} (see also \cite{feng2022unifying} for a survey of many statistical applications). The generalization to tensors was introduced in \cite{richard2014statistical} and proved in \cite{ams20}, whose approach we follow.

\subsection{Stage $\I$: Finding the Root of the Ultrametric Tree}
\label{subsec:root-finding}

Our goal in this subsection will be to compute a vector $\bm^{\ul}$ satisfying
\[
    \plim_{\ul\to\infty}
    \lim_{N\to\infty}
    \vR(\bm^{\ul},\bm^{\ul})=\Phi(q_1)
\]  
and with the correct energy value (as stated in Lemma~\ref{lem:sphereenergy} below).
We take as given a maximizer $\Phi$ to $\bbA$ with domain $[q_1,1]$.
Recall $\Phi(q_1)$ is super-solvable: either $\vone$ is strictly sub-solvable, in which case $\Phi(q_1)$ is solvable, or $\vone$ is super-solvable, in which case $\Phi(q_1) = \Phi(1) = \vone$.

We use the initialization
\[
    w^0_i = \sqrt{\xi^s(\Phi(q_1))+h_s^2},\quad i\in \cI_s.
\]
Define the vector $\va\in\mathbb R^{\sS}$ by 
\[
    a_s=
    \sqrt{\frac{\Phi_s(q_1)}{\xi^s(\Phi(q_1))+h_s^2}}.
\]
Subsequent iterates are defined via the following recursion.
\begin{align}
\label{eq:RSsphere}
  \bw^{k+1}
  &=
  \nabla H_N(\bm^k)
  -
  \vb_k
  \diamond
  \bm^{k-1}
  \\
\nonumber
  &=
  \bh
  +
  \nabla \wtH_N(\bm^k)
  -
  \vb_k
  \diamond
  \bm^{k-1};
  \\
\label{eq:mk-def}
  \bm^k
  &=
    \va\diamond \bw^k
\\
\label{eq:zeta-defn}
    b_{k,s}
    &\equiv
    \sum_{s'\in\sS}
    a_{s'}
    \partial_{s'}\xi^s
    \big(\vR(\bm^k,\bm^{k-1})\big)
    .
\end{align}
The last term in \eqref{eq:RSsphere} comes from specializing the formula \eqref{eq:ONS-body} for the Onsager term.

Next recalling \eqref{eq:state-evolution-basic}, let $(W^j_s,M^j_s)_{j\geq 0,s\in\sS}$ be the state evolution limit of the coordinates of 
\[
    (\bw^{0},\bm^{0},\dots,\bw^k,\bm^k)
\]
as $N\to\infty$. Concretely, each $W^j_s$ is Gaussian with mean $h_s$ and 
\[
    M^{j}_s=\sqrt{\frac{\Phi_s(q_1)}{\xi^s(\Phi(q_1))+h_s^2}
    }
    \cdot 
    W^j_s,
    \quad
    j\geq 0,~
    s\in\sS.
\]
We next compute the covariance of the Gaussians $\wt W^j_s = W^j_s - h_s$. 
Define $\valpha : \bbR_{\ge 0}^\sS \to \bbR_{\ge 0}^\sS$ by
\begin{equation}
    \label{eq:overlap-recursion-AMP}
    \alpha_s(\vx) = 
    \lt(\xi^s(\vx)+h_s^2\rt)\lt(\frac{\Phi_s(q_1)}{\xi^s(\Phi(q_1))+h_s^2}\rt)\,.
\end{equation}
Define the (deterministic) $\bbR_{\geq 0}^{\sS}$-valued sequence $(\vR^0,\vR^1,\dots)$ of asymptotic overlaps recursively by $\vR^0=\vzero$ and $\vR^{k+1} = \valpha(\vR^k)$.

\begin{lemma}
\label{lem:RSconverge}
For integers $0\leq j<k$, the following equalities hold (the first in distribution):
\begin{align} 
\label{eq:id1.0}
    W^j_s&\stackrel{d}{=} h_s+Z\sqrt{\xi^s(\Phi(q_1))},\quad Z\sim \cN(0,1)\\
\label{eq:id2.0}
    \mathbb E[\wt W^j_s \wt W^k_s]&=\xi^s(\vR^j)\\
\label{eq:id3.0}
    \mathbb E[(M^j_s)^2]&=\Phi_s(q_1)\\
\label{eq:id4.0}
    \mathbb E[M^j_s M^k_s]&=R^{j+1}_s.
\end{align}
\end{lemma}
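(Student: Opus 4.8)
The plan is to establish all four identities simultaneously by induction on $k$, using the state evolution recursion \eqref{eq:state-evolution-basic} together with the specific choices of initialization $w^0_i$ and multiplier $\va$. The key structural observation is that the state evolution covariance for the Gaussians $\wt W^{j}_s$ is driven by the quantities $\bbE[M^{j}_s M^{j'}_s] = a_s^2 \, \bbE[W^{j}_s W^{j'}_s]$, so everything reduces to tracking the scalar sequence of cross-covariances of the $\wt W$'s, which is exactly what the map $\valpha$ and the sequence $(\vR^k)$ encode.

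First I would verify the base/diagonal case. By the initialization, $W^0_s = h_s + \wt W^0_s$ where the ``noise'' part has variance $\xi^s(\Phi(q_1))$ by construction: indeed $w^0_i = \sqrt{\xi^s(\Phi(q_1)) + h_s^2}$, and in state evolution the mean is $h_s$ while the second moment is $\xi^s(\Phi(q_1)) + h_s^2$, giving \eqref{eq:id1.0} for $j=0$. For $j \ge 1$, I would show $\bbE[(\wt W^{j}_s)^2] = \xi^s(\Phi(q_1))$ as well, by induction: \eqref{eq:state-evolution-basic} gives $\bbE[(\wt W^{j+1}_s)^2] = \xi^s\big(\bbE[(M^{j}_s)^2]\big)$ (diagonal entry), and $\bbE[(M^{j}_s)^2] = a_s^2 \, \bbE[(W^{j}_s)^2] = a_s^2(\xi^s(\Phi(q_1)) + h_s^2) = \Phi_s(q_1)$ by the definition of $a_s$; this proves \eqref{eq:id3.0}, and then $\xi^s$ evaluated at the vector $(\Phi_{s'}(q_1))_{s'}$ — i.e.\ at $\Phi(q_1)$ — returns $\xi^s(\Phi(q_1))$, closing the diagonal induction and giving \eqref{eq:id1.0} in full.

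Next, for the off-diagonal identities \eqref{eq:id2.0} and \eqref{eq:id4.0} with $j < k$, I would run a joint induction on $k$. Using $\bbE[M^{j}_s M^{k}_s] = a_s^2 \, \bbE[W^{j}_s W^{k}_s] = a_s^2\big(h_s^2 + \bbE[\wt W^{j}_s \wt W^{k}_s]\big)$, the recursion \eqref{eq:state-evolution-basic} gives
\[
    \bbE[\wt W^{j+1}_s \wt W^{k+1}_s]
    = \xi^s\Big(\big(\bbE[M^{j}_{s'} M^{k}_{s'}]\big)_{s'\in\sS}\Big)
    = \xi^s\Big(\big(a_{s'}^2(h_{s'}^2 + \bbE[\wt W^{j}_{s'}\wt W^{k}_{s'}])\big)_{s'}\Big).
\]
Setting $\vR^{j}_{s'} = ?$ — more precisely, if by induction $\bbE[\wt W^{j}_{s'} \wt W^{k}_{s'}] = \xi^{s'}(\vR^{\min(j,k)})$ is not quite the right bookkeeping; rather I would track $\bbE[M^{j}_s M^{k}_s]$ directly and show it equals $R^{m+1}_s$ where $m = \min(j,k)$. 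Plugging $\bbE[M^{j}_{s'} M^{k}_{s'}] = R^{m+1}_{s'}$ into the display and using the definition of $\valpha$ in \eqref{eq:overlap-recursion-AMP} — note $\alpha_s(\vx) = (\xi^s(\vx) + h_s^2)\, a_s^2$ — yields $\bbE[M^{j+1}_s M^{k+1}_s] = a_s^2(\xi^s(\vR^{m+1}) + h_s^2) = \alpha_s(\vR^{m+1}) = R^{m+2}_s$, which advances the induction since $\min(j+1,k+1) = m+1$. The base case of this off-diagonal induction is $j = 0$: here $\bbE[M^{0}_s M^{k}_s] = a_s^2 \bbE[W^0_s W^k_s] = a_s^2 h_s^2$ because $\wt W^0_s$ is deterministic (the initialization is non-random), and one checks $a_s^2 h_s^2 = R^1_s = \alpha_s(\vzero)$ directly from \eqref{eq:overlap-recursion-AMP} and $\vR^0 = \vzero$. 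This gives \eqref{eq:id4.0}, and \eqref{eq:id2.0} follows by unwinding one step: $\bbE[\wt W^{j}_s \wt W^{k}_s] = \xi^s\big((\bbE[M^{j-1}_{s'} M^{k-1}_{s'}])_{s'}\big) = \xi^s(\vR^{\min(j,k)})$ for $j \ge 1$, using \eqref{eq:id4.0} at the previous level.

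The main obstacle I anticipate is purely bookkeeping: correctly aligning the index shifts between the $W$-indices, the $M$-indices, and the $\vR$-sequence indices (the ``$j+1$'' shift in \eqref{eq:id4.0} versus \eqref{eq:id2.0}), and handling the fact that iterate $0$ is deterministic so that $\wt W^0_s \equiv 0$ rather than a genuine Gaussian — this is why $\bbE[\wt W^0_s \wt W^k_s] = 0$ and the $h_s^2$ terms appear asymmetrically at the base case. Once the inductive hypothesis is stated in the clean form ``$\bbE[M^{j}_s M^{k}_s] = R^{\min(j,k)+1}_s$ and $\bbE[(\wt W^{j}_s)^2] = \xi^s(\Phi(q_1))$ for all $j,k$,'' the algebra is a direct substitution using the definitions of $a_s$, $\valpha$, and $\xi^s$; no analytic input beyond Proposition~\ref{prop:state_evolution} (which legitimizes the state evolution description) is needed.
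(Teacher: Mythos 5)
Your proposal follows essentially the same route as the paper: a double induction driven by the state evolution recursion \eqref{eq:state-evolution-basic}, with the diagonal identities \eqref{eq:id1.0}, \eqref{eq:id3.0} bootstrapping each other and the off-diagonal identities \eqref{eq:id2.0}, \eqref{eq:id4.0} advancing together via the map $\valpha$. The algebra in your inductive step matches the paper's.

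One detail in your base case is misstated: you assert $\wt W^0_s\equiv 0$, but the initialization is $w^0_i=\sqrt{\xi^s(\Phi(q_1))+h_s^2}$, so $W^0_s$ is the deterministic constant $w_s=\sqrt{\xi^s(\Phi(q_1))+h_s^2}$ and $\wt W^0_s=w_s-h_s\neq 0$. Consequently $\bbE[M^0_sM^k_s]=a_s^2\,w_s h_s$ rather than $a_s^2h_s^2=\alpha_s(\vzero)$; the correct anchor for the off-diagonal induction is \eqref{eq:id2.0} at $j=0$, which holds because $\wt W^k_s$ is centered and independent of the (deterministic) $\wt W^0_s$, giving $\bbE[\wt W^0_s\wt W^k_s]=0=\xi^s(\vR^0)$. (The paper's own passage from \eqref{eq:id2.0} to \eqref{eq:id4.0} at $j=0$ contains the same implicit slip; it is harmless because only the limit $\vR^\infty=\Phi(q_1)$ and the self-overlap \eqref{eq:id3.0} are used downstream, and since $a_s^2 w_s h_s\le\Phi_s(q_1)$ the monotone iteration still converges to the same fixed point.)
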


\begin{proof}
We proceed by induction on $j$, first showing \eqref{eq:id1.0} and \eqref{eq:id3.0} together. As a base case, \eqref{eq:id1.0} holds for $j=0$ by initialization. For the inductive step, assume first that \eqref{eq:id1.0} holds for $j$. Then by the definition \eqref{eq:mk-def},
\begin{align*}
  \mathbb E\lt[(M^j_s)^2\rt]
  &=
  \lt(\xi^s(\Phi(q_1))+h_s^2\rt)\cdot 
    a_s^2
  \\
  &=
  \lt(\xi^s(\Phi(q_1))+h_s^2\rt)\cdot \lt(\frac{\Phi_s(q_1)}{\xi^s(\Phi(q_1))+h_s^2}\rt)
  \\
  &=\Phi_s(q_1)
\end{align*}
so that \eqref{eq:id1.0} implies \eqref{eq:id3.0} for each $j\geq 0$. On the other hand, state evolution directly implies that if \eqref{eq:id3.0} holds for $j$ then \eqref{eq:id1.0} holds for $j+1$. This establishes \eqref{eq:id1.0} and \eqref{eq:id3.0} for all $j\geq 0$.

We similarly show \eqref{eq:id2.0} and \eqref{eq:id4.0} together by induction, beginning with \eqref{eq:id2.0}. When $j=0$ it is clear because $\wt W^k_s$ is mean zero and independent of $\wt W^0_s$. 
Just as above, it follows from state evolution that \eqref{eq:id2.0} for $(j,k)$ implies \eqref{eq:id4.0} for $(j,k)$ which in turn implies \eqref{eq:id2.0} for $(j+1,k+1)$. Hence induction on $j$ proves \eqref{eq:id2.0} and \eqref{eq:id4.0} for all $(j,k)$.
\end{proof}

The next lemma is crucial and uses super-solvability of $\Phi(q_1)$.

\begin{lemma}
\label{lem:Rj-to-Phiq1}
    The limit $\vR^\infty \equiv \lim_{j\to\infty} \vR^j$ exists and equals $\Phi(q_1)$.
\end{lemma}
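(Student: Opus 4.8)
The plan is to treat the recursion $\vR^{k+1} = \valpha(\vR^k)$ as a monotone fixed-point iteration and identify $\vp := \Phi(q_1)$ as its least fixed point above the origin. First, $\valpha$ from \eqref{eq:overlap-recursion-AMP} is order-preserving on $\bbR_{\ge 0}^\sS$: each $\xi^s$ is a power series with nonnegative coefficients, so $\vx \preceq \vx'$ implies $\xi^s(\vx) \le \xi^s(\vx')$ and hence $\alpha_s(\vx) \le \alpha_s(\vx')$; and the normalization in \eqref{eq:overlap-recursion-AMP} makes $\vp$ a fixed point, $\valpha(\vp) = \vp$. If $\vh = \vzero$ then $\vp = \vzero$ and the claim is immediate, so assume $\vh \ne \vzero$; then $\vp$ is super-solvable and, by the conventions in Definition~\ref{def:pseudo-maximizer}, nonzero, so $\vp \in (0,1]^\sS$, in particular $\vp \succ \vzero$, and the constants $c_s := p_s/(\xi^s(\vp)+h_s^2)$ in $\alpha_s(\vx) = c_s(\xi^s(\vx)+h_s^2)$ are strictly positive. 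Since $\valpha$ is order-preserving, fixes $\vp$, and satisfies $\valpha(\vzero) \succeq \vzero$, it maps $[\vzero,\vp]$ into itself; combined with $\vR^0 = \vzero \preceq \valpha(\vzero) = \vR^1$ this yields $\vR^0 \preceq \vR^1 \preceq \cdots \preceq \vp$, so $\vR^\infty := \lim_j \vR^j$ exists in $[\vzero,\vp]$ and is a fixed point of $\valpha$ by continuity. It remains to show that $\vp$ is the only fixed point $\vy$ of $\valpha$ with $\vzero \preceq \vy \preceq \vp$.

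The crux — and the only point where super-solvability of $\vp$ enters — is a convexity observation along the segment from such a $\vy$ to $\vp$. Fix $\vy$ with $\valpha(\vy) = \vy$ and $\vzero \preceq \vy \preceq \vp$, and set $\vw := \vp - \vy \succeq \vzero$; we show $\vw = \vzero$. For $s \in \sS$ and $t \in [0,1]$ define
\[
    \Delta_s(t) := \alpha_s(\vy + t\vw) - (y_s + t w_s),
\]
so that $\Delta_s(0) = \Delta_s(1) = 0$. Because $\nabla^2\alpha_s = c_s\nabla^2\xi^s$ has nonnegative entries and $\vw \succeq \vzero$, we get $\Delta_s''(t) = \vw^\top\nabla^2\alpha_s(\vy + t\vw)\,\vw \ge 0$; thus each $\Delta_s$ is convex, which already gives $\Delta_s \le 0$ on $[0,1]$ and $\Delta_s'(1) \ge 0$. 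A one-line computation from \eqref{eq:M*sym} (using $\partial_{x_s}\xi = \lambda_s\xi^s$) evaluates this derivative:
\[
    \frac{\lambda_s}{c_s}\,\Delta_s'(1) = -\big(M^*(\vp)\,\vw\big)_s, \qquad s \in \sS
\]
(equivalently, the Jacobian of $\valpha$ at $\vp$ equals $I - \diag(c_s/\lambda_s)\,M^*(\vp)$). Since $\lambda_s/c_s > 0$, the inequalities $\Delta_s'(1) \ge 0$ say precisely $M^*(\vp)\vw \preceq \vzero$. But $\vp$ is super-solvable, so $M^*(\vp) \succeq 0$, and then $0 \le \vw^\top M^*(\vp)\vw = \sum_s w_s\,(M^*(\vp)\vw)_s \le 0$; hence $\vw^\top M^*(\vp)\vw = 0$ and, $M^*(\vp)$ being (symmetric) positive semidefinite, $M^*(\vp)\vw = \vzero$. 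In other words $\Delta_s'(1) = 0$ for every $s$.

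To finish, observe that a convex function on $[0,1]$ with $\Delta_s(1) = \Delta_s'(1) = 0$ lies above its tangent line at $1$, which is identically zero, so $\Delta_s \ge 0$ on $[0,1]$; with $\Delta_s \le 0$ this forces $\Delta_s \equiv 0$, hence $\Delta_s''(t_0) = 0$ for any $t_0 \in (0,1)$. Writing $\vx_0 := \vy + t_0\vw \succeq t_0\vp \succ \vzero$, this reads $\vw^\top\nabla^2\xi^s(\vx_0)\,\vw = 0$. Here Assumption~\ref{as:nondegenerate} is used: since $\Gamma^{(3)} > 0$, the coefficient of $x_a^2$ in $\xi^s = \lambda_s^{-1}\partial_{x_s}\xi$ — which comes from the monomial $x_s x_a^2$ in the cubic part of $\xi$ — is strictly positive, so $\partial_{x_a}^2\xi^s(\vx_0) > 0$ for each $a \in \sS$; as every entry of $\nabla^2\xi^s(\vx_0)$ is nonnegative, $0 = \vw^\top\nabla^2\xi^s(\vx_0)\vw \ge \sum_a w_a^2\,\partial_{x_a}^2\xi^s(\vx_0)$ forces $\vw = \vzero$, i.e. $\vy = \vp$. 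Hence $\vR^\infty = \Phi(q_1)$.

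The routine parts are the monotone-convergence bookkeeping of the first paragraph and the derivative identity $\tfrac{\lambda_s}{c_s}\Delta_s'(1) = -(M^*(\vp)\vw)_s$, a direct manipulation of the definitions. The step needing a little insight is the second paragraph: recognizing that convexity of each $\Delta_s$ along the segment, combined with the sign of $M^*(\vp)$, upgrades $\Delta_s'(1) \ge 0$ to $\Delta_s'(1) = 0$, and that this upgrade — via non-degeneracy — then collapses the entire segment. I do not foresee a genuine obstacle, but one should take care to confirm $\vp \succ \vzero$ (so that $c_s > 0$ and the quoted properties of $M^*$ apply) in the case $\vh \ne \vzero$ with some coordinate $h_s = 0$.
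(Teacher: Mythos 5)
Your proof is correct, and the first half (monotone iteration, $\vR^j$ increasing to a fixed point $\vR^\infty$ in $[\vzero,\Phi(q_1)]$) matches the paper exactly. Where you diverge is the uniqueness step. The paper invokes Proposition~\ref{prop:diagonally-signed-min-max}: for any $\vw\succeq\vzero$ it finds a \emph{single} coordinate $s$ with $(M\vw)_s\le w_s$ (else $M^*(\Phi(q_1))\vw\prec\vzero$ would force $\blambda_{\min}(M^*)<0$), and then a one-dimensional strict-convexity estimate $f(-1)>f(0)-f'(0)$ at that coordinate immediately contradicts $\valpha(\vR^\infty)=\vR^\infty$. You instead work with all coordinates at once: convexity of each $\Delta_s$ along the segment gives $M^*(\vp)\vw\preceq\vzero$, positive semidefiniteness upgrades this to $M^*(\vp)\vw=\vzero$ via the quadratic form $\vw^\top M^*(\vp)\vw$, and a tangent-line argument plus the strictly positive diagonal of $\nabla^2\xi^s$ (from $\Gamma^{(3)}>0$) collapses $\vw$ to zero. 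Both routes rest on the same two ingredients (PSD-ness of $M^*(\Phi(q_1))$ and non-degeneracy feeding strict convexity), but yours bypasses the Perron--Frobenius-type lemma entirely and is self-contained given only that $M^*$ is symmetric PSD; the paper's is shorter because the strict inequality kills the fixed point in one line once the right coordinate is located. Your derivative identity $\tfrac{\lambda_s}{c_s}\Delta_s'(1)=-(M^*(\vp)\vw)_s$ checks out against \eqref{eq:M*sym}, and your closing caveat about $\Phi(q_1)\succ\vzero$ when $\vh\neq\vzero$ is a hypothesis the paper also uses implicitly (it is needed even to write down $M^*(\Phi(q_1))$ and $\va$), so it is not a gap particular to your argument.
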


\begin{proof}
    First we observe that $\valpha$ (recall \eqref{eq:overlap-recursion-AMP}) is coordinate-wise strictly increasing in the sense that if $0\preceq x\prec y$ then $\valpha(x)\prec \valpha(y)$. 
    Moreover $\valpha(\vzero)\succ 0$ (assuming $\vh\neq 0$, else the result is trivial) and $\valpha(\Phi(q_1))=\Phi(q_1)$. 
    Therefore $\vR^\infty$ exists, $\valpha(\vR^\infty)=\vR^\infty$, and
    \[
        \vzero\preceq \vR^\infty\preceq\Phi(q_1).
    \]
    It remains to show that the above forces $\vR^\infty=\Phi(q_1)$ to hold.
    
    Let $M\in\bbR^{\sS\times \sS}$ be the matrix with entries $M_{s,s'}=\deriv{t}\valpha_s(\Phi(q_1)+te_{s'})|_{t=0}$ for $e_{s'}$ a standard basis vector. 
    Then $M$ is the derivative matrix for $\valpha$ at $\Phi(q_1)$ in the sense that for any $\vu\in\bbR^{\sS}$,
    \[
        \deriv{t}\valpha(\Phi(q_1)+t\vu)|_{t=0}=M\vu.
    \]
    We easily calculate that
    \[
        M_{s,s'} = \fr{\Phi_s(q_1) \partial_{x_s,x_{s'}}\xi(\Phi(q_1))}{\partial_{x_s}\xi(\Phi(q_1)) + \lambda_s h_s^2}.
    \]    
    We claim that for any entry-wise non-negative vector $\vw\in\mathbb R_{\geq 0}^{\sS}$,
    \begin{equation}
    \label{eq:Mws} 
        (M\vw)_s\leq w_s
    \end{equation}
    for some $s\in \sS$. 
    Indeed, suppose to the contrary that $(M\vw)_s > w_s$ for all $s\in \sS$.
    This rearranges to
    \[
        \fr{\partial_{x_s}\xi(\Phi(q_1)) + \lambda_s h_s^2}{\Phi_s(q_1)} w_s 
        - \sum_{s'\in \sS} \partial_{x_s,x_{s'}} \xi(\Phi(q_1)) w_{s'} < 0
        \quad \forall s\in \sS,
    \]
    i.e. $M^*(\Phi(q_1)) \vw \prec \vzero$ (recall \eqref{eq:M*sym}).
    Proposition~\ref{prop:diagonally-signed-min-max} then implies that $\blambda_{\min}(M^*(\Phi(q_1))) < 0$, so $\Phi(q_1)$ is strictly sub-solvable, which is a contradiction. 
    Thus \eqref{eq:Mws} holds for some $s\in \sS$. 

    Now suppose for sake of contradiction that $\vR^\infty \prec \Phi(q_1)$, let $\vw=\Phi(q_1)-\vR^\infty$, and choose $s\in\sS$ such that \eqref{eq:Mws} holds. Write $f(t)=\alpha_s(\Phi(q_1)+t\vw)$. Since $\alpha_s$ is a polynomial with non-negative coefficients and $\xi$ is non-degenerate, $f$ is strictly convex and strictly increasing on $[-1,0]$. Hence
    \[
        \alpha_s(\vR^\infty)
        = f(-1)
        >
        f(0)-f'(0)
        \geq
        \Phi_s(q_1)-(M\vw)_s
        \stackrel{\eqref{eq:Mws}}{\geq}
        \Phi_s(q_1)-w_s
        =
        R^\infty_s.
    \]
    The first inequality above is strict, so we deduce that $\valpha(\vR^\infty)\neq\vR^\infty$ if $\vR^\infty\prec\Phi(q_1)$. This contradicts the definition of $\vR^\infty$. Therefore $\vR^\infty=\Phi(q_1)$, completing the proof.
\end{proof}

\begin{remark}
Super-solvability of $\Phi(q_1)$ is a tight condition for the above argument to hold, as the matrix $M$ above needs to have Perron-Frobenius eigenvalue at most $1$. Indeed suppose that $\Phi(q_1)$ was chosen so that $\lambda_1(M)>1$. Then there exists $\vw\in\bbR_{>0}^{\sS}$ with $M\vw\succ \vw$.
Letting $\vx=\Phi(q_1)-\eps \vw$ for small $\eps>0$, we find $\valpha(\vx)\prec \vx$.
Monotonicity implies that $\valpha$ maps the compact, convex set
\[
    K=\{\vy\in[0,1]^{\sS}~:~\vzero\preceq \vy\preceq \vx\}
\]
into itself. By the Brouwer fixed point theorem, a fixed point of $\valpha$ strictly smaller than $\Phi(q_1)$ exists whenever $\Phi(q_1)$ is strictly subsolvable.
\end{remark}

We finish our analysis of the first AMP phase by computing the asymptotic energy it achieves. As expected, the resulting value agrees with the first term in the formula \eqref{eq:alg-for-optimizer} for $\ALG$.
\begin{lemma}
\label{lem:sphereenergy}
\[
  \lim_{k\to\infty} \plim_{N\to\infty}\frac{H_N(\bm^k)}{N}
  = 
    \sum_{s\in \sS}
    \lambda_s
      \sqrt{
      \Phi_s(q_1)
      \cdot
      \lt(h_s^2+\xi^s(\Phi(q_1))\rt)
      }
    \,.
\]
\end{lemma}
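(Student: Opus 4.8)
The plan is to express $\wtH_N(\bm^k)$ as a line integral of its gradient along the ray from $\bzero$ to $\bm^k$, to evaluate each directional derivative by state evolution, to send $N\to\infty$ and then $k\to\infty$, and finally to add the external-field contribution (which is immediate from Proposition~\ref{prop:state_evolution}).

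Concretely, since $\wtH_N(\bzero)=0$, I would start from
\[
\frac{\wtH_N(\bm^k)}{N}=\int_0^1 \frac1N\lt\langle \nabla \wtH_N(t\bm^k),\bm^k\rt\rangle\,\de t .
\]
For each fixed $t$ the vector $t\bm^k$ is a species-wise linear, hence Lipschitz, function of $\bw^k$, so it can be appended as a new iterate to the AMP of Subsection~\ref{subsec:root-finding}. Writing $\nabla H_N(t\bm^k)=\bw^{k,t}+\ons^{k,t}$ for the corresponding fresh AMP iterate and its Onsager term, Proposition~\ref{prop:state_evolution} and Lemma~\ref{lem:RSconverge} give $\plim_N\frac1N\langle \bw^{k,t},\bm^k\rangle=\sum_{s\in\sS}\lambda_s a_s(\xi^s(t\vR^k)+h_s^2)$, and formulas \eqref{eq:ONS-body}--\eqref{eq:dts-def} give $\plim_N\frac1N\langle \ons^{k,t},\bm^k\rangle=t\sum_{s\in\sS}\lambda_s R^k_s\sum_{s'\in\sS}a_{s'}\partial_{x_{s'}}\xi^s(t\vR^k)$ (the factors of $t$ coming from the overlap $\vR(t\bm^k,\bm^{k-1})=t\vR(\bm^k,\bm^{k-1})\to t\vR^k$ and $\bbE[M^k_s M^{k-1}_s]=R^k_s$ from \eqref{eq:id4.0}). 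Subtracting $\plim_N\frac1N\langle\bh,\bm^k\rangle=\sum_{s\in\sS}\lambda_s a_s h_s^2$ yields
\[
\plim_{N\to\infty}\frac1N\lt\langle \nabla \wtH_N(t\bm^k),\bm^k\rt\rangle=\sum_{s\in\sS}\lambda_s a_s\,\xi^s(t\vR^k)+t\sum_{s\in\sS}\lambda_s R^k_s\sum_{s'\in\sS}a_{s'}\partial_{x_{s'}}\xi^s(t\vR^k) .
\]
By Proposition~\ref{prop:gradients-bounded} this integrand is uniformly bounded and uniformly Lipschitz in $t$ on an event of probability $1-e^{-cN}$, so $\plim_N$ may be interchanged with $\int_0^1\cdot\,\de t$ (via a finite grid in $t$).

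Next I would let $k\to\infty$, using $\vR^k\to\Phi(q_1)$ from Lemma~\ref{lem:Rj-to-Phiq1}, and evaluate the resulting integral. Since $\xi^s=\lambda_s^{-1}\partial_{x_s}\xi$, one has the Hessian symmetry $\lambda_s\partial_{x_{s'}}\xi^s=\lambda_{s'}\partial_{x_s}\xi^{s'}$, and a short calculation shows
\[
\sum_{s\in\sS}\lambda_s a_s\,\xi^s(t\Phi(q_1))+t\sum_{s\in\sS}\lambda_s \Phi_s(q_1)\sum_{s'\in\sS}a_{s'}\partial_{x_{s'}}\xi^s(t\Phi(q_1))=\deriv{t}\lt(t\sum_{s\in\sS}\lambda_s a_s\,\xi^s(t\Phi(q_1))\rt) ,
\]
so the integral equals $\sum_{s\in\sS}\lambda_s a_s\,\xi^s(\Phi(q_1))$. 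Adding the field term and substituting $a_s=\sqrt{\Phi_s(q_1)/(\xi^s(\Phi(q_1))+h_s^2)}$ gives $\sum_{s\in\sS}\lambda_s a_s(h_s^2+\xi^s(\Phi(q_1)))=\sum_{s\in\sS}\lambda_s\sqrt{\Phi_s(q_1)(h_s^2+\xi^s(\Phi(q_1)))}$, which is the claimed value.

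The main obstacle I anticipate is the middle step: legitimately appending the family $\{t\bm^k\}_{t\in[0,1]}$ to the AMP recursion, identifying the Onsager correction $\ons^{k,t}$ correctly, and obtaining enough uniformity in $t$ to commute the $N\to\infty$ limit with the integral. Once this machinery is in place, recognizing the limiting integrand as a total $t$-derivative (via the symmetry of $\nabla^2\xi$) and assembling the final formula are routine.
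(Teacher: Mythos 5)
Your proposal is correct and follows essentially the same route as the paper: the line-integral identity $\wtH_N(\bm^k)=\int_0^1\langle\nabla\wtH_N(t\bm^k),\bm^k\rangle\,\de t$, an auxiliary AMP iterate to evaluate $\vR(\bm^k,\nabla\wtH_N(t\bm^k))$ via state evolution, the symmetry $\lambda_s\partial_{x_{s'}}\xi^s=\lambda_{s'}\partial_{x_s}\xi^{s'}$ to recognize the integrand as $\deriv{t}\bigl(t\,\xi^s(t\vR^k)\bigr)$, and then $k\to\infty$ via Lemma~\ref{lem:Rj-to-Phiq1}. The only (cosmetic) difference is that you evaluate the Onsager derivative at $t\vR^k$ rather than $\vR^k$, which is in fact the form needed for the total-derivative identity, so your bookkeeping is consistent.
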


\begin{proof}

We use the identity
\begin{equation}
  \frac{H_N(\bm^k)}{N}=\big\langle \bh,\bm^k\rangle_N+\int_0^1 \langle \bm^k,\nabla \widetilde H_N(t\bm^k)\big\rangle_N \de t
\end{equation}
and interchange the limit in probability with the integral. To compute $\plim_{N\to\infty}\langle \bm^k,\nabla \widetilde H_N(t\bm^k)\rangle$ we introduce an auxiliary AMP step 
\[
    \by^{k+1}=\nabla \widetilde H_N(t\bm^k)-
    t
    \vb_k
    \diamond
    \bm^{k-1}
\] 
which depends implicitly on $t\in [0,1]$.
Rearranging yields
\begin{align*}
  \vR(\bm^k,\nabla \widetilde H_N(t\bm^k)) 
  &= 
  \vR(\bm^k,\by^{k+1})
  +
  t\cdot
  \lt(
  \vR(\bm^k,\bm^{k-1} )
  \odot 
  \vb_k
  \rt)  
  \\
  &\simeq   
  \vR(\bm^k,\by^{k+1})
  +
  t\cdot
  \lt(
  \vR^k
  \odot 
  \vb_k
  \rt)  
  .
\end{align*}

For the first term, recalling \eqref{eq:mk-def} yields
\[
    R_s(\bm^k,\by^{k+1})
    =
    \mathbb E[a_s W^k_s Y^{k+1}_s]
    \\
    = 
    a_s
    \,
    \xi^s(t \vR^k)
    .
\]
Note also that 
\begin{equation}
    \label{eq:flip-partial}
    \lambda_s \partial_{s'}\xi^s(\vR^k)=\partial_{x_s,x_{s'}}\xi(\vR^k)=\lambda_{s'}\partial_{s}\xi^{s'}(\vR^k).
\end{equation}
Integrating with respect to $t$, and switching the roles of $s,s'$ in applying \eqref{eq:flip-partial} below, we thus find
\begin{align*} 
    \int_0^1 \langle \bm^k,\nabla \widetilde H_N(t\bm^k)\rangle_N \de t
    &\simeq 
    \sum_{s\in\sS}
    \lambda_s 
    \int_0^1
    R_s(\bm^k,\nabla \widetilde H_N(t\bm^k))
    \de t
    \\
    &\simeq
    \sum_{s\in\sS}
    \lambda_s
    \int_0^1
    \Big(
    a_s \xi^s(t\vR^k)
    +
    t R^k_s 
    \sum_{s'}
    a_{s'}\partial_{s'}\xi^s(\vR^k)
    \Big)
    ~\de t
    \\
    &\stackrel{\eqref{eq:flip-partial}}=
    \sum_{s\in\sS}
    \lambda_s
    \int_0^1
    \Big(
    a_s \xi^s(t\vR^k)
    +
    t 
    a_s  
    \sum_{s'}
    R^k_{s'}
    \partial_{s'}\xi^s(\vR^k)
    \Big)
    ~\de t
    \\
    &=
    \sum_{s\in\sS}
    \lambda_s
    a_s
    \int_0^1
    \frac{\de ~}{\de t}
    \lt(t\, \xi^s(t\, \vR^k)\rt)
    \de t
    \\
    &=
    \sum_{s\in\sS}
    \lambda_s a_s \xi^s(\vR^k).
\end{align*}
Finally the external field $\bh$ gives energy contribution
\[
  \langle \bh, \bm^k\rangle_N 
  \simeq 
  \sum_{s\in\sS}
  \lambda_s
  h_s\bbE[M^k_s]
  =
  \sum_{s\in\sS}
  \lambda_s
  a_s
  h_s^2.
\]
Since $\vR^\infty=\Phi(q_1)$ by Lemma~\ref{lem:Rj-to-Phiq1}, we conclude
\begin{align*}
  \lim_{k\to\infty} \plim_{N\to\infty}\frac{H_N(\bm^k)}{N}
  &= 
  \sum_{s\in\sS}
  \lambda_s
  a_s\big(h_s^2 + \xi^s(\Phi(q_1))\big)
  \\
  &=
  \sum_{s\in\sS}
  \lambda_s
  \sqrt{
    \Phi_s(q_1)
  \cdot
  \lt(h_s^2+\xi^s(\Phi(q_1))\rt)
  }
  .
  \qedhere
\end{align*}
\end{proof}

\subsection{Stage $\II$: Descending the Ultrametric Tree}

We now turn to the second phase which uses incremental approximate message passing. Choose a large integer $\ul$, and with $\delta=\ul^{-1}$ let
\[
    q^{\delta}_{\ell} 
    =
    q_1
    +
    (\ell-\ul)\delta,\quad \ell\geq 0.
\]
We then define
\begin{equation}
\label{eq:bn-def}
    \bn^{\ul}
    =
    \bm^{\ul}
    +
    \sqrt{\Phi(q_1+\delta)-\Phi(q_1)}\diamond \bg
\end{equation}
with the square-root taken entrywise, and $\bg\sim\cN(0,I_N)$. Then 
\begin{equation}
\label{eq:bn-overlap}
    \vR(\bn^{\ul},\bn^{\ul})\simeq \Phi(q_1+\delta)=\Phi(q^{\delta}_{\ul+1}).
\end{equation}
The point $\bn^{\ul}$ will be the ``root'' of our IAMP algorithm.\footnote{If $\vh=0$, one takes $\ul=q_1=0$, $n^{1}_i=\sqrt{\Phi_{s(i)}(\delta)}\bg_i$, and proceeds identically.}

Moreover we set $\ol=\max\{\ell\in\bbZ_+~:~q_{\ell}^{\delta}\leq 1-2\delta\}.$
We also define for $s\in\sS$ and $\ul\leq \ell\leq \ol$ the constants
\begin{equation}
\label{eq:u-def}
    u_{\ell,s}^{\delta}
    =
    \sqrt{
    \frac{\Phi_s(q^{\delta}_{\ell+1})-\Phi_s(q^{\delta}_{\ell})}
    {
    \xi^s(\Phi(q_{\ell+1}^{\delta}))
    -
    \xi^s(\Phi(q_{\ell}^{\delta}))
    }
    }
    .
\end{equation}

Set $\bz^{\ul}=\bw^{\ul}-\bh$.
We will define $(\bz^{\ell})_{\ell\geq \ul+1}$ via
\begin{equation}
\label{eq:general_amp}
\begin{aligned}
    \bz^{\ell+1} 
    &= 
    \nabla \widetilde H_N(f_{\ell}(\bz^{\ul},\cdots,\bz^\ell)) - \sum_{j=0}^\ell d_{\ell, j}\diamond f_{j-1}(\bz^{\ul},\cdots,\bz^{j-1})
    .
\end{aligned}
\end{equation}
The Onsager coefficients $d_{\ell,j}$ are given by \eqref{eq:dts-def} and will not appear explicitly in any calculations until Subsection~\ref{subsec:grad}. Note that formally, they may depend on the first $\ul$ iteratates, since \eqref{eq:general_amp} is a continuation of the same AMP iteration.
To complete the definition of the iteration \eqref{eq:general_amp}, for $s(i)=s$ and $\ell\geq \ul$ we set
\begin{equation}
\label{eq:n-def}
    f_{\ell,s}(z^{\ul}_i,\dots,z^{\ell}_i)
    =
    n^{\ell}_i,
\end{equation}
where
\begin{equation}
\label{eq:IAMP}
    \bn^{\ell+1} 
    =
    \bn^{\ell}+ 
    u_{\ell}^{\delta}
    \diamond
    \lt(\bz^{\ell+1}-\bz^{\ell}
    \rt).
\end{equation} 
The algorithm $\cA$ outputs
\begin{equation}
\label{eq:round-final-output}
    \cA(H_N)
    =
    \vR(\bn^{\ol},\bn^{\ol})^{-1/2}\diamond \bn^{\ol}
    \in\cB_N
\end{equation}
where the power $-1/2$ is taken entry-wise.
We show in \eqref{eq:final-overlap-alg} below that 
\[
    \lim_{\ul\to\infty}
    \plim_{N\to\infty}
    \|\bn^{\ol}-\cA(H_N)\|_N=0.
\]
Hence we will often not distinguish between the two and just consider $\bn^{\ol}$ to be the output. 
This makes essentially no difference by virtue of Proposition~\ref{prop:gradients-bounded}.

The state evolution limits of $\bz^\ell$ and $\bn^\ell$ are described by time-changed Brownian motions with total variance $\Phi_s(q^{\delta}_{\ell})$ in species $s$ after iteration $\ell$. This is made precise below.
\begin{lemma}
\label{lem:BMlimit}
Fix $s\in\sS$. The sequences $(Z^{\delta}_{\ul,s},Z^{\delta}_{\ul+1,s},\dots)$ and $(N^{\delta}_{\ul,s},N^{\delta}_{\ul+1,s},\dots)$ are Gaussian processes satisfying
\begin{align} 
\label{eq:BM1}
    \mathbb E[(Z^{\delta}_{\ell+1,s}-Z^{\delta}_{\ell,s})Z^{\delta}_{j,s}]
    &=
    0,\quad \text{for all }\ul+1\leq j\leq \ell
    \\
\label{eq:BM2}
    \mathbb E\big[
    (Z^{\delta}_{\ell+1,s}-Z^{\delta}_{\ell,s})^2
    \big]
    &=
    \xi^s(\Phi(q_{\ell+1}^{\delta}))
    -
    \xi^s(\Phi(q_{\ell}^{\delta}))
    \\
\label{eq:BM3}
    \mathbb E[Z^{\delta}_{\ell,s}Z^{\delta}_{j,s}]
    &=
    \xi^s(\Phi(q_{j\wedge \ell}^{\delta}))
    \\
\label{eq:BM4}
    \mathbb E[N^{\delta}_{\ell,s}N^{\delta}_{j,s}]
    &=
    \Phi_s(q^{\delta}_{(j\wedge \ell)+1})
    .
\end{align}
\end{lemma}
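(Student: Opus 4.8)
The plan is to establish \eqref{eq:BM1}--\eqref{eq:BM4} by induction on $\ell\ge\ul$, proving the four identities for all $j\le\ell$ and all species $s\in\sS$ at once. The key observation is that \eqref{eq:general_amp} is a continuation of the Stage~$\I$ recursion \eqref{eq:RSsphere}, now with the extra Gaussian vector $\bg$ of \eqref{eq:bn-def} fed into $f_{\ul}$, so the (mixed) state evolution of Proposition~\ref{prop:state_evolution}, in the form of Theorem~\ref{thm:mixedAMP}, applies to the whole chain: $(Z^\delta_{\ell,s},N^\delta_{\ell,s})_{\ell\ge\ul,\,s\in\sS}$ is a Gaussian process whose covariances are generated by the recursion \eqref{eq:state-evolution-basic}, with $\bg$ acting as an independent standard Gaussian, and each $\bz^\ell$ ($\ell\ge\ul$) is centered since $\nabla\wt H_N$ carries no external field. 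Two relations then drive the induction: the state-evolution step, which (cf.\ \eqref{eq:dts-def}) reads
\[
    \bbE[Z^\delta_{\ell+1,s}\,Z^\delta_{j+1,s}]
    =
    \xi^s\!\Big(\big(\bbE[N^\delta_{\ell,s'}\,N^\delta_{j,s'}]\big)_{s'\in\sS}\Big),
    \qquad \ell,j\ge\ul,
\]
and the defining relation $N^\delta_{\ell+1,s}=N^\delta_{\ell,s}+u^\delta_{\ell,s}\,(Z^\delta_{\ell+1,s}-Z^\delta_{\ell,s})$ read off from \eqref{eq:n-def}--\eqref{eq:IAMP}.

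For the base case $\ell=\ul$: $\bbE[(Z^\delta_{\ul,s})^2]=\xi^s(\Phi(q_1))$ is inherited from Stage~$\I$ (Lemmas~\ref{lem:RSconverge} and \ref{lem:Rj-to-Phiq1}), while \eqref{eq:bn-def}, the identity $\bbE[(M^{\ul}_s)^2]=\Phi_s(q_1)$ of \eqref{eq:id3.0}, and independence of $\bg$ give $\bbE[(N^\delta_{\ul,s})^2]=\Phi_s(q_1)+\big(\Phi_s(q_1+\delta)-\Phi_s(q_1)\big)=\Phi_s(q^\delta_{\ul+1})$. For the inductive step, assume \eqref{eq:BM1}--\eqref{eq:BM4} hold at all indices $\le\ell$. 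Feeding the inductive form of \eqref{eq:BM4} into the displayed state-evolution step gives $\bbE[Z^\delta_{\ell+1,s}Z^\delta_{j,s}]=\xi^s(\Phi(q^\delta_{j\wedge(\ell+1)}))$ for all $\ul\le j\le\ell+1$ (the boundary term $j=\ul$ also using the Stage~$\I$ cross-covariances, which involve $M^{\ul-1}$); this is \eqref{eq:BM3} at the new index, and \eqref{eq:BM1} and \eqref{eq:BM2} follow from it by taking appropriate differences. Then, since $N^\delta_{\ell,s}$ is an affine function of $Z^\delta_{\ul,s},\dots,Z^\delta_{\ell,s}$ and the auxiliary Gaussian, and the increment $Z^\delta_{\ell+1,s}-Z^\delta_{\ell,s}$ is orthogonal to all of these (by \eqref{eq:BM1}, \eqref{eq:BM3}, centeredness, and independence of $\bg$), the cross term vanishes in
\[
    \bbE[(N^\delta_{\ell+1,s})^2]
    =
    \bbE[(N^\delta_{\ell,s})^2]
    +
    (u^\delta_{\ell,s})^2\,\big(\xi^s(\Phi(q^\delta_{\ell+1}))-\xi^s(\Phi(q^\delta_\ell))\big);
\]
the definition \eqref{eq:u-def} of $u^\delta_{\ell,s}$ is exactly the choice making the added term equal $\Phi_s(q^\delta_{\ell+1})-\Phi_s(q^\delta_\ell)$, so this recursion is the discretization of the tree-descending ODE of Definition~\ref{def:pseudo-maximizer}(4) and yields \eqref{eq:BM4} at index $\ell+1$; the off-diagonal cases $j<\ell+1$ follow the same way from orthogonality of the new increment to $N^\delta_{j,s}$.

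The step I expect to be the main obstacle is controlling the two sources of approximation hidden in the equalities above, which are really to be read in the sense of $\simeq$ with an $o_{\ul}(1)$ error: the seam with Stage~$\I$, where $\vR^j$ and $\xi^s(\vR^j)$ converge to $\Phi(q_1)$ and $\xi^s(\Phi(q_1))$ as $\ul\to\infty$ but are not equal to them at finite $\ul$; and the discretization error coming from the nonlinearity of $\Phi$, which makes the telescoping above reproduce $\xi^s(\Phi(q^\delta_\ell))$ and $\Phi_s(q^\delta_{\ell+1})$ only up to $O(\delta)$ per accumulation. Showing these stay uniformly $o_{\ul}(1)$ over the $O(1/\delta)$ relevant iterations --- rather than amplifying through the repeated application of state evolution and the $\Theta(1)$ factors $(u^\delta_{\ell,s})^2$ --- is a Grönwall-type estimate that leans on well-posedness of the tree-descending ODE, and mirrors the incremental-AMP analyses of \cite{mon18,alaoui2022algorithmic,sellke2021optimizing}. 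I would isolate such an error bound, carry it through the induction, and then read off \eqref{eq:BM1}--\eqref{eq:BM4} in the regime $\ul\to\infty$ (equivalently $\delta=\ul^{-1}\to0$), the only regime in which the lemma is subsequently invoked.
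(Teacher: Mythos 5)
Your core argument coincides with the paper's proof: induction on $\ell$, the state-evolution identity $\bbE[Z^{\delta}_{\ell+1,s}Z^{\delta}_{j+1,s}]=\xi^s\big(\vR(\bn^{\ell},\bn^{j})\big)$, the exact telescoping of $R_s(\bn^{\ell},\bn^{j})$ through the increments $(u^\delta_k)^2\,\bbE[(Z^{\delta}_{k+1,s}-Z^{\delta}_{k,s})^2]$, and the observation that \eqref{eq:BM1}--\eqref{eq:BM2} follow from \eqref{eq:BM3} because the covariance depends only on $j\wedge\ell$; the base-case computation via $\bbE[N^{\delta}_{\ul,s}M^{\ul-1}_s]$ is also the one the paper uses. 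The one substantive correction concerns your closing paragraph: the ``discretization error from the nonlinearity of $\Phi$'' that you identify as the main obstacle does not exist. The coefficients $u^{\delta}_{\ell,s}$ in \eqref{eq:u-def} are defined as square roots of ratios of \emph{exact finite differences}, not as the values $f_s(q^{\delta}_\ell)$ of the continuous function \eqref{eq:f-def}; hence $(u^{\delta}_{\ell,s})^2\big(\xi^s(\Phi(q^{\delta}_{\ell+1}))-\xi^s(\Phi(q^{\delta}_{\ell}))\big)=\Phi_s(q^{\delta}_{\ell+1})-\Phi_s(q^{\delta}_{\ell})$ holds identically and the telescoping closes with zero error per step. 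No Gr\"onwall estimate is needed, and the tree-descending ODE of Definition~\ref{def:pseudo-maximizer} plays no role in this lemma (it enters only later, in the gradient computation of Proposition~\ref{prop:approx-crit-2}). The seam with Stage~$\I$ that you flag is, by contrast, genuine: the paper's base-case claim $\bbE[N^{\delta}_{\ul,s}M^{\ul-1}_s]=\Phi_s(q_1)$ is really $R^{\ul}_s$, which equals $\Phi_s(q_1)$ only as $\ul\to\infty$ (Lemma~\ref{lem:Rj-to-Phiq1}), so the equalities at indices touching $\ul$ hold only up to $o_{\ul}(1)$; the paper elides this and works in the double limit $\lim_{\ul\to\infty}\plim_{N\to\infty}$, which is the only regime in which the lemma is invoked. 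You are right to call attention to it, but it is a one-time additive error at the root, not an accumulating per-step discretization error.
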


\begin{proof}
The fact that these sequences are Gaussian processes is a general fact about state evolution (the external Gaussian $\bg$ is permitted in Theorem~\ref{thm:mixedAMP}). 
We proceed by induction on $\ell\geq \ul$. The proof is similar to \cite[Section 8]{sellke2021optimizing} so we give only the main points (in fact \eqref{eq:bn-def} simplifies the corresponding construction therein, which avoided the use of external Gaussian noise). We will make liberal use of \eqref{eq:state-evolution-basic} to connect asymptotic overlaps before and after applying $\nabla H_N(\cdot)$.

For base cases, the $\ul$ case of \eqref{eq:BM3} is immediate from \eqref{eq:id3.0}.
The base case of \eqref{eq:BM4} follows from \eqref{eq:bn-overlap}, and thus the $\ul+1$ case of \eqref{eq:BM3}.
The main computation for the base case is
\begin{align*}
    \bbE\big[\big(Z^{\delta}_{\ul+1,s}-Z^{\delta}_{\ul,s}\big)Z^{\delta}_{\ul,s}\big]
    &=
    \xi^s\lt(\{\bbE[N^{\delta}_{\ul,s}M^{\ul-1}_s]\}_{s\in\sS}\rt)
    -
    \xi^s\lt(\{\bbE[M^{\ul-1}_{s}M^{\ul-1}_s]\}_{s\in\sS}\rt)
    \\
    &=
    \xi^s(\Phi(q_1))-\xi^s(\Phi(q_1))
    \\
    &=
    0.
\end{align*}
Here we used the general AMP statement of Theorem~\ref{thm:mixedAMP} to say that 
\[
    \bbE[N^{\delta}_{\ul,s} M^{\ul-1}_s]
    =
    \bbE[M^{\ul-1}_s M^{\ul-1}_s]
    =
    \Phi_s(q_1).
\]

For inductive steps, we always have by state evolution
\[
    \bbE[Z^{\delta}_{\ell+1,s}Z^{\delta}_{j+1,s}]
    \simeq
    \xi^s\big(\vR(\bn^{\ell},\bn^{j})\big).
\]
It follows by the inductive hypothesis of \eqref{eq:BM1} that for $j\leq \ell$,
\begin{align*}
    R_s(\bn^{\ell},\bn^{j})
    &=
    R_s(\bn^{\ul},\bn^{\ul})
    +
    \sum_{k=\ul}^{j-1}
    (u_k^{\delta})^2
    R_s(\bz^{k+1}-\bz^k,\bz^{k+1}-\bz^k)
    \\
    &=
     R_s(\bn^{\ul},\bn^{\ul})
     +
     \sum_{k=\ul}^{j-1}
     (u_k^{\delta})^2
     \lt(
      \xi^s(\Phi(q_{k+1}^{\delta}))
    -
    \xi^s(\Phi(q_{k}^{\delta}))
    \rt)
    \\
    &=
    \Phi_s(q_1)
    +
    \sum_{k=\ul}^{j-1}
    \Big(
    \Phi_s(q^{\delta}_{k+1})
    -
    \Phi_s(q^{\delta}_{k})
    \Big)
    \\
    &=
    \Phi_s(q^{\delta}_j).
\end{align*}
Plugging into the above yields that for $j\leq \ell$,
\[
    \bbE[Z^{\delta}_{\ell+1,s}Z^{\delta}_{j+1,s}]
    =
    \xi^s(\Phi(q^{\delta}_j)).
\]  
This depends only on $\min(j,\ell)$, so \eqref{eq:BM1} follows. The others are proved by similar computations.
\end{proof}

Equation~\eqref{eq:BM4} implies that $\vR(\bn^{\delta}_{\ell},\bn^{\delta}_{j})\simeq \Phi(q^{\delta}_{(\ell\wedge j)+1})$, which exactly corresponds to the previous sections of the paper. In particular it implies that the final iterate $\bn^{\delta}_{\ol}$ satisfies
\begin{equation}
\label{eq:final-overlap-alg}
    (1-O(\delta))\cdot\vone\preceq \vR(\bn^{\delta}_{\ol},\bn^{\delta}_{\ol})\preceq \vone
\end{equation}
so the rounding step \eqref{eq:round-final-output} causes only an $O(\delta)$ change in the Hamiltonian value. Finally we compute in Lemma~\ref{lem:iampenergy} below the energy gain from the second phase, which matches the second term in \eqref{eq:alg-for-optimizer}. 

\begin{lemma}
\label{lem:iampenergy}
\begin{equation}
    \label{eq:iampenergy}
    \lim_{\ul\to\infty}
    \plim_{N \to \infty}
    \frac{H_{N}(\bn^{\ol})-H_{N}\lt(\bn^{\ul}\rt)}{N} 
    = 
    \sum_{s\in\sS}
  \lambda_s
  \int_{q_1}^{1} 
    \sqrt{\Phi'_s(t) (\xi^s \circ \Phi)'(t)}
    \,
    \de t
\end{equation}
\end{lemma}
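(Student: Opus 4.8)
The plan is to compute the energy increment along the incremental AMP trajectory by the same fundamental-theorem-of-calculus identity used in Lemma~\ref{lem:sphereenergy}, but now applied to the telescoping sum of the single-step increments $H_N(\bn^{\ell+1}) - H_N(\bn^\ell)$ for $\ul \le \ell < \ol$. For each such step I would write
\[
    H_N(\bn^{\ell+1}) - H_N(\bn^{\ell})
    =
    \int_0^1 \la \bn^{\ell+1} - \bn^{\ell}, \nabla H_N\big(\bn^{\ell} + t(\bn^{\ell+1}-\bn^{\ell})\big)\ra \, \de t,
\]
and then analyze the inner product via state evolution. Since $\bn^{\ell+1} - \bn^\ell = u_\ell^\delta \diamond (\bz^{\ell+1} - \bz^\ell)$ by \eqref{eq:IAMP}, the leading-order behavior involves $\la u_\ell^\delta \diamond(\bz^{\ell+1}-\bz^\ell), \nabla \wtH_N(\bn^\ell)\ra_N$ plus the external-field term $\la \bh, \bn^{\ell+1}-\bn^\ell\ra_N$ and an Onsager contribution. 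The key point is that a single IAMP step moves the overlap by only $O(\delta)$, so to leading order the interpolation point $\bn^\ell + t(\bn^{\ell+1}-\bn^\ell)$ is indistinguishable from $\bn^\ell$ itself, and the integral over $t\in[0,1]$ contributes a factor $1$ up to $O(\delta)$ corrections; the $t$-dependence only matters at second order.

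The main computation is then to evaluate $R_s(\bn^{\ell+1}-\bn^\ell, \nabla\wtH_N(\bn^\ell))$. Using \eqref{eq:IAMP} and the AMP recursion \eqref{eq:general_amp}, $\nabla \wtH_N(\bn^\ell) = \bz^{\ell+1} + \text{(Onsager)}$, so modulo the Onsager term this inner product is $u_{\ell,s}^\delta \cdot \bbE[(Z^\delta_{\ell+1,s}-Z^\delta_{\ell,s}) Z^\delta_{\ell+1,s}]$, which by \eqref{eq:BM1} and \eqref{eq:BM2} equals $u_{\ell,s}^\delta \big(\xi^s(\Phi(q^\delta_{\ell+1})) - \xi^s(\Phi(q^\delta_\ell))\big)$. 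Combining with the definition \eqref{eq:u-def} of $u_{\ell,s}^\delta$, the per-step, per-species energy gain becomes
\[
    \lambda_s \sqrt{\big(\Phi_s(q^\delta_{\ell+1})-\Phi_s(q^\delta_\ell)\big)\big(\xi^s(\Phi(q^\delta_{\ell+1}))-\xi^s(\Phi(q^\delta_\ell))\big)},
\]
which is precisely a Riemann-sum term for $\lambda_s \int \sqrt{\Phi_s'(t)(\xi^s\circ\Phi)'(t)}\,\de t$, since each factor under the square root is $\Theta(\delta)$ by the strict monotonicity of $\Phi$ and of $\xi^s\circ\Phi$ built into the pseudo-maximizer conditions and non-degeneracy. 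The Onsager term and the external-field term must be checked to contribute only lower-order corrections: the external field contributes $\sum_s \lambda_s h_s \bbE[N^\delta_{\ell+1,s}-N^\delta_{\ell,s}]$, and since $N^\delta_{\ell,s}$ is a centered Gaussian (mean zero increments), this vanishes in expectation; the Onsager term pairs $\bn^{\ell+1}-\bn^\ell$ against iterates $f_{j-1}$ with $j \le \ell$, against which $\bz^{\ell+1}-\bz^\ell$ is asymptotically orthogonal by \eqref{eq:BM1}, so it also drops out to leading order.

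Summing over $\ul \le \ell < \ol$ and over $s$, one obtains the Riemann sum $\sum_s \lambda_s \sum_\ell \sqrt{(\Delta\Phi_s)(\Delta(\xi^s\circ\Phi))}$; I would then take $\plim_{N\to\infty}$ (justified by Proposition~\ref{prop:state_evolution}, after checking the relevant test functions are pseudo-Lipschitz, exactly as in the previous lemmas) and then $\ul\to\infty$, i.e.\ $\delta\to0$, so the Riemann sum converges to the claimed integral; the endpoint effect from stopping at $\ol$ (where $q^\delta_{\ol} = 1 - O(\delta)$ rather than exactly $1$) and from the rounding step \eqref{eq:round-final-output} is $O(\delta)$ by \eqref{eq:final-overlap-alg} together with the operator-norm bound of Proposition~\ref{prop:gradients-bounded}, hence vanishes. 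I expect the main obstacle to be the careful bookkeeping of the $O(\delta)$ error terms — in particular showing that the $t$-integration in the interpolation identity, the Onsager correction, and the discrepancy between Cauchy--Schwarz-type bounds on cross terms are all uniformly $o(1)$ as $\delta\to0$ after the Riemann sum is assembled — since each individual step's error is $O(\delta^{3/2})$ or $O(\delta^2)$ and there are $\Theta(\delta^{-1})$ steps, so the accumulated error is $O(\delta^{1/2})$, which is exactly what is needed but leaves no room to be sloppy. This is the standard telescoping/Riemann-sum argument for incremental AMP (as in \cite[Section 8]{sellke2021optimizing}), adapted to the multi-species setting where each species carries its own time change $u_{\ell,s}^\delta$.
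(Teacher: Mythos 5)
Your overall strategy --- telescoping over the IAMP steps, identifying the per-step gain $u^{\delta}_{\ell,s}\,\bbE[(Z^{\delta}_{\ell+1,s}-Z^{\delta}_{\ell,s})^2]=\sqrt{\Delta\Phi_s\,\Delta(\xi^s\circ\Phi)}$ via state evolution, killing the Onsager and external-field terms by the martingale/orthogonality structure, and assembling a Riemann sum --- is the same as the paper's, and those pieces are correct. The gap is in how you linearize each increment. You write the exact identity $H_N(\bn^{\ell+1})-H_N(\bn^{\ell})=\int_0^1\la \bn^{\ell+1}-\bn^{\ell},\nabla H_N(\bn^{\ell}+t(\bn^{\ell+1}-\bn^{\ell}))\ra\,\de t$ and then replace the integrand by its value at $t=0$, asserting the $t$-dependence ``only matters at second order.'' But that second-order term does not vanish after summation: it equals $\tfrac12\la \bv,\nabla^2 H_N(\bn^{\ell})\bv\ra$ with $\bv=\bn^{\ell+1}-\bn^{\ell}$, and the only bound available from Proposition~\ref{prop:gradients-bounded} is $\tnorm{\nabla^2 H_N}_{\op}\le C_2$, giving $\tfrac{C_2}{2}\|\bv\|_2^2 = \tfrac{C_2}{2}N\delta$ per step, i.e.\ $O(\delta)$ after dividing by $N$. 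Over $\Theta(\delta^{-1})$ steps this accumulates to $O(1)$, not $O(\sqrt\delta)$ --- which contradicts your own accounting at the end, where you (correctly) state that each step's error must be $O(\delta^{3/2})$ for the argument to close. Showing the Hessian quadratic form along the increment direction is actually $o(\delta)$ would require a separate argument (the increment is correlated with $H_N$ through the iteration, so one cannot just say it is a ``fresh'' direction with vanishing normalized trace), and you do not supply one.

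The paper's fix is the trapezoid rule: writing $F(a)=\tfrac1N\wtH_N((1-a)\bn^{\ell}+a\bn^{\ell+1})$ and using $F(1)-F(0)=\tfrac12(F'(0)+F'(1))+O(\sup_a|F'''(a)|)$, the second-order term cancels identically because it is re-expressed through $F'(1)-F'(0)$, leaving only a third-derivative error $O(\underline{C}\|\bn^{\ell+1}-\bn^{\ell}\|_N^3)=O(\delta^{3/2})$ per step, hence $O(\sqrt\delta)$ total. This forces you to evaluate $\la\nabla\wtH_N(\bn^{\ell})+\nabla\wtH_N(\bn^{\ell+1}),\bn^{\ell+1}-\bn^{\ell}\ra_N$ rather than the one-sided gradient, which costs nothing: by \eqref{eq:BM1} both endpoint gradients pair with the increment to give the same value $\bbE[(Z^{\delta}_{\ell+1,s}-Z^{\delta}_{\ell,s})^2]$, so the factor $\tfrac12$ and the factor $2$ cancel and you recover exactly the Riemann-sum term you computed. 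With that replacement your proof goes through; without it, the argument as written does not control the energy to the required accuracy.
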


\begin{proof}
Observe that $\langle h,\bn^{\ol}-\bn^{\ul}\rangle_N\simeq 0$ because the values $(N_{\ell,s}^{\delta})_{\ell\geq\ul}$ form a martingale sequence for each $s\in\sS$. 
Therefore it suffices to find the in-probability limit of $\frac{\widetilde{H}_{N}(\bn^{\ol})-\widetilde{H}_{N}(\bn^{\ul})}{N}$. We write
\[
    \frac{\widetilde{H}_{N}(\bn^{\ol})-\widetilde{H}_{N}(\bn^{\ul})}{N}=\sum_{\ell=\ul}^{\ol-1}\frac{\widetilde{H}_{N}(\bn^{\ell+1})-\widetilde{H}_{N}(\bn^{\ell})}{N}
\]
and use a Taylor series approximation for each term. In particular for $F\in C^3(\mathbb R;\bbR)$, applying Taylor's approximation theorem twice yields
\begin{align*}
    F(1)-F(0)
    &=
    F'(0)+\frac{1}{2}F''(0)+O\big(\sup_{a\in [0,1]}|F'''(a)|\big)
    \\
    &= 
    F'(0)+\frac{1}{2}(F'(1)-F'(0))+O\big(\sup_{a\in [0,1]}|F'''(a)|\big)
    \\
    &=
    \frac{1}{2}(F'(1)+F'(0))+O\big(\sup_{a\in [0,1]}|F'''(a)|\big) .
\end{align*}
Assuming $\sup_{\ell} \norm{\bn^\ell}_N \leq 1$, which holds with probability $1-o_N(1)$ by state evolution and the definition of $\ol$, we apply this estimate with 
\[
    F(a)= \fr1N \widetilde{H}_N\lt((1-a)\bn^{\ell}+a\bn^{\ell+1}\rt).
\]
The result is:
\begin{align*}
    \fr1N
    \lt|
    \widetilde{H}_{N}
    (\bn^{\ell+1})-\widetilde{H}_{N}(\bn^{\ell}) -\frac{1}{2}\lt\langle \nabla \widetilde{H}_N(\bn^{\ell})+\nabla \widetilde{H}_N(\bn^{\ell+1}),\bn^{\ell+1}-\bn^{\ell}\rt\rangle \rt|
    &\leq 
    O\lt(
    \underline{C}
    \|\bn^{\ell+1}-\bn^{\ell}\|_N^3
    \rt)
    ;
    \\
    \underline{C}N^{-1/2}
    &=
    \sup_{\|\bsig\|\leq \sqrt{N}}\lt\|\nabla^3 \widetilde{H}_N(\bsig)\rt\|_{\op}
    .
\end{align*}
Proposition~\ref{prop:gradients-bounded} implies that for deterministic constants $c,C$,
\[
    \bbP[\underline{C}\leq C]\geq 1-e^{-cN}.
\]
On the other hand for each $\ul\leq \ell\leq \ol-1$ we have
\begin{align*}
    \plim_{N\to\infty}\|\bn^{\ell+1}-\bn^{\ell}\|_N &=
    \sqrt{
    \sum_{s\in\sS}\lambda_s R_s(\bn^{\ell+1}-\bn^{\ell},\bn^{\ell+1}-\bn^{\ell})
    }
    \\
    &=
    \sqrt{
    \sum_{s\in\sS}\lambda_s
    \big(
    \Phi_s(q^{\delta}_{\ell+2})-\Phi_s(q^{\delta}_{\ell+1})
    \big)
    }
    \\
    &=
    \sqrt{\delta}.
\end{align*}
Summing and noting that $\ol-\ul\leq \delta^{-1}$ yields the high-probability estimate
\begin{align*}
  \sum_{\ell=\ul}^{\ol-1}
  &
  \fr1N 
  \lt|
  \widetilde{H}_{N}(\bn^{\ell+1})
  -
  \widetilde{H}_{N}(\bn^{\ell}) 
  -
  \frac{1}{2}\lt\langle  \nabla \widetilde{H}_N(\bn^{\ell})+\nabla \widetilde{H}_N(\bn^{\ell+1}),\bn^{\ell+1}-\bn^{\ell}\rt\rangle  
  \rt|
  \\ 
  &\leq 
  \sum_{\ell=\ul}^{\ol-1} 
  \|\bn^{\ell+1}-\bn^{\ell}\|_N^3
  \leq O(\sqrt{\delta}).
\end{align*}
So, this term vanishes as $\delta \to 0$. It remains to prove
\[
  \lim_{\delta\to 0}
  \plim_{N \to \infty}
  \sum_{\ell=\ul}^{\ol-1}
  \lt\langle 
  \nabla \widetilde{H}_N(\bn^{\ell})
  +
  \nabla \widetilde{H}_N(\bn^{\ell+1})
  ,
  \bn^{\ell+1}-\bn^{\ell}
  \rt\rangle_N  
  \stackrel{?}{=}
  2
  \sum_{s\in\sS}
  \lambda_s
  \int_{q_1}^{1} 
    \sqrt{\Phi'_s(t) (\xi^s \circ \Phi)'(t)}
  \de t.
\]
To establish this it suffices to show for each species $s\in\sS$ the equality
\begin{equation}
\label{eq:IAMP-energy-per-species}
  \lim_{\delta\to0}
  \plim_{N \to \infty}
  \sum_{\ell=\ul}^{\ol-1}
  R_s\lt(
  \nabla \widetilde{H}_N(\bn^{\ell})
  +
  \nabla \widetilde{H}_N(\bn^{\ell+1})
  ,
  \bn^{\ell+1}-\bn^{\ell}
  \rt)
  \stackrel{?}{=}
  2
  \int_{q_1}^{1} 
    \sqrt{\Phi'_s(t) (\xi^s \circ \Phi)'(t)}
  \de t.
\end{equation}
Observe by \eqref{eq:general_amp} that
\begin{equation}
\label{eq:amprearrange}
    \nabla \widetilde{H}_N(\bn^{\ell})
    =
    \bz^{\ell+1}+\sum_{j=0}^\ell d_{\ell, j}\diamond \bn^{j-1}.
\end{equation}
Passing to the limiting Gaussian process $(Z^{\delta}_k)_{k\in\mathbb Z^+}$ via state evolution,
\begin{align*}
  \plim_{N\to\infty}
  R\lt(\nabla \widetilde{H}_N(\bn^{\ell}),\bn^{\ell+1}-\bn^{\ell}\rt)_s
  &=
  \mathbb E\lt[ Z^{\delta}_{\ell+1,s}(N^{\delta}_{\ell+1,s}-N^{\delta}_{\ell,s})\rt]
  +
  \sum_{j=0}^{\ell}
  d_{\ell,j,s}
  \mathbb E\lt[
  N^{\delta}_{j-1,s}(N^{\delta}_{\ell+1,s}-N^{\delta}_{\ell,s})
  \rt],
  \\
  \plim_{N\to\infty}
  R\lt(\nabla \widetilde{H}_N(\bn^{\ell+1}),\bn^{\ell+1}-\bn^{\ell}\rt)_s
  &=
  \mathbb E\lt[ Z^{\delta}_{\ell+2,s}(N^{\delta}_{\ell+1,s}-N^{\delta}_{\ell,s})\rt]
  +
  \sum_{j=0}^{\ell+1} 
  d_{\ell+1,j,s}
  \mathbb E\lt[
  N^{\delta}_{j-1}(N^{\delta}_{\ell+1,s}-N^{\delta}_{\ell,s})
  \rt].
\end{align*}

As $(N^{\delta}_k)_{k\geq \mathbb Z^+}$ is a martingale process, it follows that all right-most expectations vanish. Similarly it holds that
\begin{align*}
  \mathbb E[Z_{\ell+2}^{\delta}(N_{\ell+1}^{\delta}-N_{\ell}^{\delta})]&=\mathbb E[Z_{\ell+1}^{\delta}(N_{\ell+1}^{\delta}-N_{\ell}^{\delta})]
  \\
  \mathbb E[Z_{\ell}^{\delta}(N_{\ell+1}^{\delta}-N_{\ell}^{\delta})]&=0.
\end{align*}
We conclude that
\begin{align*}
  \plim_{N\to\infty}
  R\lt(\nabla \widetilde{H}_N(\bn^{\ell})+\nabla \widetilde{H}_N(\bn^{\ell+1}),\bn^{\ell+1}-\bn^{\ell}\rt)_s
  &=
  2\,\mathbb E[
  (Z_{\ell+1,s}^{\delta}-Z^{\delta}_{\ell,s})(N_{\ell+1,s}^{\delta}-N_{\ell,s}^{\delta})
  ]
  \\
  &=
  2\,\mathbb E[u_{\ell,s}^{\delta}(Z^{\delta}_{\ell,s})(Z_{\ell+1,s}^{\delta}-Z^{\delta}_{\ell,s})^2]
  \\
  &=
  2\,\mathbb E[u_{\ell,s}^{\delta}(Z^{\delta}_{\ell,s})]
  \cdot
  \bbE[(Z_{\ell+1,s}^{\delta}-Z^{\delta}_{\ell,s})^2]
  \\
&=
  2\,\sqrt{
  \Big(
  \Phi_s(q^{\delta}_{\ell+1,s})-\Phi_s(q^{\delta}_{\ell,s})
  \Big)
  \cdot
    \Big(\xi^s(\Phi(q_{\ell+1}^{\delta}))-\xi^s(\Phi(q_{\ell}^{\delta}))
    \Big)
    }
    .
\end{align*}
In the second-to-last step we used independence of $Z^{\delta}_{\ell,s}$ increments, which follows from Lemma~\ref{lem:BMlimit}, while the last step used \eqref{eq:u-def} and \eqref{eq:BM2}.
Combining with \cite[Lemma 3.7]{huang2023algorithmic} on discrete approximation of the integral in $\bbA$ implies \eqref{eq:IAMP-energy-per-species}.
\end{proof}

\begin{proof}[Proof of Theorem~\ref{thm:main-alg}]
We take $\cA$ as in \eqref{eq:round-final-output} for $\ul$ a large constant depending on $(\eps,\xi,h,\lambda)$. First,
\begin{equation}
\label{eq:1-o1-prob-alg}
    \bbP[H_N(\cA(H_N))/N \geq \ALG-\eps/2]
    \ge 
    1-o_N(1)
\end{equation}
follows from combining Lemma~\ref{lem:sphereenergy}, Lemma~\ref{lem:iampenergy} and the fact that (recall \eqref{eq:final-overlap-alg}) 
\[
H_N(\cA(H_N))/N \simeq H_N(\bn^{\ul})/N+o_{\bbP}(1).
\]

Next, let $K_N\subseteq\sH_N$ be as in Proposition~\ref{prop:gradients-bounded}. We recall that $\bbP[H_N\in K_N]\geq 1-e^{-cN}$.
Exactly as in \cite[Theorem 10]{huang2021tight} it follows that there is a $C(\eps)$-Lipschitz function $\wt\cA:\sH_N\to\bbR$ such that $\wt\cA$ and $\cA$ agree on $K_N$.
Moreover \eqref{prop:gradients-bounded} and concentration of measure on Gaussian space imply that $H_N(\wt\cA(H_N))$ is $O(N^{1/2})$-sub-Gaussian. In light of \eqref{eq:1-o1-prob-alg} and since $\bbP[\wt\cA(H_N)=\cA(H_N)]\geq \bbP[H_N\in K_N]\geq 1-e^{-cN}$, we deduce that 
\[
    \bbP[H_N(\cA(H_N))/N \geq \ALG-\eps]
    \ge 
    1-e^{-cN}.
\]
This concludes the proof.
\end{proof}

\section{Extensions}
\label{sec:branch}

\subsection{Signed AMP}
\label{subsec:signed}

\revedit{In our companion paper \cite{huang2023strong}, we show that strictly super-solvable models have w.h.p. exactly $2^r$ critical points, indexed by sign patterns $\vDelta \in \{\pm 1\}^r$ with the following physical meaning. 
Consider first the extreme case of a linear Hamiltonian, with external field $\bh = \vh \diamond \bone$ where all entries of $\vh$ are nonzero and \textbf{no other interactions}.
This model clearly has $2^r$ critical points, which are the products of the maxima and minima in the spheres $\{\norm{\bx_s}_2^2 = \lambda_s N\}$ corresponding to each species $s\in \sS$, and the signs $\vDelta$ record whether the critical point is a maximum or minimum in each species.
As explained in \cite[Section 6.6]{huang2023strong}, if a strictly super-solvable $H_N$ is gradually deformed to a linear function (staying inside the strictly super-solvable phase), the critical points move stably, and over this process their Hessian eigenvalues do not cross zero.
Thus, each critical point of $H_N$ can also be associated with a sign pattern $\vDelta$.}

\revedit{We now show that the root-finding algorithm defined in Subsection~\ref{subsec:root-finding} can be generalized to find all $2^r$ critical points in a strictly super-solvable model. More precisely, it finds $2^r$ approximate critical points, one in a neighborhood of each exact critical point of the model, from which the exact critical points can be computed by Newton's method (see Remark~\ref{rmk:supersolvable-unique-crit-nearby}). 
For general models, it finds $2^r$ approximate critical points on the product of spheres with self-overlap $\Phi(q_1)$. The restriction of $H_N$ to this set, considered as a spin glass in its own right (see \cite[Remark 1.2]{huang2023algorithmic}) is a solvable model.}

% The root-finding algorithm defined in Subsection~\ref{subsec:root-finding} can be generalized to any signs $\vDelta\in\{\pm 1\}^r$. 
% Fixing such a $\vDelta$, the analogous iteration to \eqref{eq:RSsphere} is:
Fixing $\vDelta \in \{\pm 1\}^r$, the analogous iteration to \eqref{eq:RSsphere} is:
\begin{equation}
\label{eq:signed-AMP}
\begin{aligned}
    \bw^{k+1}
    &=
    \nabla H_N(\bm^k)
    -
    \vb_k
    \diamond
    \bm^{k-1}
    \\
    &=
    \bh
    +
    \nabla \wtH_N(\bm^k)
    -
    \vb_k(\vDelta)
    \diamond
    \bm^{k-1};
    \\
    \bm^k
    &=
    \vDelta\odot \va\diamond \bw^k
    \\
    b_{k,s}(\vDelta)
    &\equiv
    \sum_{s'\in\sS}
    \Delta_{s'}
    a_{s'}
    \partial_{s'}\xi^s
    \big(\vR(\bm^k,\bm^{k-1})\big)
    .
\end{aligned}
\end{equation}
The change of sign does not affect the proofs or statements of Lemmas~\ref{lem:RSconverge},~\ref{lem:Rj-to-Phiq1}. Indeed $a_s^2$ only changes to $\Delta_s^2 a_s^2$ in the former proof which is no change at all. The generalization of Lemma~\ref{lem:sphereenergy} is as follows.
\begin{lemma}
\label{lem:sphereenergy-signed}
\[
  \lim_{k\to\infty} \plim_{N\to\infty}\frac{H_N(\bm^k)}{N}
  = 
    \sum_{s\in \sS}
    \lambda_s
    \Delta_s
      \sqrt{
      \Phi_s(q_1)
      \lt(h_s^2+\xi^s(\Phi(q_1))\rt)
      }
    \,.
\]
\end{lemma}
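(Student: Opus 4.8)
The plan is to mimic the proof of Lemma~\ref{lem:sphereenergy} essentially verbatim, tracking the single sign vector $\vDelta$ through the computation. Since the authors already note that Lemmas~\ref{lem:RSconverge} and~\ref{lem:Rj-to-Phiq1} are unaffected (because the sign enters only as $\Delta_s^2 = 1$ in the state-evolution variance recursion), we still have $\vR(\bm^k,\bm^{k-1})\simeq\vR^k$ with $\vR^k\to\Phi(q_1)$. The only change is that now $M^j_s = \Delta_s a_s W^j_s$ rather than $a_s W^j_s$, so every appearance of $a_s$ as a \emph{linear} coefficient of a first-iterate quantity picks up a factor $\Delta_s$, while quadratic-in-$a_s$ quantities are unchanged.

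Concretely, I would reuse the decomposition
\[
  \frac{H_N(\bm^k)}{N} = \langle \bh,\bm^k\rangle_N + \int_0^1 \langle \bm^k,\nabla\wtH_N(t\bm^k)\rangle_N\,\de t,
\]
and introduce the same auxiliary iterate $\by^{k+1} = \nabla\wtH_N(t\bm^k) - t\,\vb_k(\vDelta)\diamond\bm^{k-1}$. The rearrangement $\vR(\bm^k,\nabla\wtH_N(t\bm^k)) \simeq \vR(\bm^k,\by^{k+1}) + t(\vR^k\odot\vb_k(\vDelta))$ is formally identical. For the first term, $R_s(\bm^k,\by^{k+1}) = \bbE[\Delta_s a_s W^k_s Y^{k+1}_s] = \Delta_s a_s\,\xi^s(t\vR^k)$, picking up one factor of $\Delta_s$. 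For the Onsager term, $b_{k,s}(\vDelta) = \sum_{s'}\Delta_{s'}a_{s'}\partial_{s'}\xi^s(\vR^k)$, so after applying the symmetry relation \eqref{eq:flip-partial} to swap $s\leftrightarrow s'$ exactly as in the original proof, the contribution becomes $\sum_{s'} R^k_{s'}\Delta_s a_s\partial_{s'}\xi^s(\vR^k)$ — again exactly one overall factor $\Delta_s$ relative to the unsigned case. Thus the $t$-integral telescopes in the same way to $\sum_{s\in\sS}\lambda_s \Delta_s a_s\,\xi^s(\vR^k)$, and the external-field term is $\langle\bh,\bm^k\rangle_N \simeq \sum_s \lambda_s h_s\bbE[M^k_s] = \sum_s \lambda_s h_s \Delta_s a_s h_s = \sum_s \lambda_s \Delta_s a_s h_s^2$. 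Adding these and sending $k\to\infty$ (so $\vR^k\to\Phi(q_1)$) gives $\sum_s\lambda_s\Delta_s a_s(h_s^2+\xi^s(\Phi(q_1)))$, and substituting $a_s = \sqrt{\Phi_s(q_1)/(\xi^s(\Phi(q_1))+h_s^2)}$ yields the claimed $\sum_s\lambda_s\Delta_s\sqrt{\Phi_s(q_1)(h_s^2+\xi^s(\Phi(q_1)))}$.

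There is no real obstacle here: the computation is robust because $\vDelta$ enters every energy-contributing term \emph{linearly} (the sign vector multiplies $\bw^k$ to form $\bm^k$, and the energy is, roughly, linear in $\bm^k$ given the disorder), while the overlap recursion that drives the fixed-point analysis sees only $\Delta_s^2$. The one place to be slightly careful is the application of \eqref{eq:flip-partial} in the Onsager term: one must check that moving the $\Delta_{s'}$ along with the index swap produces a common factor $\Delta_s$ that can be pulled out of the sum, which it does because the swap only relabels the summation index $s'$ and leaves $\Delta_s$ outside. I would therefore present the proof as "Following the proof of Lemma~\ref{lem:sphereenergy} with $\bm^k = \vDelta\odot\va\diamond\bw^k$, each energy term acquires exactly one factor of $\Delta_s$ in species $s$," and then display the modified telescoping identity and the final substitution.
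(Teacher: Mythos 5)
Your proposal is correct and follows the paper's own proof essentially verbatim: the paper likewise reduces to the proof of Lemma~\ref{lem:sphereenergy}, tracks the single factor $\Delta_s$ (respectively $\Delta_{s'}$, which becomes $\Delta_s$ after the index swap via \eqref{eq:flip-partial}) through the integral and the external-field term, and telescopes to $\sum_s \lambda_s \Delta_s a_s\bigl(h_s^2+\xi^s(\Phi(q_1))\bigr)$. Your explicit check that the relabeling in \eqref{eq:flip-partial} carries $\Delta_{s'}$ into a common factor $\Delta_s$ is exactly the one nontrivial point, and you handle it correctly.
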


\begin{proof}
The proof is similar to Lemma~\ref{lem:sphereenergy}. The main calculation now becomes:
\begin{align*} 
    \int_0^1 \langle \bm^k,\nabla \widetilde H_N(t\bm^k)\rangle_N \de t
    &\simeq 
    \sum_{s\in\sS}
    \lambda_s 
    \int_0^1
    R_s(\bm^k,\nabla \widetilde H_N(t\bm^k))
    \de t
    \\
    &\simeq
    \sum_{s\in\sS}
    \lambda_s
    \int_0^1
    \Big(
    \Delta_s a_s \xi^s(t\vR^k)
    +
    t R^k_s 
    \sum_{s'\in\sS}
    \Delta_{s'} 
    a_{s'}\partial_{s'}\xi^s(\vR^k)
    \Big)
    ~\de t
    \\
    &\stackrel{\eqref{eq:flip-partial}}=
    \sum_{s\in\sS}
    \lambda_s
    \int_0^1
    \Big(
    \Delta_s 
    a_s \xi^s(t\vR^k)
    +
    t 
    \Delta_s a_s  
    \sum_{s'\in\sS}
    R^k_{s'}
    \partial_{s'}\xi^s(\vR^k)
    \Big)
    ~\de t
    \\
    &=
    \sum_{s\in\sS}
    \lambda_s
    \Delta_s
    a_s
    \int_0^1
    \frac{\de ~}{\de t}
    \lt(t\, \xi^s(t\, \vR^k)\rt)
    \de t
    \\
    &=
    \sum_{s\in\sS}
    \lambda_s
    \Delta_s 
    a_s \xi^s(\vR^k).
\end{align*}
Moreover the external field $\bh$ now contributes energy
\[
  \langle \bh, \bm^k\rangle_N 
  \simeq 
  \sum_{s\in\sS}
  \lambda_s
  h_s\bbE[M^k_s]
  =
  \sum_{s\in\sS}
  \lambda_s
  \Delta_s 
  a_s
  h_s^2.
\]
Combining gives the desired statement.
\end{proof}

\begin{remark}
\label{rem:2-r-crits-relation}
    One can sign the IAMP phase as well by redefining \eqref{eq:IAMP} to
    \begin{equation}
    \label{eq:IAMP-signed}
    \bn^{\ell+1}(\vDelta)
    =
    \bn^{\ell}(\vDelta)
    + 
    \vDelta
    \odot
    u_{\ell}^{\delta}
    \diamond
    (\bz^{\ell+1}-\bz^{\ell}).
    \end{equation}
    The resulting output $\bn^{\ol}(\vDelta)$ then achieves asymptotic energy (recall \eqref{eq:alg-for-optimizer})
    \begin{equation}
    \label{eq:iamp-signed-energy}
    \lim_{\ul\to\infty}
    \plim_{N \to \infty}
    \frac{H_N
    \big(\bn^{\ol}(\vDelta)\big)}
    {N} 
    =
    \sum_{s\in \sS}
    \lambda_s \Delta_s\lt[
        \sqrt{\Phi_s(q_1) (\xi^s(\Phi(q_1)) + h_s^2)}  + 
        \int_{q_1}^1 \sqrt{\Phi'_s(q)(\xi^s\circ \Phi)'(q)}~\de q
    \rt].
    \end{equation}
    However it is unclear whether $\bn^{\ol}(\vDelta)$ can be made to obey any notable properties. 
    We will show that the signed outputs $\bm^k(\vDelta)$ of the first phase above are approximate critical points for $H_N$ (and in \cite{huang2023strong} that all near-critical points are close to one of them). 
    By contrast, for the output of signed IAMP to be a critical point, $\Phi$ must satisfy a signed version of the tree-descending ODE \eqref{eq:tree-descending-ode} in which the function $(\xi^s \circ \Phi)'(q)$ is replaced by
    \[
    \sum_{s'\in\sS}
    \Delta_{s'}
    \partial_{s'}\xi^s(\Phi(q))
    \Phi_s'(q).
    \]
    Since this quantity appears inside a square root in \eqref{eq:tree-descending-ode}, it is unclear when to expect solutions to exist. Furthermore the proof in \cite{huang2023algorithmic} of well-posedness relies on positivity of coefficients (via Perron-Frobenius theory) and does not seem to generalize. 
    \revedit{Additionally, a solution would not seem to correspond to a maximizer of any variational problem as in \eqref{eq:alg-for-optimizer}. As a result we do not know how to prove a solution exists in the signed case.}
    However if one takes \emph{as given} a smooth function $\Phi$ satisfying the signed tree-descending ODE, the iteration \eqref{eq:IAMP-signed} starting from signed initialization $\bn^{\ul}(\vDelta)=\bm^{\ul}(\vDelta)+
    \sqrt{\Phi(q_1+\delta)-\Phi(q_1)}\diamond \bg$ would produce an approximate critical point $\bn^{\ol}(\vDelta)$ which still satisfies \eqref{eq:iamp-signed-energy}.
\end{remark}

\subsection{Gradient Computation and Connection to $E_{\infty}$}
\label{subsec:grad}

We now compute the gradient of the outputs, showing that $\bm^{\ul}(\vDelta)$ and $\bn^{\ell}$ ($\ul \le \ell \le \ol$) are approximate critical points for the restriction of $H_N$ to the products of $r$ spheres with suitable radii passing through them.
For $\bsig$ to be an approximate critical point means precisely that there exist coefficients $\vA \in \bbR^r$ such that
\[
    \|\nabla H_N(\bsig)
    -
    \vA\diamond \bsig\|_N
    \simeq 0.
\]
In our case, these coefficients will be given as follows.
If $\vone$ is strictly sub-solvable (so $q_1<1$), define $\vA(q)$ for $q\in [q_1,1]$ by
\begin{align}
    \label{eq:as-def}
    A_s(q)
    &\equiv
    f_s(q)^{-1}
    +\sum_{s'\in\sS}
    f_{s'}(q)
    \partial_{s'}\xi^s\big(\Phi(q)\big),
    \\
    \label{eq:f-def}
    f_s(q)
    &\equiv
    \sqrt{\fr{\Phi'_s(q)}{(\xi^s \circ \Phi)'(q)}}.
\end{align}  
Further define for $\vDelta \in \{-1,1\}^r$
\begin{equation}
\label{eq:as-alt-formula}
    A_s(q_1;\vDelta)
    \equiv
    \Delta_s
    \sqrt{\frac{\xi^s(\Phi(q_1))+h_s^2}{\Phi_s(q_1)}}
    +
    \sum_{s'\in\sS}
    \Delta_{s'}
    \partial_{s'}\xi^s\big(\Phi(q_1)\big)
    \sqrt{
    \frac{\Phi_{s'}(q_1)}
    {\xi^{s'}(\Phi(q_1))+h_{s'}^2}
    }.
\end{equation}
Note that, by \eqref{eq:Phip-q1}, this is consistent with the definition of $\vA(q_1)$ above, in the sense that $\vA(q_1;\vone) = \vA(q_1)$.
We take this to be the definition of $\vA(q_1)$ if $\vone$ is super-solvable (and $q_1=1$).
\begin{proposition}
\label{prop:approx-crit-1}
    If $\Phi$ is a pseudo-maximizer for $\bbA$ \revedit{(recall Definition~\ref{def:pseudo-maximizer})} then for any $\vDelta\in\{\pm 1\}^r$,
    \begin{equation}
    \label{eq:approx-crit-1}
    \lim_{\ul\to\infty}
    \plim_{N\to\infty}
    \|\nabla H_N(\bm^{\ul}(\vDelta))-
    \vA(q_1,\vDelta) 
    \diamond
    \bm^{\ul}(\vDelta)\|_N
    =
    0
    .
    \end{equation}
\end{proposition}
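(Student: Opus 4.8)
The plan is to read off $\nabla H_N(\bm^{\ul}(\vDelta))$ directly from the AMP recursion by running \eqref{eq:signed-AMP} one step past $\ul$, and then use the state evolution of the first phase to take $\ul\to\infty$. From the first line of \eqref{eq:signed-AMP}, $\bw^{\ul+1} = \nabla H_N(\bm^{\ul}) - \vb_{\ul}(\vDelta)\diamond\bm^{\ul-1}$; inverting the relation $\bm^k = (\vDelta\odot\va)\diamond\bw^k$ (legitimate since each $a_s>0$ and $\Delta_s=\pm1$) gives $\bw^{\ul+1} = (\vDelta\odot\va^{-1})\diamond\bm^{\ul+1}$, where $\va^{-1}$ has entries $a_s^{-1}$. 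Hence
\[
\nabla H_N(\bm^{\ul}(\vDelta)) = (\vDelta\odot\va^{-1})\diamond\bm^{\ul+1}(\vDelta) + \vb_{\ul}(\vDelta)\diamond\bm^{\ul-1}(\vDelta),
\]
so it suffices to show the right-hand side is asymptotically $\vA(q_1,\vDelta)\diamond\bm^{\ul}(\vDelta)$ in the double limit $\plim_{N\to\infty}$ then $\ul\to\infty$.

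Next I would invoke the signed analogues of Lemma~\ref{lem:RSconverge} and Lemma~\ref{lem:Rj-to-Phiq1} (which hold verbatim, as noted after \eqref{eq:signed-AMP}) to control each piece. First, $\vR(\bm^{\ul},\bm^{\ul-1})\to\Phi(q_1)$ by \eqref{eq:id4.0} and Lemma~\ref{lem:Rj-to-Phiq1}, and $\partial_{s'}\xi^s$ is a polynomial, so $\vb_{\ul}(\vDelta)\to\vb_\infty(\vDelta)$ with $b_{\infty,s}(\vDelta) = \sum_{s'\in\sS}\Delta_{s'}a_{s'}\partial_{s'}\xi^s(\Phi(q_1))$. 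Second, using \eqref{eq:id3.0}--\eqref{eq:id4.0} one computes $\plim_{N\to\infty}\|\bm^{\ul+1}-\bm^{\ul}\|_N^2 = 2\sum_{s\in\sS}\lambda_s(\Phi_s(q_1)-R^{\ul+1}_s)$ and similarly $\plim_{N\to\infty}\|\bm^{\ul-1}-\bm^{\ul}\|_N^2 = 2\sum_{s\in\sS}\lambda_s(\Phi_s(q_1)-R^{\ul}_s)$, both of which tend to $0$ as $\ul\to\infty$ since $\vR^j\to\Phi(q_1)$. Since $\vDelta\odot\va^{-1}$ and the vectors $\vb_{\ul}(\vDelta)$ lie in $\bbR^\sS$ with entries bounded uniformly in $\ul$ and $N$, and diamond-multiplication by such a vector scales each coordinate by a bounded factor, we get $\|(\vDelta\odot\va^{-1})\diamond(\bm^{\ul+1}-\bm^{\ul})\|_N\simeq 0$, $\|\vb_{\ul}(\vDelta)\diamond(\bm^{\ul-1}-\bm^{\ul})\|_N\simeq 0$, and $\|(\vb_{\ul}(\vDelta)-\vb_\infty(\vDelta))\diamond\bm^{\ul}\|_N\simeq 0$ (the last since $\|\bm^{\ul}\|_N$ is bounded, again by \eqref{eq:id3.0}). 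Substituting gives
\[
\nabla H_N(\bm^{\ul}(\vDelta)) \simeq \big(\vDelta\odot\va^{-1} + \vb_\infty(\vDelta)\big)\diamond\bm^{\ul}(\vDelta).
\]

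It then remains to identify the coefficient vector with \eqref{eq:as-alt-formula}. Using $a_s = \sqrt{\Phi_s(q_1)/(\xi^s(\Phi(q_1))+h_s^2)}$ we have $(\vDelta\odot\va^{-1})_s = \Delta_s\sqrt{(\xi^s(\Phi(q_1))+h_s^2)/\Phi_s(q_1)}$ and $\Delta_{s'}a_{s'} = \Delta_{s'}\sqrt{\Phi_{s'}(q_1)/(\xi^{s'}(\Phi(q_1))+h_{s'}^2)}$, so $(\vDelta\odot\va^{-1})_s + b_{\infty,s}(\vDelta)$ is exactly $A_s(q_1;\vDelta)$ as written in \eqref{eq:as-alt-formula}. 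This argument never used $q_1<1$, so it covers the super-solvable case $q_1=1$, $\Phi(q_1)=\vone$ as well.

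The one point requiring genuine care — which I expect to be the main obstacle — is the order of limits: the state-evolution identities of Lemma~\ref{lem:RSconverge} are statements about each fixed iteration count as $N\to\infty$, so one must first fix $\ul$ (hence also $\ul+1$), apply Proposition~\ref{prop:state_evolution} (in the mixed-AMP form permitting the relevant auxiliary structure) to obtain the in-probability limits of all the overlaps above and of $\|\bm^{\ul\pm1}-\bm^{\ul}\|_N$, and only afterwards send $\ul\to\infty$ using $\vR^j\to\Phi(q_1)$. The uniform boundedness of $\|\bm^{\ul}\|_N$ and of the vectors $\va^{-1},\vb_{\ul}(\vDelta)$ — which is what makes the diamond-error terms vanish in the double limit — should be read off from these same state-evolution limits (with Proposition~\ref{prop:gradients-bounded} available to bound $\nabla H_N$ if needed). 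Everything else is routine bookkeeping.
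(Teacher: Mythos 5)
Your proposal is correct and follows essentially the same route as the paper's proof: rearrange the signed AMP recursion \eqref{eq:signed-AMP} to express $\nabla H_N(\bm^{\ul}(\vDelta))$ in terms of $\bm^{\ul+1}$ and $\bm^{\ul-1}$, use the state-evolution identities \eqref{eq:id3.0}--\eqref{eq:id4.0} together with Lemma~\ref{lem:Rj-to-Phiq1} to show consecutive iterates coincide in the double limit $\plim_{N\to\infty}$ then $\ul\to\infty$, and identify $\vDelta\odot\va^{-1}+\vb_\infty(\vDelta)$ with \eqref{eq:as-alt-formula}. The paper states this more tersely; your explicit computation of $\|\bm^{\ul\pm1}-\bm^{\ul}\|_N^2$ and your care about the order of limits are exactly the details the paper leaves implicit.
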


\begin{proof}
    Recall from Lemma~\ref{lem:RSconverge} (which holds without modification for general $\vDelta$) that
    \begin{equation}
    \label{eq:recall-amp-convergence}
    \lim_{\ul\to\infty}\plim_{N\to\infty} 
    \|\bm^{\ul+1}(\vDelta)-\bm^{\ul}(\vDelta)\|_N 
    =0.
    \end{equation}
    Thus rearranging \eqref{eq:signed-AMP} yields 
    \[
    \lim_{k\to\infty}\plim_{N\to\infty} 
    \|\nabla H_N(\bm^{\ul}(\vDelta))
    -
    (\vDelta\odot\va^{-1}
    +
    \vb_k(\vDelta))
    \diamond
    \bm^{\ul}(\vDelta)
    \|_N
    =0.
    \]
    Since $\lim_{\ul\to\infty} 
    \big(\Delta_s a_s^{-1}+b_{\ul,s}(\vDelta)\big)=A_s(\vDelta)$ by \eqref{eq:as-alt-formula},
    the result follows.
\end{proof}

\begin{remark}
    \label{rmk:supersolvable-unique-crit-nearby}
    In \cite[Theorems 1.5 and 1.6]{huang2023strong}, we show that when $\xi$ is strictly super-solvable, $H_N$ has exactly $2^r$ critical points $\{\bx(\vDelta)\}_{\vDelta\in \{-1,1\}^r}$.
    Moreover all $\eps$-approximate critical points with Riemannian gradient $\|\nabla_{\sph}H_N(\bx)\|\leq\eps\sqrt{N}$ are within $o_{\eps}(\sqrt{N})$ of some $\bx(\vDelta)$.
    It follows from Proposition~\ref{prop:approx-crit-1} that each $\bm^{\ul}(\vDelta)$ is an $\eps$-approximate critical point for large enough $\ul=\ul(\xi,\eps)$. 
    In fact the preceding gradient computation shows that the values $\vDelta$ agree, implying that $\|\bm^{\ul}(\vDelta)-\bx(\vDelta)\|_N \le o_{\ul\to\infty}(1)$ (compare with \cite[Definition 5, Eq. (1.15)]{huang2023strong}). 
    Moreover by \cite[Theorem 1.6]{huang2023strong} each Riemannian Hessian $\nabla^2_{\sph}H_N(\bx(\vDelta))$ has condition number at least $1/C(\xi)$. 
    It follows that each critical point $\bx(\vDelta)$ can be efficiently computed to arbitrary accuracy by applying Newton's method from $\bm^{\ul}(\vDelta)$ for a large enough $\ul=\ul(\xi)$. (By contrast, the convergence of $\bm^{\ul}(\vDelta)$ itself to $\bx(\vDelta)$ is only in the careful double-limit sense $\lim_{\ul\to\infty}\lim_{N\to\infty}$.)
\end{remark}

\begin{proposition}
\label{prop:approx-crit-2}
    If $\Phi$ is a pseudo-maximizer for $\bbA$, then for any $\ul$-indexed sequence $(q_*,\ell)=\big((q_*,\ell)_{\ul\geq 1}\big)$ such that $q_*\in [q_1,1]$, $\ul\leq\ell\leq\ol$ and $\lim_{\ul\to\infty}|q_*-q_{\ell}^{\delta}|=0$, we have
    \[
    \lim_{\ul\to\infty}
    \plim_{N\to\infty}
    \lt\|\nabla H_N(\bn^{\ell})-\vA(q_*)
    \diamond \bn^{\ell}
    \rt\|_N
    =
    0.
    \]
\end{proposition}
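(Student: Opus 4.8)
The plan is to pass to the state--evolution limit, write $\nabla H_N(\bn^{\ell})$ via the rearrangement \eqref{eq:amprearrange}, and verify the claimed radial identity species--by--species by expanding everything in an orthogonal basis of increments; the tree--descending ODE \eqref{eq:tree-descending-ode} will supply the crucial cancellation. It suffices to treat $\ell\geq \ul+1$: the case $\ell=\ul$ follows from Proposition~\ref{prop:approx-crit-1} together with $\|\bn^{\ul}-\bm^{\ul}\|_N=O(\sqrt\delta)$ (recall \eqref{eq:bn-def}) and the Hessian bound in Proposition~\ref{prop:gradients-bounded}. For $\ell\geq\ul+1$, \eqref{eq:amprearrange} gives
\[
  \nabla H_N(\bn^{\ell})=\bh+\bz^{\ell+1}+\sum_{j=0}^{\ell}d_{\ell,j}\diamond\bn^{j-1},
\]
so $\nabla H_N(\bn^{\ell})-\vA(q_*)\diamond\bn^{\ell}$ is a linear combination, with deterministic species--dependent coefficients, of iterates of our mixed AMP recursion. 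By Proposition~\ref{prop:state_evolution} (in the form of Theorem~\ref{thm:mixedAMP}),
\[
  \plim_{N\to\infty}\norm{\nabla H_N(\bn^{\ell})-\vA(q_*)\diamond\bn^{\ell}}_N^2=\sum_{s\in\sS}\lambda_s\,\bbE\big[(G_s-A_s(q_*)N^{\delta}_{\ell,s})^2\big],
\]
where $G_s:=h_s+Z^{\delta}_{\ell+1,s}+\sum_{j=0}^{\ell}d_{\ell,j,s}N^{\delta}_{j-1,s}$ is the state--evolution limit of the species--$s$ coordinates of $\nabla H_N(\bn^{\ell})$ (with $N^{\delta}_{j-1,s}:=M^{j-1}_s$ when $j\le\ul$). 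It remains to show the right side tends to $0$ as $\ul\to\infty$.

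Fix $s$. By Lemma~\ref{lem:BMlimit}, the Gaussian $Z^{\delta}_{\ul,s}$, the increments $\iota_m:=Z^{\delta}_{m+1,s}-Z^{\delta}_{m,s}$ for $\ul\le m\le\ell$, and the external Gaussian $g_s$ are jointly independent, with $\bbE[\iota_m^2]=\xi^s(\Phi(q^{\delta}_{m+1}))-\xi^s(\Phi(q^{\delta}_m))=O(\delta)$; and, by \eqref{eq:bn-def}--\eqref{eq:u-def}, for $j-1\ge\ul$
\[
  N^{\delta}_{j-1,s}=a_s\big(Z^{\delta}_{\ul,s}+h_s\big)+\sqrt{\Phi_s(q_1+\delta)-\Phi_s(q_1)}\,g_s+\sum_{m=\ul}^{j-2}u^{\delta}_{m,s}\,\iota_m,
\]
while $Z^{\delta}_{\ell+1,s}=Z^{\delta}_{\ul,s}+\sum_{m=\ul}^{\ell}\iota_m$. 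The Onsager coefficients are computed from \eqref{eq:dts-def}: using $\bbE[M^{\ell}_{s''}M^{j-1}_{s''}]=\Phi_{s''}(q^{\delta}_j)$ (Lemma~\ref{lem:BMlimit}) and the fact that, viewed as a function of its arguments, $f_{\ell,s'}$ has $j$-th partial derivative $u^{\delta}_{j-1,s'}-u^{\delta}_{j,s'}$ for $\ul<j<\ell$, equal to $u^{\delta}_{\ell-1,s'}$ for $j=\ell$, to $a_{s'}-u^{\delta}_{\ul,s'}$ for $j=\ul$, and to $0$ for $j<\ul$, one finds that $d_{\ell,\ell,s}\to\sum_{s'}f_{s'}(q_*)\partial_{s'}\xi^s(\Phi(q_*))$, that $d_{\ell,j,s}=O(\delta)$ for $\ul<j<\ell$, and that $d_{\ell,\ul,s}\to0$ — the last using pseudo-maximizer condition (3), i.e. \eqref{eq:Phip-q1}, which forces $u^{\delta}_{\ul,s'}\to f_{s'}(q_1)=a_{s'}$.

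It remains to read off the coefficient of each independent Gaussian in $G_s-A_s(q_*)N^{\delta}_{\ell,s}$. Consider $\iota_m$ with $q^{\delta}_m\to q\in[q_1,q_*]$; its coefficient equals $1+u^{\delta}_{m,s}\big(\sum_{j=m+2}^{\ell}d_{\ell,j,s}-A_s(q_*)\big)$. By Abel summation, $\sum_{j=m+2}^{\ell}d_{\ell,j,s}$ is asymptotically a Riemann sum for
\[
  \sum_{s'\in\sS}\Big(\partial_{s'}\xi^s(\Phi(q))f_{s'}(q)+\int_q^{q_*}\tfrac{\de}{\de t}\big[\partial_{s'}\xi^s(\Phi(t))\big]\,f_{s'}(t)\,\de t\Big);
\]
integrating by parts and then using \eqref{eq:tree-descending-ode} in the form $f_{s'}'(t)=\Psi(t)\Phi_{s'}'(t)$ with $\Psi$ species--independent, together with $(\xi^s\circ\Phi)'=\Phi_s'/f_s^2$, gives $\sum_{s'}\int_q^{q_*}\partial_{s'}\xi^s(\Phi(t))f_{s'}'(t)\,\de t=\int_q^{q_*}f_s'(t)/f_s(t)^2\,\de t=\tfrac1{f_s(q)}-\tfrac1{f_s(q_*)}$, hence $\sum_{j=m+2}^{\ell}d_{\ell,j,s}\to A_s(q_*)-\tfrac1{f_s(q)}$ and the coefficient of $\iota_m$ tends to $1+f_s(q)\big(-\tfrac1{f_s(q)}\big)=0$. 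The same identity (now with $q$ replaced by $q_1$, and using $u^{\delta}_{\ul,s}\to a_s=f_s(q_1)$) shows the coefficients of the constant $h_s$, of $Z^{\delta}_{\ul,s}$, and of $g_s$ all tend to $0$, while the $M^{\ul-1}_s$ term is negligible since $d_{\ell,\ul,s}\to0$. As all these coefficients are bounded uniformly in $\ul$ and $m$ (by compactness/smoothness of $\Phi,\xi$ on $[q_1,1]$) and each $\iota_m$ has variance $O(\delta)$ with $O(\delta^{-1})$ of them, a dominated--convergence (Riemann--sum) argument yields $\bbE[(G_s-A_s(q_*)N^{\delta}_{\ell,s})^2]\to0$; summing over $s$ completes the proof. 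The step I expect to be the main obstacle is the Abel--summation identification of $\sum_{j>m}d_{\ell,j,s}$ with the above integral and the recognition that the tree--descending ODE is exactly what makes that integral telescope to $1/f_s(q)-1/f_s(q_*)$.
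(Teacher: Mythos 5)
Your proposal is correct and follows essentially the same route as the paper's proof: both start from the rearrangement \eqref{eq:amprearrange}, compute the Onsager coefficients $d_{\ell,j}$ explicitly in terms of the differences $u^{\delta}_{j}-u^{\delta}_{j-1}$, and reduce everything to the identity $\sum_{s'}\partial_{s'}\xi^s(\Phi(q))f_{s'}'(q)=f_s'(q)/f_s(q)^2$ supplied by the tree-descending ODE, which makes the relevant sum telescope to $1/f_s(q)-1/f_s(q_*)$. The only difference is organizational — the paper groups the gradient by $\bn$-increments and shows the coefficient function $\wh C(q)$ is constant equal to $\vA(q_*)$, whereas you expand in the orthogonal $\bz$-increments in the state-evolution limit and show each coefficient of $G_s-A_s(q_*)N^{\delta}_{\ell,s}$ vanishes — which is the same computation read in a dual basis.
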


\begin{proof}
    For notational convenience we assume $(q_*,\ell)=(1,\ol)$; the proof is identical in general.
    Recall the rearrangement \eqref{eq:amprearrange}:
    \begin{equation}
    \label{eq:copy-amp-rearrange}
    \nabla \widetilde{H}_N(\bn^{\ol})
    =
    \bz^{\ol+1}+\sum_{j=0}^{\ol} d_{\ol, j}\diamond \bn^{j-1}
    .
    \end{equation}
    So far we did not have to compute $d_{\ol, j}$. 
    We do this now, focusing on the IAMP phase. Recalling \eqref{eq:n-def}, the IAMP iteration used non-linearity
    \begin{align*}
    \f_{\ol}
    &=
    \bn^{\ol}
    =
    n^{\ul}+
    \sum_{j=\ul}^{\ol-1}
    (\bn^{j+1}-\bn^{j})
    \\
    &=
    \va\diamond\bz^{\ul}+
    \sum_{j=\ul}^{\ol-1}
    \bu^{\delta}_{j}
    \diamond
    (\bz^{j+1}-\bz^{j})
    \\
    &=
    (\va - \bu_{\ul}^{\delta})
    \diamond
    \bz^{\ul}
    + \bu_{\ol-1}^{\delta}\diamond \bz^{\ol}
    -
    \sum_{j=\ul+1}^{\ol-1}
    (\bu^{\delta}_{j}-\bu^{\delta}_{j-1})
    \diamond
    \bz^{j}
    .
    \end{align*}
    Using the formula \eqref{eq:dts-def} we find
    \begin{align*}
    d_{\ol,j,s}
    &\approx
    \begin{cases}
    \sum_{s'\in\sS}
    \partial_{s'}\xi^s(\Phi(1))
    ~
    u^{\delta}_{\ol,s'}
    ,\quad\quad\quad\quad\quad\quad j=\ol
    ;
    \\
    -
    \sum_{s'\in\sS}
    \partial_{s'}\xi^s(\Phi(q_{j-1}^{\delta}))
    ~
    (u^{\delta}_{j,s'}-u^{\delta}_{j-1,s'})
    ,\quad \ul<j<\ol
    ;
    \\
    \sum_{s'\in\sS}
    \partial_{s'}\xi^s(\Phi(q_1))
    ~
    (a_{s'}-u^{\delta}_{\ul,s'})
    ,\quad\quad\quad j=\ul
    .
    \end{cases}
    \end{align*}
    Note that since $\Phi\in C^2([q_1,1])$ we have the uniform-in-$q_j^{\delta}$ approximations (recall \eqref{eq:f-def}):
    \begin{equation}
    \label{eq:d-j-s}
    \begin{aligned}
    u^{\delta}_{\ell,s}
    &\approx
    f_s(q_j^{\delta}),
    \\
    \frac{u^{\delta}_{\ell,s}-u^{\delta}_{\ell-1,s}}{\delta}
    &\approx
    \frac{\de }{\de q}
    \sqrt{\fr{\Phi'_s(q)}{(\xi^s \circ \Phi)'(q)}}
    \,
    \Bigg|_{q=q_j^{\delta}},
    \\
    a_s\approx u_{\ul,s}^{\delta}&\approx f_s(q_{\ul}^{\delta})\approx f_s(q_1)
    .
    \end{aligned}
    \end{equation}
    Substituting into \eqref{eq:copy-amp-rearrange}, we obtain
    \begin{equation}
    \label{eq:gradient-expansion-iamp-crits}
    \begin{aligned}
        \nabla H_N(\bn^{\ol})
        &=
        \bh+\bz^{\ol+1}
        +
        \sum_{j=\ul}^{\ol} 
        d_{\ol, j}\diamond \bn^{j-1}
        \\
        &=
        \va^{-1}\diamond \bm^{\ul}
        +
        \sum_{j=\ul}^{\ol}
        u^{\delta}_j
        \diamond 
        (\bn^{j+1}-\bn^j)
        +
        \sum_{j=\ul}^{\ol} 
        d_{\ol, j}\diamond \bn^{j-1}
        \\
        &\approx
        \Big(
        \va^{-1}
        +
        \sum_{j=\ul}^{\ol}
        d_{\ol,j}
        \Big)
        \diamond \bn^{\ul}
        +
        \sum_{j=\ul}^{\ol-1}
        \vC_j\diamond (\bn^{j+1}-\bn^j)
        ;
        \\
        C_{j,s}
        &\equiv
        a_s^{-1}
        +
        \sum_{k=j+1}^{\ul}
        d_{\ol,k,s}
        \\
        &\stackrel{\eqref{eq:d-j-s}}{\approx}
        f_s(q_j^{\delta})^{-1}
        +
        \sum_{s'\in\sS}
        \lt(
        \partial_{s'}
        \xi^s(\Phi(1))
        f_{s'}(1)
        -
        \int_{q_j^{\delta}}^1
        \partial_{s'}\xi^s(\Phi(q))
        \, 
        f_{s'}'(q)~\de q
        \rt)
        \\
        &\equiv
        \wh C_s(q_j^{\delta})
        .
    \end{aligned}
    \end{equation}
    Since the increments $(\bn^{j+1}-\bn^j)$ are orthogonal in the state evolution sense, it easily follows that the approximation of $C_{j,s}$ by $\wh C_{s}(q_j^{\delta})$ commutes with summation, i.e.
    \[
    \nabla H_N(\bn^{\ol})
    \approx 
    \Big(
    \va^{-1}
    +
    \sum_{j=\ul}^{\ol}
    d_{\ol,j}
    \Big)
    \diamond \bn^{\ul}
    +
    \sum_{j=\ul}^{\ol-1}
    \wh C(q_j^{\delta})\diamond (\bn^{j+1}-\bn^j)
    \]

    Note that we manifestly have $\wh C(1)=\vA(1)$.
    We claim the function $\wh C$ is constant on $[q_1,1]$.
    This is equivalent to showing that for each $s$ the function
    \[
     F_s(q)
    =
    \frac{1}{f_s(q)}
    +
     \lt(
        \int_{q_1}^q
        \sum_{s'\in\sS}
         \partial_{s'}\xi^s(\Phi(t))
        f_{s'}'(t)
        ~\de t
        \rt)
    \]
     is constant. Differentiating, it suffices to show
     \begin{equation}
     \label{eq:wts-weird-identity}
     \sum_{s'\in\sS}
     \partial_{s'}\xi^s(\Phi(q))
     f_{s'}'(q)
     \stackrel{?}{=}
     f_s'(q)/f_s(q)^2
     .
     \end{equation}
    Write $f_{s'}'(q)=\Psi(q)\Phi_{s'}'(q)$, where $\Psi$ is independent of $s$ since $\Phi$ solves the tree-descending ODE \eqref{eq:tree-descending-ode}. Then using the chain rule, the left-hand side of \eqref{eq:wts-weird-identity} equals
     \[
     \Psi(q)
     \sum_{s'\in\sS}
     \partial_{s'}\xi^s(\Phi(q))\cdot \Phi_{s'}'(q)
     =\Psi(q)(\xi^s\circ\Phi)'(q).
     \]
     Meanwhile the right-hand side of \eqref{eq:wts-weird-identity} is
     \[
    f_s'(q)/f_s(q)^2
    =
    \Psi(q)\Phi_{s}'(q)
    \cdot\frac{(\xi^s\circ\Phi)'(q)}{\Phi_s'(q)}
    =
    \Psi(q)(\xi^s\circ\Phi)'(q).
     \]
     Therefore $\wh C(q)=\vA(1)$ is constant as claimed.
     Finally it is clear that the $\bn^{\ul}$ coefficient in \eqref{eq:gradient-expansion-iamp-crits} approximately equals $\wh C(q_1)$ and hence also $\vA(1)$. Then \eqref{eq:gradient-expansion-iamp-crits} implies
    \[
         \nabla H_N(\bn^{\ol})
         \approx 
         \vA(1)\diamond\lt(\bn^{\ul}+\sum_{j=\ul}^{\ol-1} (\bn^{j+1}-\bn^j)\rt)
         \\
         =\vA(1)\diamond \bn^{\ol}
    \]
     which completes the proof.
\end{proof}

From the point of view of \cite{huang2023algorithmic}, the fact that $\|\nabla_{\sph} H_N(\bn^{\ol})\|_N\approx 0$ is to be expected. At least for $(\Phi;q_1)$ maximizing $\bbA$, if this were not true than an extra step of gradient descent would essentially suffice to reach energy strictly better than $\ALG$, contradicting the optimality in \cite[Theorem 1]{huang2023algorithmic}. 
However the radial derivative computation is interesting in its own right and lets us study the spherical Hessian around an output $\bsig$.
We believe that Corollary~\ref{cor:top-eigen-zero} below can be strengthened to hold with $\blambda_1$ rather than $\blambda_{\eps N}$. This seems to require a more precise Gaussian conditioning argument around $\cA(H_N)$ which we chose not to pursue.

\begin{corollary}
\label{cor:top-eigen-zero}
    With $\blambda_k$ the $k$-th largest eigenvalue of a symmetric real matrix,
    \begin{equation}
    \label{eq:approx-max-eigenvalue}
    \lim_{\ul\to\infty,\eps\to 0}
    \plim_{N\to\infty}
    \blambda_{\eps N}
    \lt(
    \nabla^2_{\sph} H_N(\bn^{\ol})
    \rt)
    =0.
    \end{equation}
\end{corollary}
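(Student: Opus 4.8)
The plan is to reduce the Riemannian Hessian to an ambient Hessian plus a radial (Lagrange-multiplier) correction, compute the limiting spectrum of the result by Gaussian conditioning on the AMP trajectory, and then show its right edge equals $0$. Write $\bsig=\bn^{\ol}$, let $P_s$ be the coordinate projection onto $\cI_s$, and let $P^\perp_\bsig=\id-\sum_s \frac{\bsig_s\bsig_s^\T}{\lambda_s N}$ be the orthogonal projection onto the tangent space of $\cS_N$ at $\bsig$. The standard formula for the Riemannian Hessian on a product of spheres gives
\[
\nabla^2_{\sph}H_N(\bsig)=P^\perp_\bsig\Big(\nabla^2\wtH_N(\bsig)-\sum_{s\in\sS}\frac{\langle\bsig_s,\nabla_s H_N(\bsig)\rangle}{\lambda_s N}\,P_s\Big)P^\perp_\bsig.
\]
By Proposition~\ref{prop:approx-crit-2} (applied with $(q_*,\ell)=(1,\ol)$, and with Proposition~\ref{prop:gradients-bounded} absorbing the $O(\delta)$ rounding of \eqref{eq:round-final-output} into the Hessian as well), $\nabla H_N(\bsig)\simeq\vA(1)\diamond\bsig$, so the radial correction is $\simeq\sum_s A_s(1)P_s$. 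Since $P^\perp_\bsig$ differs from $\id$ by a rank-$r$ projection, the $\eps N$-th eigenvalue of $\nabla^2_\sph H_N(\bsig)$ agrees up to $o_N(1)$ with that of $\bH:=\nabla^2\wtH_N(\bsig)-\sum_s A_s(1)P_s$, so it suffices to understand the limiting spectral distribution and outlier count of $\bH$.

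Next I would compute the conditional law of $\nabla^2\wtH_N(\bsig)$ given the $\sigma$-algebra $\cF$ generated by the iterates $(\bw^0,\bm^0,\dots,\bw^{\ol+1})$ producing $\bsig$, following the Gaussian conditioning for spin-glass Hessians along AMP trajectories in \cite{subag2018following,ams20,sellke2021optimizing}. Conditionally on $\cF$, the matrix $\nabla^2\wtH_N(\bsig)$ equals a deterministic matrix plus an independent centered Gaussian symmetric matrix $\bG$ whose block $(s,s')$ has i.i.d.\ entries of variance a positive multiple of $\frac1N\,\partial_{x_s,x_{s'}}\xi(\vone)$ (the self-overlap being $\vR(\bsig,\bsig)\simeq\vone$), up to a finite-rank correction supported on $\Span(\bm^0,\dots,\bm^{\ol},\bsig_1,\dots,\bsig_r)$. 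That span has dimension $O(\ol)=o(N)$, hence affects neither the limiting spectral distribution nor the count of eigenvalues above any fixed level by more than $o(\eps N)$. The deterministic part is a species-wise scalar diagonal matrix plus a finite-rank piece, so — after cancelling against $\sum_s A_s(1)P_s$ — $\bH$ has the same limiting spectral distribution as a deformed multi-species GOE $\bG-\sum_s B_s P_s$ for an explicit $\vB=\vB(\xi,\Phi)\in\bbR^\sS$. Its limiting spectral distribution $\mu$ is the corresponding matrix-Dyson-equation (free) deformation of the multi-species semicircle law, compactly supported with connected support $[\ell_0,u_0]$ and a square-root edge at $u_0$.

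It then remains to show $u_0=0$; this is the crux and the precise ``$E_\infty$'' marginality statement alluded to in the title of this subsection. It is the multi-species analogue of the single-species identity $A(1)=2\sqrt{\xi''(1)}$, which follows from $f(1)^{-2}=\xi''(1)$ (see \eqref{eq:f-def}) plus \eqref{eq:as-def}, and which places the top of the semicircle exactly at $0$. In general I would characterize $u_0$ by the resolvent/Stieltjes system $\{m_s\}_{s\in\sS}$ of $\bG-\sum_s B_s P_s$ — equivalently a Perron--Frobenius-type condition on a matrix assembled from $\vB$ and $(\partial_{x_s,x_{s'}}\xi(\vone))$ — and verify this condition holds with equality at $u=0$ precisely because $\Phi$ is a pseudo-maximizer: substituting \eqref{eq:as-def}--\eqref{eq:f-def} for $\vA(1)$, using the tree-descending ODE \eqref{eq:tree-descending-ode} at $q=1$, the admissibility normalization $\langle\vlam,\Phi(q)\rangle=q$, and (in the super-solvable case) the identities \eqref{eq:as-alt-formula} and \eqref{eq:Phip-q1}, the system should collapse to the eigenvector identity of Proposition~\ref{prop:diagonally-signed-min-max} for a suitable diagonally-signed matrix, forcing the edge to $0$. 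The super-solvable case ($q_1=1$, $\bsig=\bm^{\ul}(\vone)$ up to rounding) is the degenerate endpoint, handled identically using Proposition~\ref{prop:approx-crit-1} in place of Proposition~\ref{prop:approx-crit-2}. Finally, since $\mu$ has right edge $0$ and $\bH$ has at most $O(\ol)=o(\eps N)$ eigenvalues outside a neighborhood of $\supp\mu$, for fixed $\eps>0$ we get $\plim_{N\to\infty}\blambda_{\eps N}(\nabla^2_\sph H_N(\bsig))=u(\eps,\ul)$, the point carrying $\mu$-mass $\eps$ to its right; letting $\eps\to0$ and $\ul\to\infty$ gives $0$.

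The hard part is the edge identity of the previous paragraph together with making the multi-species Gaussian conditioning precise enough that the finite-rank corrections are genuinely negligible: one must check that conditioning on the trajectory's gradient values (not merely its span) and on the self-overlap contributes only an $O(\ol)$-rank deformation, and that the resulting species-wise diagonal shift is \emph{exactly} $\vA(1)$ rather than something of the same order. A softer route gives only the inequality $u_0\le 0$ (hence $\limsup\blambda_{\eps N}\le 0$) but avoids all of this: if $u_0>0$ then an $\Omega(N)$-dimensional subspace carries Hessian curvature $\ge u_0/2$, so perturbing $\cA(H_N)$ by a small random vector and running a few steps of (Lipschitz) projected gradient ascent would reach energy $\ALG-\eps+\Omega(1)$ with high probability, contradicting the hardness bound \cite[Theorem 1]{huang2023algorithmic} applied to this modified Lipschitz algorithm; the matching bound $u_0\ge0$, however, appears to require the conditional-law computation.
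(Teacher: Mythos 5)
Your overall architecture is viable and your first reduction (Riemannian Hessian $=$ projected ambient Hessian minus the radial correction, identified as $\vA(1)$ via Proposition~\ref{prop:approx-crit-2}, with the rank-$r$ projection and the rounding step absorbed into an index shift) matches the paper's. But the decoupling step is genuinely different, and it is where the two approaches diverge in cost. The paper does \emph{not} condition on the AMP trajectory: it observes that the bulk spectral measure of $\bW(\bx)=\nabla^2 H_N(\bx)-\vA(1)$-shift, for \emph{deterministic} $\bx\in\cS_N$, concentrates at the large-deviation rate $e^{-cN^2}$, so a union bound over a $\delta\sqrt N$-net of $\cS_N$ (as in Subag's argument) makes the random location $\bn^{\ol}$ irrelevant; one then only needs the deterministic-point random matrix statement, which is cited from \cite[Proposition 5.18]{huang2023strong}. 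This completely avoids identifying any conditional law. Your Gaussian-conditioning route is exactly the ``more precise conditioning argument around $\cA(H_N)$'' that the authors say they chose not to pursue (it would be needed to upgrade $\blambda_{\eps N}$ to $\blambda_1$), and for the $\blambda_{\eps N}$ statement it is strictly more work: one must control the rank-$O(\ul N)$ reduction of the conditional covariance as well as the conditional mean.

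Two concrete soft spots in your version. First, your description of the conditional mean is off: conditioning $\nabla^2\wtH_N(\bsig)$ on the gradient observations $\nabla\wtH_N(\bm^t)$ produces a conditional mean that is purely finite-rank (rank $O(\ul)$, supported on the span of the iterates) --- there is no species-wise scalar diagonal component, so nothing ``cancels against'' $\sum_s A_s(1)P_s$. The $\vA(1)$ shift survives intact; it is precisely the deformation of the multi-species semicircle law whose right edge must be shown to sit at $0$. You do arrive at the correct reduced statement despite this, but the cancellation narrative would not survive the computation. Second, the edge identity itself --- the crux, as you say --- is only sketched; the paper does not prove it here either, but it is a precise citation to \cite[Proposition 5.18]{huang2023strong} rather than a plan, and your proposed derivation from \eqref{eq:as-def}--\eqref{eq:f-def}, the tree-descending ODE, and Proposition~\ref{prop:diagonally-signed-min-max} is essentially a description of what that proposition establishes. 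Your ``softer route'' via the hardness theorem correctly gives only the upper bound on the edge (it mirrors the paper's own heuristic remark about the gradient), so it cannot replace the RMT input. Net assessment: a workable but harder and partially misdescribed route; adopting the $N^2$-rate concentration plus net argument would close the decoupling step cleanly and leave only the (cited) deterministic edge computation.
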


\begin{proof}
    Fixing $\vA=\vA(1)$, the bulk spectral measure of 
    \begin{equation}
    \label{eq:artifical-hessian}
    \bW(\bx)=\nabla^2 H_N(\bx) - \vA\diamond \bx
    \end{equation}
    for deterministic $\bx\in\cS_N$ concentrates with rate function $N^2$ around a limiting spectral measure independent of $\bx$. By union-bounding over an $\delta\sqrt{N}$-net as in \cite[Proof of Lemma 3]{subag2018following}, it thus suffices to show \eqref{eq:approx-max-eigenvalue} at a point $\bx\in\cS_N$ independent of $H_N$, with $\bW(\bx)$ in place of $\nabla^2_{\sph} H_N(\bsig)$.
    This is purely a statement of random matrix theory and is shown in \cite[Proposition 5.18]{huang2023strong}.
\end{proof}

Notably Corollary~\ref{cor:top-eigen-zero} explains the equality $\ALG=E_{\infty}$ for pure models, which we derived manually in \cite{huang2023algorithmic}. 
Indeed for a pure model with $\xi=\prod_{i=1}^r x_i^{a_i}$, the energy and radial derivative are deterministically proportional:
\[
    \nabla_{\rd} H_N(\bx)
    =
    -H_N(\bx)\va\diamond \bx,\quad\forall \bx\in\cB_N.
\]
It follows (using again the $N^2$ large deviation rate for the spectral bulk) that there is a unique energy level $E_{\infty}$ at which critical points can have spherical Hessian obeying the conclusion of Corollary~\ref{cor:top-eigen-zero}. 
This is the definition of $E_\infty$ given in \cite{auffinger2013random,mckenna2021complexity}.

\subsection{Branching IAMP and Exponential Concentration}
\label{subsec:branch}

Here we modify the second stage of our IAMP algorithm (which requires $\vDelta=\vone$) to use external Gaussian randomness in a small number of increment steps. This allows the construction of an ultrametric tree of outputs with large constant depth and $\exp(cN)$ breadth, with pairwise overlaps given by $\Phi$. 
More precisely, for any finite ultrametric space $X=(x_1,\dots,x_M)$, $M=\exp(cN)$, of diameter at most $1-q_1$, branching IAMP outputs $(\bsig_1,\dots,\bsig_M)$ with
\[
    \plim_{N\to\infty}
    \max_{1\leq i,j\leq M}\tnorm{
        \vR(\bsig_i,\bsig_j)
        - \Phi\big(1-d_X(x_i,x_j)\big)
    }_{\infty}=0.
\]
We use an approach suggested in \cite{alaoui2020algorithmic} by injecting external Gaussian noise $\bg^{(i)}$ into the IAMP phase of the algorithm at depth $q_i\in (q_1,1)$. 
Importantly, this gives an explicit construction of $\exp(cN)$ approximate critical points of $H_N$ (with exponentially good probability) whenever there is an IAMP phase. 
A similar construction was used by one of us in \cite[Section 4]{sellke2021optimizing}. There the Gaussian noise was constructed artifically by preliminary iterates of AMP rather than from exogenous noise (due to the lack of a state evolution result incorporating independent gaussian vectors). This only enabled the construction of a large constant number of outputs rather than exponentially many.

Our branching IAMP proceeds as follows. We first apply Stage $\I$ with $\vDelta=\vone$ as before. We fix $q_1<q_2<\dots<q_m=1$ and let 
\[
    \ell^{\delta}_{q_i}=\ul+\lt\lceil \fr{q_i-q}{\delta}\rt\rceil+1
    ,
    \quad
    i\in [m]
    .
\]
We define $\bn^{\ell}$ with the same recursive formula as before, unless $\ell=\ell^{\delta}_{q_i}$ for some $i\in [m]$. For these cases, we define $\bg^{(1)},\dots,\bg^{(m)}\sim \cN(0,\bone_N)$ to be independent standard Gaussian vectors. Then we set:
\begin{equation}
\label{eq:branchIAMP}
    \bn^{\ell+1} 
    =
    \begin{cases}
    \bn^{\ell}
    +
    \sqrt{\xi^s\big(\Phi(q^{\delta}_{\ell_{q_i}^{\delta}+1})\big)-\xi^s\big(\Phi(q^{\delta}_{\ell_{q_i}^{\delta}})\big)}
    \diamond \bg^{(i)},
    \quad \quad\quad
    \ell
    =
    \ell^{\delta}_{q_i}\quad
    \text{for some } 
    i\in [m]
    \\
    \bn^{\ell}+ 
    u_{\ell}^{\delta}\diamond
    \left(
    \bz^{\ell+1}-\bz^{\ell}
    \right),
    \quad \text{ else}.
    \end{cases}
\end{equation}

The definition \eqref{eq:branchIAMP} naturally enables couplings for pairs of iterations. We say the iterations $\big(\bn^{\ell,1},\bn^{\ell,2}\big)_{\ell\geq 1}$ are $q_j$-coupled if their associated Gaussian vectors
\begin{align*}
    \bg^{(1,1)},\dots,\bg^{(m,1)}
    &\sim 
    \cN(0,\bone_N),
    \\
    \bg^{(1,2)},\dots,\bg^{(m,2)}
    &\sim 
    \cN(0,\bone_N)
\end{align*}
are coupled so that $\bg^{(i,1)}=\bg^{(i,2)}$ almost surely for $i<j$, and the variables are otherwise independent.

\begin{proposition}
\label{prop:branching-is-same}
    Let the iterations $\bn^{\ell,1},\bn^{\ell,2}$ be $q_j$ coupled as above, and let $\Phi$ be a pseudo-maximizer of $\bbA$ (recall Definition~\ref{def:pseudo-maximizer}). Then
    \begin{align}
    \label{eq:branching-energy}
    \lim_{\ul\to \infty}
    \plim_{N\to\infty}
    \frac{H_N(\bn^{\ol,a}_{\delta})}{N}
    &=
    \bbA(\Phi),\quad a\in\{1,2\}
    ;
    \\
    \label{eq:branching-crit}
    \lim_{\ul\to \infty}
    \plim_{N\to\infty}
    \lt\|\nabla H_N(\bn^{\ell,a})-\vA(1)
    \diamond \bn^{\ell,a}
    \rt\|_N
    &=
    0,\quad a\in\{1,2\}
    ;
    \\
    \label{eq:branching-overlap}
    \lim_{\ul\to \infty}
    \plim_{N\to\infty}
    \frac{
    \la
    \bn^{\ol,1}_{\delta}
    ,
    \bn^{\ol,2}_{\delta}
    \ra
    }
    {N}
    &=
    \Phi(q_j^{\delta})
    .
    \end{align}
\end{proposition}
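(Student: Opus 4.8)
The plan is to recognize the branching IAMP \eqref{eq:branchIAMP} as an ordinary AMP iteration augmented by a constant number $m$ of exogenous Gaussian injections, so that the general state evolution of Theorem~\ref{thm:mixedAMP} applies, and then to run \emph{two} $q_j$-coupled copies jointly. Concretely, I would track the limiting joint Gaussian process of $\big(\bz^{\ell,1},\bn^{\ell,1},\bz^{\ell,2},\bn^{\ell,2}\big)_{\ell}$ together with the injected vectors, where $\bg^{(i,1)}=\bg^{(i,2)}$ for $i<j$ and $\bg^{(i,1)}\indep\bg^{(i,2)}$ for $i\geq j$ enter this two-replica state evolution as (partially shared) external Gaussians exactly as allowed by Theorem~\ref{thm:mixedAMP}.

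For a single copy, the state evolution agrees with that of the non-branching IAMP (Lemma~\ref{lem:BMlimit}) up to a total error of size $O(m\delta)=O(\delta)$: the increment injected at step $\ell^{\delta}_{q_i}$ perturbs the species-$s$ self-overlap $\bbE[(N^{\delta}_{\ell,s})^2]$ by only $O(\delta)$ relative to the AMP step it replaces, and there are just $m=O(1)$ such steps, while $\delta=\ul^{-1}\to 0$ in all three displays. Hence the Stage~$\I$ energy of Lemma~\ref{lem:sphereenergy} and the Stage~$\II$ energy of Lemma~\ref{lem:iampenergy} add up, as in the proof of Theorem~\ref{thm:main-alg}, to give \eqref{eq:branching-energy}; and the Onsager/gradient expansion \eqref{eq:gradient-expansion-iamp-crits} of Proposition~\ref{prop:approx-crit-2} carries over to give \eqref{eq:branching-crit} for the leaf iterate $\bn^{\ol,a}$ (more generally one gets coefficient $\vA(q_*)$ for an iterate index $\ell$ with $q^{\delta}_{\ell}\to q_*$). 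The one point to check here is that the $m$ external-Gaussian increments do not enter the Onsager coefficients $d_{\ell,j}$ — derivatives with respect to the exogenous Gaussians vanish — and, having $\|\cdot\|_N$ of order $\sqrt{\delta}$ each, contribute negligibly to the telescoping sum over increments, so the identity $\wh C\equiv\vA(1)$ from the proof of Proposition~\ref{prop:approx-crit-2} is unaffected.

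For the cross-overlap \eqref{eq:branching-overlap}: for $\ell\leq \ell^{\delta}_{q_j}$ both copies are deterministic functions of $H_N$ and of the Gaussian vectors injected strictly before the depth-$q_j$ branch, data on which the two coupled runs agree, so $\bn^{\ell,1}=\bn^{\ell,2}$ identically and therefore $\vR\big(\bn^{\ell^{\delta}_{q_j},1},\bn^{\ell^{\delta}_{q_j},2}\big)=\vR\big(\bn^{\ell^{\delta}_{q_j},1},\bn^{\ell^{\delta}_{q_j},1}\big)\simeq \Phi\big(q^{\delta}_{\ell^{\delta}_{q_j}}\big)$. At step $\ell^{\delta}_{q_j}$ the vectors $\bg^{(j,1)}\indep\bg^{(j,2)}$ add no cross-covariance, so the cross-overlap is unchanged. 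I would then induct on $\ell>\ell^{\delta}_{q_j}$: if $\vR(\bn^{\ell',1},\bn^{\ell'',2})\simeq \Phi\big(q^{\delta}_{\ell^{\delta}_{q_j}}\big)$ for all indices up to $\ell$, then state evolution gives $\bbE[Z^{\delta}_{\ell'+1,s,1}Z^{\delta}_{\ell''+1,s,2}]=\xi^s\big(\vR(\bn^{\ell',1},\bn^{\ell'',2})\big)=\xi^s\big(\Phi(q^{\delta}_{\ell^{\delta}_{q_j}})\big)$, which is constant in $\ell',\ell''$, so the cross-covariance of the $\bz$-increments $Z^{\delta}_{\ell+1,s,a}-Z^{\delta}_{\ell,s,a}$ vanishes; together with independence across copies of any later injected $\bg^{(i)}$, $i>j$, the iterate $\bn^{\ell+1}$ inherits the same frozen cross-overlap. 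Letting $N\to\infty$ then $\ul\to\infty$ (so $q^{\delta}_{\ell^{\delta}_{q_j}}\to q_j$) gives \eqref{eq:branching-overlap}. This is the analogue of \cite[Sections~4 and~8]{sellke2021optimizing}, with the artificially-generated noise used there replaced by the genuine exogenous vectors of \eqref{eq:branchIAMP} as suggested in \cite{alaoui2020algorithmic}.

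The main obstacle is not conceptual but organizational: setting up the two-replica state evolution cleanly and checking that the ``frozen cross-overlap'' is a genuine fixed point of the joint recursion, including its interaction with the Onsager terms, and simultaneously verifying that the $O(\delta)$ single-copy perturbations caused by the branching injections are harmless for the energy and gradient identities. I expect this bookkeeping, rather than any new idea, to constitute the bulk of the proof.
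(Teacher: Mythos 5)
Your proposal is correct and follows essentially the same route as the paper: invoke the generalized state evolution of Theorem~\ref{thm:mixedAMP} to handle the exogenous Gaussian injections, observe that the single-copy energy and gradient analyses are perturbed only at the $m=O(1)$ modified steps (hence negligibly as $\delta\to 0$), and establish \eqref{eq:branching-overlap} via the two-replica ``frozen cross-overlap'' induction, which is exactly the argument of \cite[Section 8]{sellke2021optimizing} that the paper cites. The paper's own proof is only an outline deferring to that reference, so your write-up supplies the same argument in somewhat more detail.
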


\begin{proof}
    The analysis uses the slightly generalized state evolution given in Theorem~\ref{thm:mixedAMP}, which states that \eqref{eq:state-evolution-basic} continues to hold even in the presence of external randomness $\bg^{(i)}$. Modulo this point, the calculations are essentially identical. Indeed \cite{sellke2021optimizing} uses exactly the same calculations to analyze a slightly different formulation of branching IAMP (therein, the vectors $\bg$ are defined via negatively time-indexed AMP iterates to sidestep the lack of a generalized state evolution result). We therefore give only an outline below.

    The SDE description in \eqref{lem:BMlimit} is unchanged if one uses the slightly added generality of Theorem \ref{thm:mixedAMP} to incorporate the external Gaussian noise. (This Gaussian noise is scaled in \eqref{eq:branchIAMP} to achieve exactly the same effect as a usual iteration step.)
    The energy analysis of $H_N(\bn^{\ol})$ only changes on the $m$ modified steps which has negligible effect since $\delta\to 0$ as $\ul\to\infty$; similarly for $\nabla H_N(\bn^{\ol})$. Thus \eqref{eq:branching-energy} follows by the same proof as before.
    The proof of \eqref{eq:branching-overlap} is identical to \cite[Section 8]{sellke2021optimizing}.
\end{proof}

In Proposition~\ref{prop:high-prob} below we observe that concentration of measure implies Proposition~\ref{prop:branching-is-same} holds with exponentially high probability. Thus we can couple together $\exp(cN)$ branching IAMPs to construct a full ultrametric tree of large constant depth $m$ and breadth $\exp(cN)$. To do this, we fix $m$, take $\ul$ sufficiently large and then $\eta>0$ sufficiently small. Then with $K=\exp(\eta N)$, we consider a complete depth $m$ rooted tree $\cT$, with root defined to have depth $1$, such that each vertex at depths $1,\dots,m-1$ has $K$ children. Thus the leaf-set $L(\cT)$ is naturally indexed by $[K]^m$. For $v,v'\in L(\cT)$ we let $v\wedge v'\in \{1,2,\dots,m\}$ denote the height of their least common ancestor.
For each non-leaf $x\in V(\cT)$, label the edge from $x$ to its parent by an i.i.d. Gaussian vector $\bg^{(x)}\sim\cN(0,I_N)$. 
Then for each leaf $v\in L(\cT)$, using the $m$ Gaussian vectors along the path from the root of $\cT$ to $v$ yields branching IAMP output $\bsig^{(v)}$ for any $H_N$.

\begin{proposition}
\label{prop:high-prob}
    Proposition~\ref{prop:branching-is-same}
    holds with exponentially good probability in the following sense. Fix $m$ and $q_1<q_2<\dots<q_m=1$.
    For any $\eps>0$, for large enough $\ul$ there exists $\eta=\eta(\eps,\ul)>0$ such that for $N$ large enough, the following hold simultaneously across all $v,v'\in L(\cT)$ with probability at least $1-\exp(-\eta N)$:
    \begin{equation}
    \label{eq:exp-high-prob}
    \begin{aligned}
    |\bbA(\Phi)
    -
    H_N(\bn^{\ol,v})/N
    |
    &\leq
    \eps;
    \\
    \|\nabla H_N(\bn^{\ol,v})-
    \vA(1) 
    \diamond
    \bn^{\ol,v}\|_N
    &\leq
    \eps
    ;
    \\
    \lt\|\vR(\bn^{\ol,v},\bn^{\ol,v'})
    -
    \Phi(q_{v\wedge v'})
    \rt\|_N
    &\leq
    \eps
    .
    \end{aligned}
    \end{equation}
\end{proposition}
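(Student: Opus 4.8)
The plan is to upgrade the in-probability statements of Proposition~\ref{prop:branching-is-same} to exponentially-high-probability statements via Gaussian concentration of measure, and then union bound over the $\exp(O(\eta N m))$ pairs of leaves in $\cT$. The three quantities appearing in \eqref{eq:exp-high-prob} are all of the form $G(H_N, \bg^{(\cdot)}) = G(\bg)$ for a Gaussian vector $\bg$ collecting the disorder coefficients of $H_N$ (restricted to a convex set $K_N$ as in Proposition~\ref{prop:gradients-bounded}, on which $H_N$ and its low-order derivatives have controlled operator norm, so that $G$ is Lipschitz in $\bg$ there) together with the finitely many external Gaussian vectors along the relevant root-to-leaf path(s). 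On $K_N$, each AMP iterate $\bn^{\ell}$ is a fixed $O_\ul(1)$-Lipschitz function of $\bg$ with Lipschitz constant scaling like $N^{-1/2}$ in the appropriate normalization (this is exactly the Lipschitz-AMP modification already invoked in \cite[Section~8]{huang2021tight} and in the proof of Theorem~\ref{thm:main-alg}), and $H_N/N$, $\nabla H_N/\sqrt N$, and $\vR(\cdot,\cdot)$ are locally Lipschitz on $\cB_N$. Composing, each of the three functionals in \eqref{eq:exp-high-prob} is $O_\ul(N^{-1/2})$-Lipschitz in $\bg$ on the good event $\{H_N \in K_N\}$; by the Kirszbraun extension and the standard Gaussian concentration inequality, it concentrates around its median (hence, by Proposition~\ref{prop:branching-is-same}, around the stated deterministic value) with fluctuations of order $N^{-1/2}$ and Gaussian tails $\exp(-c' N)$.

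Concretely I would proceed as follows. First, fix $m$ and $q_1 < \dots < q_m = 1$, and take $\ul$ large enough that Proposition~\ref{prop:branching-is-same} gives the three equalities up to additive error $\eps/2$ in probability; call the deterministic targets $\bbA(\Phi)$, $\vA(1)$, $\Phi(q_{v\wedge v'})$. Second, intersect with $K_N$ from Proposition~\ref{prop:gradients-bounded}, which has probability $\ge 1 - e^{-cN}$ and on which all relevant operator norms obey \eqref{eq:gradient-bounded}; on this event the iterate maps and the three functionals are $L N^{-1/2}$-Lipschitz in $\bg$ for a constant $L = L(\ul,\xi,\vh,\vlam)$. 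Third, apply Gaussian concentration: for a single leaf $v$ (resp. pair $v,v'$), each of the quantities in \eqref{eq:exp-high-prob} lies within $\eps/2$ of its target with probability at least $1 - 2\exp(-c'' \eps^2 N / L^2)$. Fourth, choose $\eta = \eta(\eps,\ul) > 0$ small enough that $c'' \eps^2 / L^2 > 3 \eta m$, say; then a union bound over the at most $K^m = \exp(\eta m N)$ leaves and $K^{2m}$ leaf-pairs, together with $\bbP[H_N \notin K_N] \le e^{-cN}$, shows that \eqref{eq:exp-high-prob} holds simultaneously for all $v,v' \in L(\cT)$ with probability at least $1 - \exp(-\eta N)$ once $N$ is large. (Here one shrinks $\eta$ further if necessary so that $\eta < c$ as well.)

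The only genuine point requiring care — and the step I expect to be the main obstacle — is establishing the $O_\ul(N^{-1/2})$ Lipschitz bound on the AMP iterates as functions of the disorder $\bg$, uniformly on $K_N$, so that the external-noise vectors and the disorder can be treated on the same footing. This is not hard in principle: the AMP recursion \eqref{eq:AMP-body}–\eqref{eq:dts-def} composes $\nabla H_N$ (which on $K_N$ is Lipschitz in both its spatial argument and in $\bg$, with the right normalization by Proposition~\ref{prop:gradients-bounded}), the fixed Lipschitz nonlinearities $f_{\ell,s}$, and the Onsager coefficients $d_{\ell,j}$ (which are deterministic numbers depending only on the state-evolution covariance, hence contribute nothing to the $\bg$-Lipschitz constant). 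Iterating over the $\ul + (\ol - \ul) = O_\ul(1)$ many steps gives a Lipschitz constant that is a finite ($\ul$- and model-dependent) product, hence $O_\ul(N^{-1/2})$ as claimed; the external Gaussians $\bg^{(i)}$ enter \eqref{eq:branchIAMP} with coefficients $\sqrt{\xi^s(\Phi(q^\delta_{\ell+1})) - \xi^s(\Phi(q^\delta_\ell))} = O(\sqrt\delta)$, which only improves the bound. Since this Lipschitz-AMP bookkeeping is carried out in detail in \cite[Section~8]{huang2021tight} and is exactly what was used to make $\cA$ Lipschitz in the proof of Theorem~\ref{thm:main-alg}, I would cite it rather than redo it, and simply note that the same argument applies verbatim with the external noise vectors adjoined to the input.
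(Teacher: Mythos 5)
Your proposal is correct and follows essentially the same route as the paper's proof: invoke the Lipschitz-AMP modification from \cite[Section 8]{huang2021tight} (noting it extends verbatim when the external noise vectors are adjoined to the Gaussian input), apply Gaussian concentration of measure to each functional for a fixed leaf or leaf-pair, and union bound over $L(\cT)$ after shrinking $\eta$. The paper's write-up is terser but contains no additional ideas beyond what you have.
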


\begin{proof}
    As explained in \cite[Section 8]{huang2021tight}, the map $H_N\mapsto \bn^{\ol}$ agrees with a $C(\ul)$-Lipschitz function of the coefficients $\bG^{(k)}$ of $H_N$ except with probability $1-\exp(-cN)$.
    The same proof applies for $H_N\mapsto \bn^{\ol,v}$ as well since the external noise variables are also Gaussian.
    Concentration of measure on Gaussian space now ensures that the statements above hold with exponentially high probability for each fixed $(v,v')$. Union bounding over all such pairs for small enough $\eta$ implies the result.
\end{proof}

In particular, the last conclusion in \eqref{eq:exp-high-prob} shows that all $\exp(\eta N)$ constructed points have pairwise distance at least $\delta\sqrt{N}$ for $0<\delta<1-q_{m-1}$.
Thus for any sub-solvable model, with high probability there are exponentially many $\sqrt{N}/C(\xi)$-separated approximate critical points.
This is a converse to the main result of \cite{huang2023strong}, where we show that strictly super-solvable models enjoy a \emph{strong topological trivialization} property which rules out such behavior.

\begin{remark}
An alternative to branching IAMP, which is very natural from the point of view of our companion work \cite{huang2023algorithmic}, is to slightly perturb $H_N$ to a $(1-\eta)$-correlated function $H_N^{(\eta)}$.
Concentration of measure implies that the overlap
\[
    \vR\big(\cA(H_N),\cA(H_N^{(\eta)})\big)
\]
concentrates exponentially around a limiting value $R_{\delta,\ul,\eta}\in\bbR^r$. We expect that taking $\eta\to 0$ with $\delta,\ul$ in a suitable way enables $R_{\delta,\ul,\eta}\approx\Phi(q)$ for any desired $q\in [q_1,1]$. This corresponds to the fact that $p(q)$ for $q\in [q_1,1]$ for any $(p,\Phi;q_0)$ maximizing $\bbA$. However this approach seems more cumbersome to analyze explicitly.
\end{remark}

\begin{remark}
    % \bhcomment{maybe this belongs in \cite{huang2023strong} actually}
    % It is mentioned there too
    \revedit{The construction in this section shows the quenched existence of $\exp(\eta N)$ well-separated \emph{approximate} critical points for strictly sub-solvable models.
    In \cite[Theorem 5.15]{huang2023strong} we use this fact to prove the number of exact critical points is exponentially large \emph{in expectation}.
    However we are unable to prove the quenched (i.e. high-probability) existence of $\exp(\eta N)$ exact critical points in strictly sub-solvable models. Showing that this is the case, or more generally identifying the quenched exponential order of the number of critical points, is an interesting direction for future work.}
\end{remark}

\subsection*{Acknowledgements}

B.H. was supported by an NSF Graduate Research Fellowship, a Siebel scholarship, NSF awards CCF-1940205 and DMS-1940092, and NSF-Simons collaboration grant DMS-2031883.
M.S. was supported by an NSF
graduate research fellowship, a Stanford graduate fellowship, and NSF
award CCF-2006489 and was a member at the IAS while parts of this work were completed.

% \subsection*{Conflict of Interest Statement}

% The authors have no relevant financial or non-financial interests (beyond the aforementioned funding).

% \subsection*{Data Availability Statement}

% We do not analyze or generate any data. Instead, our work
% proceeds via a mathematical approach. 

\bibliographystyle{alpha}
\bibliography{all-bib}

\appendix
\addtocontents{toc}{\protect\value{tocdepth}=1} 
\section{State Evolution: Proof of Proposition \ref{prop:state_evolution}}
\label{sec:ProofSE}

In this section we prove Proposition~\ref{prop:state_evolution}, following the appendix of \cite{ams20}. Throughout, we denote by $\bG^{(k)}\in (\bbR^N)^{\otimes k}$, $k\ge 2$ a sequence of standard Gaussian tensors. For $S_k$ the symmetric group on $k$ elements we also write 
\begin{equation}
\label{eq:Ak-def}
    \bA^{(k)}=\frac{1}{N^{(k-1)/2} }\Gamma^{(k)}
    \diamond 
    \sum_{\pi\in S^k}(\bG^{(k)})^{\pi}
\end{equation}
for the rescaled tensors with entries
\begin{equation}
\label{eq:A-defn}
     \bA^{(k)}_{i_1,\dots,i_k}
     =
     \frac{1}{N^{(k-1)/2} }
     \gamma_{s(i_1),\dots,s(i_k)} 
     \sum_{\pi\in S_k}\bG^{(k)}_{i_{\pi(1)},\dots,i_{\pi(k)}}
     \,.
\end{equation}
For a symmetric tensor $\bA^{(k)}\in(\bbR^N)^{\otimes k}$ and $\bT\in(\bbR^N)^{\otimes (k-1)}$, we denote by  $\bA^{(k)}\{\bT\}\in\bbR^N$ the vector with components
\begin{equation}
\label{eq:TensorTensor}
    \bA^{(k)}\{\bT\}_i = \frac{1}{(k-1)!}\sum_{1\le i_1,\cdots,i_{k-1}\le N}A^{(k)}_{i,i_1,\cdots,i_{k-1}}T_{i_1\dots i_{k-1}}.
\end{equation}
For $\bu\in\bbR^N$ we denote by $\bA^{(k)}\{\bu\}=\bA^{(k)}\{\bu^{\otimes (k-1)}\}$ the vector with entries
\begin{equation}
\label{eq:vector}
\bA^{(k)}\{\bu\}_i = \frac{1}{(k-1)!}\sum_{1\le i_1,\cdots,i_{k-1}\le N}A^{(k)}_{i,i_1,\cdots,i_{k-1}}u_{i_1}\cdots u_{i_{k-1}}.
\end{equation}
Note that for $\bA^{(k)}$ as in \eqref{eq:Ak-def}, one has 
\[
\bA^{(k)}\{\bu\}=\nabla H_{N,k}(\bu)
\]
where $H_{N,k}$ denotes the part of $H_N$ of total degree $k$.

For $\bu,\bv\in\bbR^N$ we recall from Subsection~\ref{subsec:notation} the notations 
\begin{align*}
    \< \bv\>_N&=N^{-1}\sum_{i\le N} v_i,
    \\
    \< \bu,\bv\>_N
    &= 
    N^{-1}\sum_{i\le N}u_iv_i
    =
    \langle\vec\lambda, \vR(\bu,\bv)\rangle,
    \\
    \|\bu\|_{N}&= \<\bu,\bu\>_N^{1/2}=\sqrt{\sum_s \lambda_s R_s(\bu,\bu)}.
\end{align*}

Given functions $f_{t,s}:\bbR^{t+1}\to\bbR$ of $t+1$
variables for each $s\in\sS$, and $\bv^0,\bv^1,\dots,\bv^t\in\bbR^{N}$, we define $f_t(\bv^0,\bv^1,\dots,\bv^t)\in\mathbb R^N$ component-wise via
\begin{align}
    f_t(\bv^0,\bv^1,\dots,\bv^t)_i=f_{t,s(i)}(v^0_i,\dots,v^t_i).
\end{align}
Finally, for a sequence of vectors $\bw^0,\bw^1,\dots$, we write $\bw^{\le t} = (\bw^0,\bw^1,\dots,\bw^t)$.

To deduce the state evolution result for mixed tensors, we analyze a slightly more general iteration where each homogenous
$k$-tensor is tracked separately, while restricting ourselves to the case where the mixture $\xi$ has finitely many components: $\gamma_{s_1,\dots,s_k} = 0$ for all $(s_1,\dots,s_k)\in \sS^k$ for all $k \ge D +1$ for some fixed $D \ge 2$. We then proceed by an approximation argument to extend the convergence to the general case $D = \infty$.

We begin by introducing the Gaussian process that captures the asymptotic behavior of AMP. 
Define $\xi^k$ to be the degree $k$ part of $\xi$, and 
\[
    \xi^{k,s}=\frac{1}{\lambda_s}\partial_{x_s}\xi^k(x_1,\dots,x_r)
\]
the degree $k-1$ part of $\xi^s$. 
% Moreover we let $\xi^{k,\sS}:\bbR^{\sS}\to\bbR^{\sS}$ be the result of concatenating the functions $\xi^{k,s}$. (This is simply the gradient of $\xi^k$, up to the $\lambda_s$ scaling factors.)

An AMP iteration is specified by Lipschitz functions $f_{t,s}:\bbR^{2(t+1)}\to \bbR$ for each $(t,s)\in\naturals\times \sS$.\footnote{The unusual factor $2$ in the exponent comes from the external randomness vectors $\be^1,\dots,\be^t$.}  
For each iteration $t$, the state of the algorithm is given by vectors
$\bw^t\in \bbR^N$, and $\bz^{k,t}\in \bbR^N$, with $k\in\{2,\dots,D\}$. 
Moreover for each $t$, there is also an external randomness vector $\be^t\in\bbR^N$ with independent coordinates $e^t_i\sim \mu_{t,s(i)}$ from deterministic probability distributions $\big(\mu_{t,s}\big)_{t\geq 0,s\in \sS}$ with finite second moment.
% \footnote{In fact the $N$ vectors $(e^0_i,\dots,e^t_i)$ can have any $s(i)$-dependent law on $\bbR^{t+1}$ with finite $L^2$ norm, though they should be independent of each other.} 
We now start to define the AMP iteration steps (the definition finishes at \eqref{eq:recursive-covariance}). A single step is given by
\begin{align}
\label{eq:AMP-def1}
    \AMP_t
    \lt(\bw^0,\dots,\bw^t
    ;
    \be^0,\dots,\be^t\rt
    )_{k}
    &\equiv
    \bA^{(k)}
    \{
    \f_t
    \}
    -
    \sum_{t'\leq t}
    d_{t,t',k}
    \diamond 
    \f_{t'-1}
    \, ,
    \\
\label{eq:bff}
    \f_t &= f_t(\bw^0,\dots,\bw^t;\be^0,\dots,\be^t),
    \\
\label{eq:AMP-def2}
    d_{t,t',k,s} 
    &\equiv 
    \lt(
    \sum_{s'\in\sS}
    \partial_{x_{s'}}\xi^{k,s}
    \lt(\lt( 
    \E\lt[ 
    F_{t,s} F_{t'-1,s}
    \rt]
    \rt)_{s\in\sS}
    \rt) 
    \times
    \E\lt[ 
    \partial_{W^{t'}_{s'}} F_{t,s'}
    \rt]
    \rt)
    \, 
    ,
    \\
\label{eq:F-def}
    F_{t,s'}
    &\equiv
    f_{t,s'}\big(W^0_{s'},W^1_{s'},\dots, W^t_{s'};E^0_{s'},\dots,E^t_{s'}\big)
    .
\end{align}
A general multi-species tensor AMP algorithm then takes the form:
\begin{align}
\label{eq:TensorAMP}
    \bw^t=\sum_{2\leq k\leq D} \bz^{k,t}\, ,\quad \quad
    \bz^{k,t+1}
    =
    \AMP_t
    \lt(\bw^0,\dots,\bw^t;\be^0,\dots,\be^t\rt)_k\, .
\end{align}

For the right-hand side of \eqref{eq:F-def} to make sense, we must define for each $t\geq 0$ and $s\in \sS$ a distribution over sequences
$(W^0_s,\dots,W^t_s;E^0_s,\dots,E^t_s)$. The latter variables $E^{t'}_s\sim \mu_{t',s}$ are simply taken independent of each other and all other variables. The construction of the $W$ variables is recursive across $t$ as follows. 
For each $2\leq k\leq D$ and $s\in\sS$, we let $U^{k,0}_s\sim \nu_{k,s}$
and construct a centered Gaussian process
\[
    (U^{k,1}_s,U^{k,1}_s,\dots,U^{k,t}_s)
\]
which is independent of $U^{k,0}_s$. The variables $U^{k,t}_s$ and $U^{k',t'}_{s'}$ are independent unless $(k,s)=(k',s')$. It remains to specify the covariance of $(U^{k,t}_s)_{1\leq t\le T}$ which is given recursively by:
\begin{equation}
\label{eq:recursive-covariance}
\begin{aligned}
    \E\big[U^{k,t+1}_s U^{k,t'+1}_{s}\big]
    & = 
    \xi^{k,s}(\Sigma^{k,t,t'})\,;
    \\
    \Sigma^{k,t,t'}_{s}
    &=
    \bbE\lt[
    f_{t,s}(W^0_s,\dots,W^t_s;E^0_s,\dots,E^t_s)
    f_{t',s}(W^0_s,\dots,W^{t'}_s;E^0_s,\dots,E^{t'}_s)
    \rt],\quad \forall s\in \sS
    \\
    W^t_s & \equiv  \sum_{2\leq k\leq D} U^{k,t}_s\, 
    .
\end{aligned}
\end{equation}

The main result, an extension of Proposition \ref{prop:state_evolution}, follows. Below we use $\bbW_2$ to denote the Wasserstein-$2$ distance between probability measures on Euclidean space in any dimension. We say a function $\psi:\bbR^d\to\bbR$ is \textbf{pseudo-Lipschitz} if 
\[
    |\psi(\bw)-\psi(\by)|
    \leq 
    C(1+\|\bw\|+\|\by\|)
    \cdot 
    \|\bw-\by\|
    ,\quad\forall \bw,\by\in\bbR^d.
\]

\begin{theorem}[State Evolution for AMP]
\label{thm:mixedAMP}
Let $\{\bG^{(k)}\}_{k\ge 2}$ be independent standard Gaussian tensors with $\bG^{(k)}\in(\bbR^N)^{\otimes k}$, and define $\bA^{(k)}$ as in \eqref{eq:A-defn}. Fix a sequence of Lipschitz functions $f_{t,s}:\bbR^{k+1}\to\bbR$.
 Let $\bz^{2,0},\cdots \bz^{D,0}\in\bbR^N$ be deterministic vectors and $\bw^0 =\sum_{2\leq k\leq D} \bz^{k,0}$.
Assume that for each $s\in\sS$, the empirical distribution of the vectors 
\[
    (z_i^{2,0},\cdots z_i^{D,0}),\quad i\in \cI_s
\]
converges in $\bbW_2(\bbR^{D-1})$ distance to the law of the vector $(U^{k,0}_s)_{2\le k\le D}$. 

Let $\bw^{t}, \bz^{k,t}$, $t\ge 1$ be given by the \emph{tensor AMP} iteration. Then, for all $s\in\sS$ and $T\ge 1$ and for any pseudo-Lipschitz functions $\psi:\bbR^{D \times T}\to \bbR$ and $\wt\psi:\bbR^T\to\bbR$, we have
\begin{align}
\label{eq:z-U-convergence}
    \plim_{N\to\infty}\frac{1}{N_s}\sum_{i\in\cI_s}\psi \Big((z_i^{k,t})_{ k\le D,t\le T}\Big) 
    &= \E\lt[\psi\big((U^{k,t}_s)_{2\leq k\leq D,t\leq T}\big)\rt]\, 
    ;
    \\
\label{eq:w-W-convergence}
    \plim_{N\to\infty}\frac{1}{N_s}\sum_{i\in\cI_s}\wt\psi \Big((w_i^{t})_{t\le T}\Big) 
    &= \E\lt[\wt\psi\big((W^{t}_s)_{t\leq T}\big)\rt]\,
    .
\end{align}
\end{theorem}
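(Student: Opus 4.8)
The plan is to follow the conditioning-based induction of \cite{ams20}, adding bookkeeping for the species blocks and for the external vectors $\be^t$. We first reduce to $D<\infty$: for general $\xi$ we truncate the mixture at degree $D$, apply the finite-$D$ statement, and let $D\to\infty$, using $\sum_{k\ge 2}2^k\|\Gamma^{(k)}\|_\infty<\infty$ together with Proposition~\ref{prop:gradients-bounded} to see that the omitted high-degree terms change every iterate by $o_D(1)$ in $\|\cdot\|_N$ with exponentially good probability. For $D<\infty$ we argue by induction on $T$: we prove \eqref{eq:z-U-convergence} jointly with the uniform bound $\sup_N \frac1{N_s}\sum_{i\in\cI_s}\bigl(1+\sum_{2\le k\le D,\,t\le T}(z^{k,t}_i)^2\bigr)^p<\infty$ for every $p\ge 1$. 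This moment bound is needed because pseudo-Lipschitz test functions grow at most quadratically, so the convergence of the within-block empirical measure in $\bbW_2$ to the law of $(U^{k,t}_s)_{k,t}$ — which is what the induction actually delivers — upgrades to convergence of the averages in \eqref{eq:z-U-convergence}; the passage to \eqref{eq:w-W-convergence} is then immediate from $\bw^t=\sum_k\bz^{k,t}$ and $W^t_s=\sum_kU^{k,t}_s$.

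The heart of the argument is a Gaussian conditioning lemma for symmetric tensors. Let $\cF_t$ be the $\sigma$-algebra generated by $(\bw^0,\dots,\bw^t)$, $(\be^0,\dots,\be^t)$, and the deterministic initial data, and set $\f_{t'}=f_{t'}(\bw^{\le t'};\be^{\le t'})$, which is $\cF_{t'}$-measurable. Through the symmetrized i.i.d.\ representation \eqref{eq:A-defn}, the only functionals of $\bG^{(k)}$ revealed by $\cF_t$ are the linear images $\bA^{(k)}\{\f_0\},\dots,\bA^{(k)}\{\f_{t-1}\}$; hence the conditional law of $\bA^{(k)}\{\f_t\}$ given $\cF_t$ splits as (i) a deterministic ``memory'' term $\sum_{t'\le t} c_{t,t',k}\diamond\f_{t'-1}$, whose species-indexed coefficients are, up to $o_N(1)$ errors controlled by the induction hypothesis, exactly the Onsager coefficients $d_{t,t',k}$ of \eqref{eq:AMP-def2} (here one uses the formula for $\xi^{k,s}$ and that $\E[\partial_{W^{t'}_{s'}}F_{t,s'}]$ is a within-block empirical average of a Lipschitz derivative); plus (ii) a fresh Gaussian vector independent of $\cF_t$ whose per-species covariance is $\xi^{k,s}(\Sigma^{k,t,t'})$, matching \eqref{eq:recursive-covariance}; plus (iii) an $o_N(1)$ error in $\|\cdot\|_N$. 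Subtracting the Onsager term in \eqref{eq:AMP-def1} cancels (i), so $\bz^{k,t+1}$ equals, to $o_N(1)$ in $\|\cdot\|_N$, the fresh Gaussian of (ii); combining with the induction hypothesis for $\bw^{\le t}$ yields the joint law at level $t+1$, and the moment bound propagates since $\f_t$ is Lipschitz in $(\bw^{\le t};\be^{\le t})$.

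Two routine points. The external vectors $\be^t$ have i.i.d.\ coordinates $e^t_i\sim\mu_{t,s(i)}$ with finite second moment and are independent of all tensors, so they pass unchanged through the conditioning and appear in the limit as the independent coordinates $E^t_s$; their finite-second-moment hypothesis is exactly what keeps the pseudo-Lipschitz averages finite. The species structure is handled by working blockwise over each $\cI_s$: the $\diamond$-products act as species-indexed scalars, the limiting covariances are block-diagonal across species as in \eqref{eq:recursive-covariance}, and $N_s/N\to\lambda_s>0$ guarantees each within-block empirical average concentrates at the usual rate.

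The main obstacle is the identification in steps (i)--(ii): showing that, after conditioning on $\cF_t$, the memory term equals $\sum_{t'\le t}d_{t,t',k}\diamond\f_{t'-1}$ up to $o_N(1)$ with the coefficients \eqref{eq:AMP-def2}, and that the residual converges to the Gaussian with covariance \eqref{eq:recursive-covariance}. For $k=2$ this is the classical conditioning computation of \cite{bolthausen2014iterative,BM-MPCS-2011}; for $k\ge3$ the symmetrization $\sum_{\pi\in S_k}(\bG^{(k)})^{\pi}$ and the fact that the conditioning vectors $\f_{t'}$ themselves depend on the tensor require a careful accounting of which linear functionals of $\bG^{(k)}$ have been exposed at each step. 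This accounting is precisely the content of the tensor state-evolution proof in \cite{ams20}; our task is only to carry the extra species and external-noise indices through it, which does not affect the structure of the argument.
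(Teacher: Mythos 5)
Your high-level strategy (truncate to finite degree $D$, then induct on $t$ via Gaussian conditioning on the exposed linear functionals $\bA^{(k)}\{\f_0\},\dots,\bA^{(k)}\{\f_{t-1}\}$) is the right one and is essentially what the paper does, organized through the intermediate ``Long AMP'' iteration. But the central decomposition in your step (i)--(ii) is wrong as stated, and following it literally would produce the wrong limit. Conditionally on $\cF_t$, the law of $\bA^{(k)}\{\f_t\}$ is \emph{not} ``Onsager-type memory term in $\Span(\f_{t'-1})$ plus a fresh Gaussian of full variance.'' The conditional expectation $\E[\bA^{(k)}\mid\cF_t]\{\f_t\}$ (Lemma~\ref{lem:symregression}, made explicit in \eqref{eq:AParallel}) contains \emph{two} pieces: the Onsager term $\sum_{t'}d_{t,t',k}\diamond\f_{t'-1}$, which \eqref{eq:AMP-def1} cancels, and a component $\sum_{t_1}h_{t,t_1,k}\diamond\qq^{k,t_1+1}$ lying in the span of the \emph{previous iterates} (see \eqref{eq:CondExp}, \eqref{eq:LAMP2}), which is not cancelled and survives in $\bz^{k,t+1}$. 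Correspondingly, the genuinely fresh part is $\cP_t^{\perp}(\tbA^{(k)})\{\f_t\}=\cP_t^{\perp}(\tbA^{(k)})\{(\f_t^{\otimes k-1})_{\perp}\}$, whose conditional variance is $\xi^{k,s}$ evaluated at the norm of the \emph{orthogonal component} $(\f_t^{\otimes k-1})_{\perp}$ only (see \eqref{eq:PAnorm}), not $\xi^{k,s}(\vR(\f_t,\f_t))$. It is exactly the retained span-of-$\qq^{k,t_1}$ piece that supplies the cross-covariances $\E[U^{k,t+1}_sU^{k,t'+1}_s]=\xi^{k,s}(\Sigma^{k,t,t'})$ of \eqref{eq:recursive-covariance}; a vector ``independent of $\cF_t$'' cannot carry them. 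If you cancel only the Onsager term and declare the remainder a fresh independent Gaussian, you conclude the iterates have asymptotically independent increments, which contradicts \eqref{eq:recursive-covariance} (and, e.g., the nontrivial correlations \eqref{eq:id2.0} in the root-finding phase).

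The paper's proof makes this bookkeeping explicit by first defining the LAMP iteration \eqref{eq:LAMP1}--\eqref{eq:LAMP2}, in which the projection and the $h_{t,t_1,k}\diamond\qq^{k,t_1+1}$ memory are written out by hand, proving state evolution for it (Theorem~\ref{thm:SELAMP}, where the variance accounting \eqref{eq:PAnorm}+\eqref{eq:CEnorm} and Lemma~\ref{lem:LemmaPerp} do the work), and only then identifying AMP with LAMP via Stein's identity (Lemma~\ref{lem:ampequalslamp}). Two further points you omit that the paper needs: the regression/identification steps require the well-conditioning and non-degeneracy hypotheses (Assumptions~\ref{as:degree-D}, \ref{as:well-conditioned}), removed afterwards by a perturbation argument rather than being automatic; and the truncation in $D$ is carried out at the level of the mixture $\xi$ before the induction, not via Proposition~\ref{prop:gradients-bounded}. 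Your treatment of the external noise and the species blocks is fine.
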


Note that \eqref{eq:w-W-convergence} (which concerns the actual AMP iterates $\bw^t$) is a special case of \eqref{eq:z-U-convergence} (which is more convenient to prove). Indeed one can take $\psi\lt((z^{k,t})_{k\le D,t\le T}\rt)=\wt\psi\lt(\big(\sum_{k\le D}z^{k,t}\big)_{t\le T}\rt)$. 
In the special case that $c_k =0$ for all $k\ge D+1$, Proposition \ref{prop:state_evolution} follows immediately from Theorem~\ref{thm:mixedAMP} by baking the contribution of $\bh$ explicitly into $f_t$ (since we require $k\geq 2$ above). Proposition \ref{prop:state_evolution} for non-polynomial $\xi$ follows by a standard approximation argument outlined at the end of Subsection~\ref{subsec:further-def-app}.
For the remainder of this Appendix we thus focus on establishing \eqref{eq:z-U-convergence}.

\subsection{Further Definitions}
\label{subsec:further-def-app}

We define the notations
\begin{align*}
    \bW_t
    &=
    \big[
    \,\bw^0~|~\bw^1~|\;\cdots\;|~\bw^t
    \big]\, ,
    \\
    \bE_t
    &=
    \big[
    \,\be^0~|~\be^1~|\;\cdots\;|~\be^t
    \big]\, ,
    \\
    \bZ_{k,t}
    &=
    \big[
    \,\bz^{k,0}~|~\bz^{k,1}~|\,\cdots\;|~\bz^{k,t}\,
    \big].
\end{align*}
Given a $N\times (t+1)$ matrix such as $\bW_t$, and a tensor $\bA^{(k)}\in(\bbR^{N})^{\otimes k}$, we write 
$\bA^{(k)}\{\bW_t\}$ for the $N\times (t+1)$ matrix with columns $\bA^{(k)}\{\bw^0\}$, \dots, $\bA^{(k)}\{\bw^t\}$:
\begin{align*}
\bA^{(k)}\{\bW_t\}&=\Big[\bA^{(k)}\{\bw^0\}\Big|\bA^{(k)}\{\bw^1\}\Big|\;\cdots\;\Big|\bA^{(k)}\{\bw^t\}\Big]\, .
\end{align*}
We will write $\f_t=f_t(\W_t,\bE_t)=f_t(\bw^0,\dots,\bw^t,\be^0,\dots,\be^t)$ and also set 
\begin{align}
\label{eq:Ydef1}
    \by_{k,t+1}(\ZZ_{k,t})
    &=
    \A_k\lt\{\f_t\rt\} 
    = 
    \bz^{k,t+1}
    +
    \sum_{t_1\leq t} 
    d_{t,t_1,k}
    \diamond 
    \f_{t_1-1}\, ,
    \\
\label{eq:Ydef2}
    \bY_{k,t} 
    &= 
    [\by_{k,1}|\;\cdots\;|\by_{k,t}]
    \, ,
    \\
\nonumber
    \by_{t}(\ZZ_{k,t})&=\sum_{2\leq k\leq D}  \by_{k,t}(\ZZ_{k,t})\, . 
\end{align}
%
% For any positive integer $k$ and $k\times T$ matrix $\M$ of length $n$ vectors we define $\F_t^{k}(\M)$ to be the length $t+1$ vector of $k$-tensors 
% %
% \begin{align}
% \F_t^k(\M)=[f_0(\M)^{\otimes k}|f_1(\M)^{\otimes k}|\;\cdots\;|f_t(\M)^{\otimes k}].
% \end{align}
%
We also define an associated $(t+1)\times (t+1)$ Gram matrix $\bG_{\xi^{k,s}}=\bG_{\xi^{k,s},t}$ via 
\begin{equation}
\label{eq:bG-def}
    (\bG_{\xi^{k,s}})_{t_1,t_2}
    =
    \xi^{k,s}\big(\vR( 
    \f_{t_1},\f_{t_2})\big).
\end{equation}
The dependence of $\bG_{\xi^{k,s},t}$ on $t$ will often be suppressed (this dependence is relevant when inverting the matrix $\bG_{\xi^{k,s},t}$ but not for defining individual entries).
% We define a second matrix 
% $\bGG_{\xi^{k,s}}$, also of size $(t+1)\times (t+1)$, via (recall \eqref{eq:zeta-defn})
% \begin{equation}
% \label{eq:bGG-def}
%     (\bGG_{\xi^{k,s}})_{t_1,t_2}
%     =
%     \sum_{s'\in\sS}
%     \partial_{x_{s'}}\xi^{k,s}
%     \lt(\vR\big( \f_{t_1}, \f_{t_2}\big)\rt)
%     R_{s'}(\f_{t_1},\bm^{t_2})
%     .
% \end{equation}
Finally, we let $\cF_t$ denote the $\sigma$-algebra generated by all iterates up to time $t$:
\begin{equation}
\label{eq:cFt-defn}
    \cF_t
    =
    \sigma\big(\{\bz_{k,t_1},\bw^{t_1},\be^{t_1},\f_{t_1}\}_{k\le D,t_1\le t}\big)\, .
\end{equation}

Throughout the proof of state evolution we make the following simplifying assumptions:

\begin{assumption}
\label{as:degree-D}
    $\xi$ is a degree $D$ polynomial with all coefficients $\gamma_{s_1,\dots,s_k}$ for $2\leq k\leq D$ strictly positive.
\end{assumption}

\begin{assumption}
\label{as:well-conditioned}
    Each matrix $\bG_{\xi^{k,s},t}$ is well-conditioned, i.e. 
    \[
    C^{-1}\le \sigma_{\min}(\bG_{\xi^{k,s},t})\le \sigma_{\max}(\bG_{\xi^{k,s},t})\le C
    \]
    for all $t\le T$. Here $\bG_{\xi^{k,s},t}$ is defined based on iterates that will appear in Theorems~\ref{thm:SELAMP} and \ref{lem:ampequalslamp}. The same holds for $\cL_{k,t}$ as defined in \eqref{eq:cL-k-t}.
\end{assumption}

It is a standard argument that to establish Proposition~\ref{prop:state_evolution}, it suffices to do so under the above assumptions. The reason is that one can always slightly perturb both $\xi$ and the non-linearities $f_{t,s}$ to ensure the assumptions hold. Then suitable continuity properties suffice to transfer all asymptotic guarantees. We refer the reader to \cite[Appendices A.8 and A.9]{ams20} for the arguments in the single-species case, still in the generality of mixed tensors. (In the more common setting $D=2$ of just a random matrix this step is also common for state evolution proofs, see e.g. \cite[Section 4.2.1]{javanmard2013state}.) The corresponding extension in our setting is completely analogous and omitted.

\subsection{Preliminary Lemmas}

The next lemma has several parts. All are elementary Gaussian calculations so their proofs are omitted.

\begin{lemma}
\label{lem:4}
For any deterministic $\bu,\bv\in \bbR^N$ and $\bA^{(k)}$ defined by \eqref{eq:A-defn} we have:
\begin{enumerate}
    \item Letting $g_0\sim\normal(0,1)$ be independent of $\bg\sim\normal(0,\id_N)$, we have
%
% \begin{align}
% \bG^{(k)}\{\bu\}\ed \sqrt{k} \|\bu\|_{N}^{k-1} \, \bg+  \sqrt{k(k-1)}  \|\bu\|_{N}^{k-2}\frac{\bu}{\sqrt{N}} \, g_0\, .
% \end{align}
\begin{align}
    \bA^{(k)}\{\bu\}\ed
    \sum_{s\in\sS}
    \bg_s
    \sqrt{\xi^{k,s}(\vR(\bu,\bu))}
    +  
    \frac{g_0}{\sqrt N}
    \sum_{s\in\sS}
    \bu_s
    \sqrt{
        \sum_{s'\in\sS}
        \partial_{x_{s'}} \xi^{k,s}
        \Big(
        \vR(\bu,\bu)
        \Big)
    }
    \, .
\end{align}
\item Let $g_0,g_1,\dots,g_r\sim\normal(0,1)$ be independent. We have (jointly across $s\in\sS$)
%
% \begin{align}
% \sqrt{N}\<\bv,\bG^{(k)}\{\bu\}\>_N\ed \sqrt{k}  \|\bu\|_{N}^{k-1}\|\bv\|_{N} \, g_1+  \sqrt{k(k-1)}  \|u\|_{N}^{k-2}\<\bu,\bv\>_N \, g_0\, .
% \end{align}
\begin{align}
    \sqrt{\lambda_s N} R_s(\bv,\bA^{(k)}\{\bu\})
    \ed 
    \sqrt{\xi^{k,s}(\vR(\bu,\bu))\cdot \vR(\bv,\bv)}g_s
    +  
    \sqrt{
        \sum_{s'\in\sS}
        \partial_{x_{s'}} \xi^{k,s}
        \Big(
        \vR(\bu,\bu)
        \Big)
    }
    R_s(\bu,\bv) \, g_0\, .
\end{align}
\item For $s\in\sS$:
 \[
    R\lt(\bA^{(k)}\{\bu\},\bA^{(k)}\{\bv\}\rt)_s\simeq \xi^{k,s}\lt(\vR( \bu,\bv)\rt).
\]
% $\<\bG^{(k)}\{\bu\},\bG^{(k)}\{\bv\}\>_N\simeq k\< \bu,\bv\>_N^{k-1}$.
%
\item For a deterministic symmetric tensor $\bT\in(\bbR^N)^{\otimes k-1}$, the vector
$\bA^{(k)}\{\bT\}$ is centered Gaussian. Its covariance is given by
\begin{align*}
  \E\big[\bA^{(k)}\{\bT\}_i\bA^{(k)}\{\bT\}_j\big] 
  &= 
  \langle \xi^{k,s(i)}\diamond \bT,\, \bT\rangle_N
  \cdot 1\{i=j\} 
  \\
  &
  \quad
  +\frac{k(k-1)}{N^{k-1}}\,
  \sum_{i_1,\dots,i_{k-2}=1}^N
  \gamma_{i,i_1,\dots,i_{k-2}}
  \gamma_{j,i_1,\dots,i_{k-2}}
  T_{i,i_1,\dots,i_{k-2}}
  T_{j,i_1,\dots,i_{k-2}}\, . 
\end{align*}
\item Let $\bP\in\bbR^{N\times N}$ be the orthogonal projection onto a (deterministic) subspace $S\subseteq\bbR^N$ with $d=\dim(S)=O(1)$. Then
\[
    \|\bP\bG^{(k)}\{\bu\} - \bG^{(k)}\{\bu\}\|_2 /\|\bG^{(k)}\{\bu\}\|_2\simeq 0.
\]
\end{enumerate}
\end{lemma}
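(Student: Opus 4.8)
The plan is to treat part (4) as the fundamental computation and obtain parts (1)--(3) from it by specialization, with part (5) handled separately.

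For part (4): since $\bA^{(k)}\{\bT\}$ is a fixed linear functional of the i.i.d.\ standard Gaussian entries of $\bG^{(k)}$, it is automatically a centered Gaussian vector, so only its covariance must be identified. Expanding the definition \eqref{eq:A-defn}, $\bA^{(k)}\{\bT\}_i$ is a sum over ordered tuples $(i,i_1,\dots,i_{k-1})$ and permutations $\pi\in S_k$ of $\gamma$-weighted terms proportional to $N^{-(k-1)/2}\gamma_{s(i),s(i_1),\dots,s(i_{k-1})}\bG^{(k)}_{(i,i_1,\dots,i_{k-1})_\pi}T_{i_1,\dots,i_{k-1}}$. Computing $\E[\bA^{(k)}\{\bT\}_i\bA^{(k)}\{\bT\}_j]$ then amounts to counting which pairs of (tuple, permutation) on the two sides produce the \emph{same} ordered index tuple of $\bG^{(k)}$, since distinct ordered tuples index independent entries and $\E[\bG^{(k)}_{\vec a}\bG^{(k)}_{\vec b}]=\mathbf 1\{\vec a=\vec b\}$. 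Using the symmetry of $\Gamma^{(k)}$ and of $\bT$ to collapse equivalent configurations, these pairs split into two families: (i) those in which the distinguished index $i$ on the left is matched with the distinguished index on the right, which forces $i=j$ and, after collapsing the remaining $(k-1)!$ matchings, contributes exactly the diagonal term $\langle \xi^{k,s(i)}\diamond\bT,\bT\rangle_N\cdot\mathbf 1\{i=j\}$ (by the definitions of $\xi^{k,s}$ and $\diamond$); and (ii) those in which $i$ is matched to one of the $k-1$ non-distinguished slots on the right (and symmetrically $j$ on the left), which after relabeling assembles into the stated correction with prefactor $\frac{k(k-1)}{N^{k-1}}$. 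One then checks that configurations with further index coincidences are already subsumed, so nothing else survives.

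Parts (1)--(3) are specializations. For part (1), apply part (4) with $\bT=\bu^{\otimes(k-1)}$: the diagonal entry becomes $\xi^{k,s(i)}(\vR(\bu,\bu))$, and the correction becomes $\tfrac1N u_i u_j$ times a factor depending only on $(s(i),s(j))$ which, by the explicit polynomial form of $\xi^{k,s}$, equals $\sqrt{\sum_{s'}\partial_{x_{s'}}\xi^{k,s(i)}(\vR(\bu,\bu))}\cdot\sqrt{\sum_{s'}\partial_{x_{s'}}\xi^{k,s(j)}(\vR(\bu,\bu))}$; since both $\bA^{(k)}\{\bu\}$ and the displayed representation are centered Gaussian, matching covariances proves the identity. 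For part (2), write $\sqrt{\lambda_s N}\,R_s(\bv,\bA^{(k)}\{\bu\})=(\lambda_s N)^{-1/2}\langle\bv_s,\bA^{(k)}\{\bu\}_s\rangle$, substitute the representation from part (1), and use that $\langle\bv_s,\bg_s\rangle\ed\|\bv_s\|_2\cdot(\text{standard normal})$ (jointly independent across species, while $g_0$ is shared) together with $\langle\bv_s,\bu_s\rangle=\lambda_s N\,R_s(\bu,\bv)$. For part (3), write $R_s(\bA^{(k)}\{\bu\},\bA^{(k)}\{\bv\})=(\lambda_s N)^{-1}\sum_{i\in\cI_s}\bA^{(k)}\{\bu\}_i\bA^{(k)}\{\bv\}_i$ and apply the bilinear version of part (4): the diagonal term averages to $\xi^{k,s}(\vR(\bu,\bv))$ while the correction is $O(1/N)$, so the expectation converges; a second-moment bound (the quantity is a fixed-degree polynomial in Gaussians, so standard hypercontractive concentration applies) upgrades this to convergence in probability.

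Part (5) reduces to the observation that $\bG^{(k)}\{\bu\}$ is, up to an $O(1)$-rank correction, an isotropic Gaussian vector in $\bbR^N$ with per-coordinate scale of order $\|\bu\|_2^{k-1}$ (directly from $\E[\bG^{(k)}_{\vec a}\bG^{(k)}_{\vec b}]=\mathbf 1\{\vec a=\vec b\}$ on ordered tuples). Hence $\|\bG^{(k)}\{\bu\}\|_2^2=\Theta_{\bbP}(N\,\|\bu\|_2^{2(k-1)})$, whereas for an orthonormal basis $e_1,\dots,e_d$ of $S$ with $d=\dim S=O(1)$ each $\langle e_l,\bG^{(k)}\{\bu\}\rangle$ is Gaussian with variance $O(\|\bu\|_2^{2(k-1)})$, so the part of $\bG^{(k)}\{\bu\}$ seen by $\bP$ carries only a $\chi^2$ with $O(1)$ degrees of freedom; dividing by $\|\bG^{(k)}\{\bu\}\|_2$ gives a ratio of order $\sqrt{d/N}\to 0$, with standard $\chi^2$/Gaussian-chaos concentration making all of this high-probability. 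The main obstacle is the bookkeeping in part (4): enumerating with correct multiplicities exactly which pairs of index tuples and permutations in $S_k$ give coincident $\bG^{(k)}$-entries and verifying that they assemble precisely into the stated diagonal-plus-correction formula with no residual terms; everything downstream is substitution plus routine Gaussian concentration.
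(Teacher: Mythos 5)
The paper gives no proof to compare against: it dismisses all five parts with ``elementary Gaussian calculations so their proofs are omitted.'' Your overall strategy --- compute the covariance of $\bA^{(k)}\{\bT\}$ directly by matching ordered index tuples of $\bG^{(k)}$ across the two symmetrizing sums, then obtain (1)--(3) by specializing $\bT=\bu^{\otimes(k-1)}$ and contracting, with (5) handled by the usual ``Gaussian chaos is asymptotically orthogonal to any $O(1)$-dimensional deterministic subspace'' argument --- is certainly the intended one, and parts (3), (4), (5) are fine modulo the bookkeeping you acknowledge. One small caveat on (4): matchings in which the tuple $(i,i_1,\dots,i_{k-1})$ has repeated entries produce \emph{extra} pairs $(\pi,\pi')$ beyond the $k!$ generic ones, so they are not ``subsumed''; they contribute additional terms which are lower order and harmless for every $\simeq$ statement downstream, but the displayed covariance identity should be read modulo such $O(1/N)$ corrections rather than as exact.

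The genuine gap is in your derivation of part (1) from part (4). You assert that the off-diagonal correction is $\tfrac1N u_iu_j$ times a factor that \emph{factorizes} as $\sqrt{\sum_{s'}\partial_{x_{s'}}\xi^{k,s(i)}(\vR(\bu,\bu))}\cdot\sqrt{\sum_{s'}\partial_{x_{s'}}\xi^{k,s(j)}(\vR(\bu,\bu))}$. Carrying out the matching computation, the cross term comes from configurations where $i$ sits in a non-distinguished slot of the $j$-factor and vice versa, so both $\gamma$-coefficients equal $\gamma_{s(i),s(j),s(i_1),\dots,s(i_{k-2})}$ and the correction is proportional to $u_iu_j\,\partial_{x_{s(j)}}\xi^{k,s(i)}(\vR(\bu,\bu))/\lambda_{s(j)}$, i.e.\ to the entries of the symmetric matrix $\big(\partial^2_{x_s x_{s'}}\xi^{k}/(\lambda_s\lambda_{s'})\big)_{s,s'}$. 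Writing this as a product $d_{s(i)}d_{s(j)}$ is exactly the equality case of Cauchy--Schwarz for the underlying Gram structure: it holds for $r=1$, but for $r\ge 2$ the correction matrix is generically of rank larger than one (indeed need not even be positive semidefinite entrywise in species space), so a single shared scalar $g_0$ cannot reproduce the covariance exactly, and the identity you invoke is false as stated. The honest version of (1) (and hence (2)) is that $\bA^{(k)}\{\bu\}$ equals the isotropic-per-species bulk plus a centered Gaussian correction supported on $\Span(\bu_s)_{s\in\sS}$ with coefficients governed by the cross-derivatives --- a rank-$O(1)$ perturbation of relative norm $O(N^{-1/2})$. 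Since every downstream use of (1)--(2) only needs this qualitative form (that is precisely what part (5) encodes), nothing in the paper breaks, but as a proof of the exact distributional identity displayed in part (1) your factorization step fails and should either be replaced by the cross-derivative form with a Gaussian vector of dimension $O_r(1)$ in place of $g_0$, or downgraded to an approximate statement.
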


We next develop a formula for the conditional expectation of a Gaussian tensor $\bA^{(k)}$ given
a collection of linear observations.
We set $\bD$ to be the $t\times t\times t$ tensor with entries $D_{ijk}=1$ if $i=j=k$ and $D_{ijk}=0$ otherwise.
\begin{lemma}
\label{lem:symregression}
Recalling \eqref{eq:cFt-defn}, let $\E\{\bA^{(k)}|\cF_t\}$.
Equivalently $\E\{\bA^{(k)}|\cF_t\}$ is the conditional expectation of $\bA^{(k)}$ given the linear-in-$\bA^{(k)}$ observations
\begin{align}
\bA^{(k)}\{\f_{t'}\}=\by_{k,t'+1} \, \;\;\; \mbox{ for $s\in\{0,\dots,t-1\}$.} \label{eq:LinearConstraint}
\end{align}
Then we have for $i_1,i_2,\dots,i_k\leq n$,  
\begin{align}
\label{eq:SymmRegression}
    \E[\bA^{(k)}|\cF_t]_{i_1,i_2,\dots,i_k}= 
    \sum_{j=1}^k \sum_{0\leq t_1,t_2\leq t-1} (\hZZ_{k,t})_{i_j,t_2}\cdot (\bG^{-1}_{\xi^{k,s},t-1})_{t_2,t_1}\cdot (\f_{t_1,i_1}\cdots \f_{t_1,i_{j-1}}\f_{t_1,i_{j+1}}\cdots \f_{t_1,i_{k}})\, .
\end{align}
Here, the matrix $\hZZ_{k,t}\in\bbR^{N\times t}$ is defined as the solution of a system of linear equations as follows.
Define the linear operator $\cT_{k,t}:\bbR^{N\times t}\to\bbR^{N\times t}$ by letting, for $i\leq N$, $0\leq t_3\leq t-1$:
\begin{align}
\label{eq:Tdef}
    [\cT_{k,t}(\bZ)]_{i,t_3}
    &= 
    \sum_{j=1}^N 
    \sum_{0\leq t_1,t_2\leq t-1} 
    (\f_{t_2})_{i} (\f_{t_2})_{j}
    \lt(
    (\bG_{\xi^{k,s(i)},t-1}^{-1})_{t_2,t_1} 
    \partial_{s(j)}\xi^{k,s(i)}\big(\vR(\f_{t_2},\f_{t_3})\big)
    \rt)
    \diamond
    (\bZ)_{j,t_1}\,
    .
\end{align}
Then $\hZZ_{k,t}$ is the unique solution of the following linear equation (with $\bY_{k,t}$ defined as per \eqref{eq:Ydef1})
\begin{align}
\hZZ_{k,t}+\cT_{k,t}(\hZZ_{k,t}) =\bY_{k,t}.\label{eq:ZZ-eq}
\end{align}

(Here, $\hZZ_{k,t} = [\hat{\bz}_{k,0},\cdots,\hat{\bz}_{k,t-1}]$ and $\bY_{k,t} = [\hat{\by}_{k,1},\cdots,\hat{\by}_{k,t}]$ have dimensions $N \times t$.)
\end{lemma}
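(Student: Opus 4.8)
The plan is to treat $\bA^{(k)}$ as a centered Gaussian tensor and identify $\E[\bA^{(k)}\mid\cF_t]$ by Gaussian linear regression onto the observations \eqref{eq:LinearConstraint}. Recall the general fact that for a centered Gaussian $\bA$ and a linear map $L$, the conditional expectation $\E[\bA\mid L\bA=b]$ is the unique tensor $\bB$ that (i) satisfies $L\bB=b$ and (ii) lies in the range of $\cov(\bA)\circ L^{*}$; equivalently $\bB$ is the $\cov(\bA)$-orthogonal projection of $\bA$ onto the finite-dimensional subspace dual to the constraints. Here $L$ is the map $\bA^{(k)}\mapsto\big(\bA^{(k)}\{\f_{t'}\}\big)_{0\le t'\le t-1}\in\bbR^{N\times t}$, so the dual subspace is naturally parametrized by matrices $\bZ\in\bbR^{N\times t}$, and the proof amounts to writing this projection explicitly and then imposing (i).

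For step (ii) I would compute the adjoint $L^{*}$ (which sends $\bZ$ to the symmetrization of $\sum_{t'}e_i\otimes\f_{t'}^{\otimes(k-1)}$ weighted by $Z_{i,t'}$) and then apply the covariance of $\bA^{(k)}$ from Lemma~\ref{lem:4}(4). Because $\Gamma^{(k)}\diamond\bG^{(k)}$ has independent entries, $\cov(\bA^{(k)})$ acts diagonally on the ``free'' slot and through $\xi^{k,s}$-contractions on the remaining $k-1$ slots. Carrying this out and absorbing the Gram factor $\big(\xi^{k,s}(\vR(\f_{t_1},\f_{t_2}))\big)_{t_1,t_2}=\bG_{\xi^{k,s},t-1}$ into its inverse produces exactly the stated form
\[
    \E[\bA^{(k)}\mid\cF_t]_{i_1,\dots,i_k}
    =\sum_{j=1}^k\sum_{0\le t_1,t_2\le t-1}(\hZZ_{k,t})_{i_j,t_2}\,(\bG^{-1}_{\xi^{k,s},t-1})_{t_2,t_1}\,\prod_{m\ne j}\f_{t_1,i_m}
\]
for some matrix $\hZZ_{k,t}$: the sum over $j$ is the symmetrization over which slot carries the dual variable, and passing from the raw dual variable to $\hZZ_{k,t}$ is a change of variables through $\bG^{-1}_{\xi^{k,s},t-1}$.

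It then remains to impose the constraints $\bA^{(k)}\{\f_{t_3}\}=\by_{k,t_3+1}$, i.e. $L\bB=\bY_{k,t}$. Substituting the ansatz into $\bA^{(k)}\{\f_{t_3}\}$ and expanding, the term in which the dual slot $i_j$ coincides with the free index $i$ contributes $\hZZ_{k,t}\,\bG^{-1}\,\bG=\hZZ_{k,t}$ (the Gram matrix cancels its inverse), while the $k-1$ terms in which the dual slot is one of the contracted indices contribute the cross term: each such contraction consumes one of the $k-1$ copies of $\f_{t_3}$, leaving a degree-$(k-2)$ contraction, which is exactly a $\partial_{x_{s'}}\xi^{k,s}$, and collecting these reproduces precisely the operator $\cT_{k,t}$ of \eqref{eq:Tdef}. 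Hence $\hZZ_{k,t}$ must solve $\hZZ_{k,t}+\cT_{k,t}(\hZZ_{k,t})=\bY_{k,t}$, which is \eqref{eq:ZZ-eq}. Existence of a solution is then automatic since $\E[\bA^{(k)}\mid\cF_t]$ exists and is of the posited form; for uniqueness one checks that the map $\hZZ_{k,t}\mapsto\E[\bA^{(k)}\mid\cF_t]$ above is injective on $\bbR^{N\times t}$ (one recovers $\hZZ_{k,t}\bG^{-1}_{\xi^{k,s},t-1}$, hence $\hZZ_{k,t}$, by partially contracting the tensor against the $\f_{t_1}$'s, using that $\bG_{\xi^{k,s},t-1}$ is well-conditioned, Assumption~\ref{as:well-conditioned}), so any two solutions of \eqref{eq:ZZ-eq} yield the same conditional expectation and therefore coincide.

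The main obstacle I expect is the combinatorial bookkeeping in the previous paragraph: correctly tracking the symmetrization factors $1/(k-1)!$, the multiplicity $k-1$ of the cross contractions, and the $\diamond$ species weights, so that the cross terms assemble \emph{exactly} (not merely up to constants) into $\cT_{k,t}$ as written in \eqref{eq:Tdef}, together with verifying the degree reduction $\xi^{k,s}\to\partial_{x_{s'}}\xi^{k,s}$ is accounted for correctly. Once this identification is in place, the Gaussian-conditioning skeleton and the invertibility argument via Assumption~\ref{as:well-conditioned} are routine and parallel to the single-species treatment in \cite{ams20}.
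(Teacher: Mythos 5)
Your proposal is correct and follows essentially the same route as the paper: the paper phrases the Gaussian conditioning as a minimum-weighted-Frobenius-norm problem over the affine constraint set, solved by Lagrange multipliers to get the symmetrized rank-one ansatz, which is exactly your ``range of $\cov\circ L^{*}$'' parametrization after the same change of variables through $\bG^{-1}_{\xi^{k,s},t-1}$. The remaining step in both arguments is the identical direct computation of $\hbA^{(k)}\{\f_{t_1}\}$, whose diagonal contraction cancels the Gram matrix against its inverse to give $\hZZ_{k,t}$ and whose cross contractions assemble into $\cT_{k,t}(\hZZ_{k,t})$, yielding \eqref{eq:ZZ-eq}.
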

%
%\am{Again, not clear the time index for $\bY_{k,t}$ is correct.}
%\ms{Right, changed}

The above formulas for $\E[\bA^{(k)}|\cF_t]$ and $\cT_{k,t}$ are rather complicated. In \cite[Appendix A]{ams20} the reader may find helpful tensor network diagrams for the single-species case. Unfortunately it is less clear how to draw a corresponding tensor network with multiple species.

\begin{proof}[Proof of Lemma \ref{lem:symregression}]
Let $\cV_{k,t}$ be the affine space of symmetric tensors satisfying the constraint \eqref{eq:LinearConstraint}.
The conditional expectation $\E[\bA^{(k)}|\cF_t]$ is the tensor with minimum weighted Frobenius norm $\|\cdot\|_{F,\xi^k}$ in the affine space $\cV_{k,t}$, given by
\begin{equation}
\label{eq:Gammak-inner-product}
    \|\bA\|_{F,\xi^k}^2
    =
    \langle 
    (\Gamma^{(k)})^{-1}\diamond \bA
    ,
    (\Gamma^{(k)})^{-1}\diamond\bA
    \rangle.
\end{equation}
Here $(\Gamma^{(k)})^{-1}$ is the entry-wise inverse of $\Gamma^{(k)}$, which exists by Assumption~\ref{as:degree-D}.

By Lagrange multipliers, there exist vectors $\bm^1,\dots,\bm^t\in\bbR^N$ such that $\E[\bA^{(k)}|\cF_t]=\hbA^{(k)}$ equals
\begin{align}
    \hbA^{(k)}_t \equiv\Gamma^{(k)}\diamond
    \sum_{t'=0}^{t-1}\sum_{j=1}^k \underbrace{\f_{t'}
    \otimes 
    \cdots 
    \otimes 
    \f_{t'}}_{\mbox{$j-1$ times}}\otimes\bm^{t'} 
    \otimes
    \underbrace{\f_{t'}\otimes \cdots \otimes \f_{t'}}_{\mbox{$k-j$ times}}\, .
\end{align}

Also by Lagrange multipliers, if a tensor $\hbA^{(k)}$ is of this form (for some choice of vectors $\bm^1,\dots,\bm^t$) and satisfies the constraints $\hbA^{(k)}\{\f_{t'}\}=\by_{k,t'+1}$
for $s< t$, then this tensor is unique and equals $\E[\bA^{(k)}|\cF_t]$.
Without loss of generality, we write
\begin{align}
\bm^{t_1}_i =  \sum_{t_2=0}^{t-1}(\bG^{-1}_{\xi^{k,s(i)},t-1})_{t_1,t_2}\hbz^{t_2}_i\, , \;\;\;\; 
\hZZ_{k,t}= [\hbz^1|\;\cdots\;|\hbz^t]\, .
\end{align}
By direct calculation we obtain that for each $i\in [N]$,
\begin{align}
\nonumber
    \big(\hbA^{(k)}_t\{\f_{t_1}\}\big)_i &= \sum_{t_2=0}^{t-1} (\bG_{\xi^{k,s(i)},t-1})_{t_1,t_2}(\bm^{t_2})_i
    + 
    \sum_{t_2=0}^{t-1} 
    \sum_{s'\in\sS}
    \lt(\partial_{s'}
    \xi^{k,s(i)}(\vR(\f_{t_1},\f_{t_2}))
    R_{s'}(\f_{t_1},\bm^{t_2})
    \rt)
    (\f_{t_2})_i
    \\
\label{eq:SumT}f
    &= 
    (\hbz_{t_1})_i
    +
    \sum_{t_2=0}^{t-1} 
    % (\bGG_{\xi^{k,s(i)},t-1})_{t_1,t_2}
    \sum_{s'\in\sS}
    \lt(\partial_{s'}
    \xi^{k,s(i)}(\vR(\f_{t_1},\f_{t_2}))
    R_{s'}(\f_{t_1},\bm^{t_2})
    \rt)
    (\f_{t_2})_i
    \, .
\end{align}

% \begin{align}
% %
% \hbA^{(k)}_t\{\f_{t'}\} &= \sum_{r=0}^{t-1} (\bG_{k-1,t-1})_{s,r}\bm^r+(k-1)  \sum_{r=0}^{t-1} (\bG_{k-2,t-1})_{s,r}\<\f_{t'},\bm^r\>\f_r\\
% &= \hbz_s+(k-1)  \sum_{r=0}^{t-1} (\bG_{k-2,t-1})_{s,r}\<\f_{t'},\bm^r\>\f_r\, .\label{eq:SumT}
% %
% \end{align}
%
We next stack these vectors as columns of an $N\times t$ matrix. The first term yields $\hZZ_{k,t}$. Moreover the second term
coincides with $\cT_{k,t}(\hZZ_{k,t})$ by rearranging the order of sums in \eqref{eq:SumT}. Hence
\begin{align}
    \big[\hbA^{(k)}_t\{\f_{0}\},\cdots,\hbA^{(k)}_t\{\f_{t-1}\}\big] 
    & =  
    \hZZ_{k,t} + \cT_{k,t}(\hZZ_{k,t})\, .
\end{align}
This in turn implies that the equation determining $\hZZ_{k,t}$ takes the form \eqref{eq:ZZ-eq}.
\end{proof}

\subsection{Long AMP}
\label{sec:LAMP}

As an intermediate step towards proving Theorem \ref{thm:mixedAMP}, we introduce a new iteration that we call Long AMP (LAMP), following 
\cite{berthier2019state}. This iteration is less compact but simpler to analyze. For each $k\le D$, let  
$\cS_{k,t}\subseteq (\bbR^N)^{\otimes k}$ be the linear subspace of tensors $\bT$ that are symmetric and such that
$\bT\{\f_{t_1}\}=0$ for all $t_1<t$.   We denote by $\cP_t^{\perp}(\bA^{(k)})$ the projection of $\bA^{(k)}$ 
onto $\cS_{k,t}$, in the inner product space \eqref{eq:Gammak-inner-product} corresponding to $\Gamma^{(k)}$. 
We then define the LAMP mapping 
\begin{align}
\label{eq:LAMP1}
    \LAMP_t\lt(\bv^{\le t}\rt)_{k}
    &\equiv
    \cP_t^{\perp} (\bA^{(k)})
    \{\f_t\}
    +
    \sum_{0\leq t_1\leq t}  
    h_{t,t_1-1,k}
    \diamond 
    \qq^{k,t_1},
    \\
\label{eq:LAMP2}
    h_{t,t_1,k,s}
    &\equiv 
    \sum_{0\le t_2\le t-1}
    \big[\bG_{\xi^{k,s},t-1}^{-1}\big]_{t_1,t_2}
    \big[\bG_{\xi^{k,s},t}\big]_{t_2,t}
    ,~~~ h_{t,-1,k}=0. 
\end{align}
Here we use similar notations $\f_t = f_t(\VV_t;\bE_t)$ and $\bG_{\xi^{k,s},t}$ as before (recall \eqref{eq:bG-def}), and take the vectors $\be^t$ as before. However the quantities $\f_t,\bG_{\xi^{k,s},t}$ are now different: they are computed using the vectors $\bv^0,\dots,\bv^t$ using the recursion:
\begin{align}
    \bv^t=\sum_{2\leq k\leq D} \qq^{k,t}\, ,\;\;\;\;\;\;\;
    \qq^{k,t+1}=\LAMP_t\lt(\bv^{\le t}\rt)_{k}\, . 
\end{align}

Following \cite{berthier2019state,ams20}, we first establish state evolution for LAMP (under the non-degeneracy Assumption~\ref{as:degree-D}), and then deduce the result for the original AMP.
In analyzing LAMP we use notations analogous to the ones introduced for AMP. In particular:
\begin{align}
    \VV_{t}
    &=
    [\bv_{1}|\bv_{2}|\dots|\bv_{t}]
    \\
    \bQ_{k,t}
    &=
    [
    \qq_{k,1}^{\otimes k}|
    \qq_{k,2}^{\otimes k}|
    \dots|
    \qq_{k,t}^{\otimes k}
    ].
\end{align}

\subsection{State Evolution for Long AMP}

\begin{theorem}
\label{thm:SELAMP}
  Under the assumptions of Theorem \ref{thm:mixedAMP}, let $\qq^{2,0},\cdots \qq^{D,0}\in\bbR^N$
be deterministic vectors and $\bv^0 =\sum_{2\leq k\leq D} \bq^{k,0}$.
Assume that the uniform empirical distribution of the $N$ vectors $\{(q_i^{2,0},\cdots, q_i^{D,0})\}_{i\leq N}$ converges 
in $\bbW_2$ distance to the law of the vector $(U^{k,0})_{2\le k\le D}$.

Further we assume there is a constant $C<\infty$ such that for all $t\le T$:
\begin{itemize}
\item[$(i)$] The matrices $\bG_{\xi^{k,s},t}= \bG_{\xi^{k,s},t}(\VV)$ are uniformly well-conditioned as guaranteed by Assumption~\ref{as:well-conditioned}.
\item[$(ii)$] Let the linear operator $\cT_{k,t}:\bbR^{N\times t}\to\bbR^{N\times t}$ be defined as per \eqref{eq:Tdef}, with $\bG_{\xi^{k,s},t} = \bG_{\xi^{k,s},t}(\VV,\bE)$,
and  $\f_t=f_t(\VV,\bE)$, and define 
\begin{equation}
\label{eq:cL-k-t}
\cL_{k,t} = {\boldsymbol 1}+\cT_{k,t}.
\end{equation}
Then $C^{-1}\le \sigma_{\min}(\cL_{k,t})\le \sigma_{\max}(\cL_{k,t})\le C$.
\end{itemize}

Then the following statements hold for any $t\le T$ and sufficiently large $N$:
\begin{enumerate}[label=(\alph*)]
\item Correct conditional law: 
\begin{equation}\label{eq:conditional}
\qq^{k,t+1}|_{\mathcal F_t}\ed \E[\qq^{k,t+1}|\mathcal F_t] +  \cP_t^{\perp}(\tbA^{(k)}) \{\f_t\}\, .
\end{equation}
where $\tbA^{(k)}$ is a symmetric tensor distributed identically to $\bA^{(k)}$ and independent of everything else, and
$\cP_{t}^{\perp}$ is the projection onto the subspace $\cS_{k,t}$ defined in Section \ref{sec:LAMP}.
Further 
\begin{equation}
\label{eq:CondExp}
    \E[\qq^{k,t+1}|\mathcal F_t]
    = 
    \sum_{s\in\sS}
    \sum_{0\leq t_1\leq t}  
    h_{t,t_1-1,k,s}
    \qq^{k,t_1}_s\, .
\end{equation}
Moreover, the vectors $(\qq^{k,t+1})_{2\leq k\leq D}$ are conditionally independent given $\mathcal F_t$. 
\item Approximate isometry: we have
\begin{align}
\label{eq:c1}
    R_s(\qq^{k,t_1+1},\qq^{k,t_2+1})
    &\simeq 
    \xi^{k,s}\lt(\vR( \f_{t_1},\f_{t_2})\rt)\, ,
    \\
\label{eq:c3}
    R_s(\bv^{t_1+1},\bv^{t_2+1})
    &\simeq 
    \xi^{s}\big(\vR( \f_{t_1},\f_{t_2})\big). 
\end{align}
Moreover, both sides converge in probability to constants as $N\to\infty$, and for $k_1\neq k_2$ and any $(t_1,t_2)$ and $s\in\sS$,    
\begin{equation}
\label{eq:c2}
    R_s(\qq^{k_1,t_1},\qq^{k_2,t_2})\simeq 0.
\end{equation}
\item  State evolution: for each $s\in\sS$ and any pseudo-Lipschitz function 
$\psi:\bbR^{D \times 2(t+1)}\to \bbR$, we have
\begin{align}
\label{eq:ConvergenceLAMP}
    \plim_{N\to\infty}
    \frac{1}{N_s}
    \sum_{i\in\cI_s}
    \psi\big((q_i^{k,t'})_{ k\le D,t'\le t}; (e^t_i)_{t'\leq t}\big) 
    = 
    \E\big\{\psi\big((U^{k,t'}_s)_{2\leq k\leq D,t'\leq t}; 
    (E^{t'}_s)_{t'\leq t}
    \big)\big\}\, .
\end{align}
where $(U^{k,t}_s)_{k\le D,1\le t\le T}$ is the centered Gaussian process defined in the statement of  Theorem \ref{thm:mixedAMP}.
\end{enumerate}
\end{theorem}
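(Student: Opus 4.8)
The plan is to prove Theorem~\ref{thm:SELAMP} by induction on $t$, using the Gaussian conditioning technique of Bolthausen as adapted to general iterations in \cite{berthier2019state} and to mixed tensors in \cite{ams20}; the only extra bookkeeping is the presence of several species $s\in\sS$. The induction hypothesis at time $t$ packages conclusions (a), (b), (c) for all times up to $t$, and the inductive step establishes them at time $t+1$. The base case is immediate: $\qq^{k,1}=\cP_0^\perp(\bA^{(k)})\{\f_0\}$ is, up to an $O(1)$-dimensional projection, a centered Gaussian vector whose law is computed by Lemma~\ref{lem:4}(1), the initial $\bbW_2$-convergence is assumed, and orthogonality across $k$ follows from independence of the $\bA^{(k)}$.

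The heart of the inductive step is the conditional-law statement (a). Conditioning on $\cF_t$ amounts to conditioning each Gaussian tensor $\bA^{(k)}$ on the finitely many linear observations $\bA^{(k)}\{\f_{t'}\}=\by_{k,t'+1}$, $t'<t$; Lemma~\ref{lem:symregression} gives the exact formula for the conditional mean $\E[\bA^{(k)}\mid\cF_t]$ in terms of the matrix $\hZZ_{k,t}$ solving the linear system \eqref{eq:ZZ-eq}, with the well-conditioning hypotheses (i), (ii) ensuring this system is uniquely and stably solvable. By standard Gaussian conditioning, $\bA^{(k)}$ given $\cF_t$ is distributed as $\E[\bA^{(k)}\mid\cF_t]$ plus $\cP_t^\perp(\tbA^{(k)})$ for an independent copy $\tbA^{(k)}$. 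Substituting this decomposition into the LAMP map \eqref{eq:LAMP1} and invoking the definition \eqref{eq:LAMP2} of the memory coefficients $h_{t,t_1,k,s}$, one verifies the algebraic identity that the $\cF_t$-measurable part of $\qq^{k,t+1}$ equals exactly the memory term $\sum_{s,t_1} h_{t,t_1-1,k,s}\,\qq^{k,t_1}_s$ of \eqref{eq:CondExp}---this cancellation is precisely why the coefficients $h$ are defined as they are---leaving the residual $\cP_t^\perp(\tbA^{(k)})\{\f_t\}$, which is a fresh Gaussian conditionally on $\cF_t$. Since $\cP_t^\perp$ removes only an $O(1)$-dimensional subspace, Lemma~\ref{lem:4}(5) shows the residual is unchanged up to vanishing relative error, so Lemma~\ref{lem:4}(1),(4) pin down its conditional covariance, and conditional independence across $k$ is inherited from independence of the $\bA^{(k)}$.

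Granting (a) at time $t+1$, parts (b) and (c) follow by now-routine arguments. For (b), the overlaps $R_s(\qq^{k,t_1+1},\qq^{k,t_2+1})$ are read off by applying Lemma~\ref{lem:4}(3) to $\bA^{(k)}\{\f_{t_1}\}$ and $\bA^{(k)}\{\f_{t_2}\}$ (discarding projections via Lemma~\ref{lem:4}(5)), giving $\xi^{k,s}(\vR(\f_{t_1},\f_{t_2}))$; summing over $k$ yields \eqref{eq:c3}, and \eqref{eq:c2} holds because distinct tensors $\bA^{(k_1)},\bA^{(k_2)}$ are independent and centered. For (c), statement (a) exhibits the new coordinates of $\qq^{k,t+1}$ as, conditionally on $\cF_t$, an asymptotically i.i.d. Gaussian vector with covariance governed by the Gram matrices $\bG_{\xi^{k,s},t}$; applying the inductive state-evolution hypothesis to the pseudo-Lipschitz statistics built from $\f_{t'}$ shows $\vR(\f_{t_1},\f_{t_2})_s$ converges to the deterministic limit $\Sigma^{k,t_1,t_2}_s$, hence the covariance of the fresh Gaussians matches $\xi^{k,s}(\Sigma^{k,\cdot,\cdot})$ from the recursion \eqref{eq:recursive-covariance} defining $U^{k,t+1}_s$. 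A standard conditioning-plus-continuity argument (propagating $\bbW_2$-convergence through the Lipschitz $f_{t,s}$ and the pseudo-Lipschitz test function $\psi$) then upgrades convergence of empirical averages to \eqref{eq:ConvergenceLAMP}; the external randomness vectors $\be^t$ pass through unchanged since they are independent of the disorder.

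The step I expect to be the main obstacle is the algebraic identity showing that the LAMP memory term $\sum_{s,t_1} h_{t,t_1-1,k,s}\,\qq^{k,t_1}_s$ exactly reproduces the $\cF_t$-measurable part of $\cP_t^\perp(\bA^{(k)})\{\f_t\}$: this is where the definitions of $\cT_{k,t}$, $\hZZ_{k,t}$ and $h_{t,t_1,k,s}$ must be matched up term by term, and where the multi-species structure---the dependence of $\bG_{\xi^{k,s},t}$ and of $\partial_{s'}\xi^{k,s}$ on $s$---genuinely thickens the bookkeeping relative to the single-species computation of \cite{ams20}. A secondary care point is that the subspaces $\cS_{k,t}$ onto which $\cP_t^\perp$ projects are themselves random (they depend on $\f_0,\dots,\f_{t-1}$), so the conditioning must be carried out along the filtration $(\cF_t)$ and the leave-one-out corrections controlled uniformly; this is standard but must be handled with the well-conditioning assumptions in hand.
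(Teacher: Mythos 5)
Your overall architecture matches the paper's: induction on $t$, Gaussian conditioning of each $\bA^{(k)}$ on the linear observations via Lemma~\ref{lem:symregression}, part (c) by matching the regression coefficients $\alpha_{t_1,s}$ against the idealized coefficients $\wt\alpha_{t_1,s}$ of the limiting process and then applying Gaussian concentration to Lipschitz $\psi$ (with the external randomness handled by independence, as in Proposition~\ref{prop:W2-converge-augment}). However, you have misplaced the difficulty, and in doing so left a genuine gap. Part (a) requires no ``algebraic identity'': for the \emph{LAMP} iteration the fluctuation term is \emph{defined} to be $\cP_t^{\perp}(\bA^{(k)})\{\f_t\}$, which is independent of $\cF_t$ and centered, so the $\cF_t$-measurable part is the memory term by construction and \eqref{eq:CondExp} is definitional. (The cancellation you describe — matching the conditional mean of $\bA^{(k)}\{\f_t\}$ against Onsager-type correction terms — is the content of the separate AMP--LAMP equivalence, Lemma~\ref{lem:ampequalslamp}, not of this theorem.)

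The real obstacle is in part (b), and your proposed shortcut there fails. You discard the projection by asserting that ``$\cP_t^\perp$ removes only an $O(1)$-dimensional subspace'' and invoking Lemma~\ref{lem:4}(5). This is false: $\cS_{k,t}$ is cut out by the constraints $\bT\{\f_{t_1}\}=\vzero$ for $t_1<t$, each of which is $N$ scalar equations, so the complement of $\cS_{k,t}$ has dimension of order $Nt$, and moreover this subspace is \emph{random} (it depends on $\f_0,\dots,\f_{t-1}$), whereas Lemma~\ref{lem:4}(5) concerns a deterministic $O(1)$-dimensional projection of a vector in $\bbR^N$. Consequently one cannot ``read off'' \eqref{eq:c1} by applying Lemma~\ref{lem:4}(3) to the unprojected $\bA^{(k)}\{\f_{t_1}\}$. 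What is actually needed is the decomposition of $\f_t^{\otimes k-1}$ into $(\f_t^{\otimes k-1})_{\parallel}+(\f_t^{\otimes k-1})_{\perp}$ together with the statement $\cP_t^{\perp}(\tbA^{(k)})\{(\f_t^{\otimes k-1})_{\perp}\}\simeq \tbA^{(k)}\{(\f_t^{\otimes k-1})_{\perp}\}$ (Lemma~\ref{lem:LemmaPerp} in the paper). Its proof is not soft: one writes $\cP_t^{\perp}(\tbA^{(k)})=\tbA^{(k)}-\bQ$ via Lagrange multipliers $\btheta_{t_1}$, splits $\btheta_{t_1}=\btheta^0_{t_1}+\btheta^{\parallel}_{t_1}$, and uses the well-conditioning of $\cL_{k,t}=\bfone+\cT_{k,t}$ (hypothesis (ii)) plus the approximate orthogonality $R_{s'}(\f_{t_4},\tbA\{\f_{t_2}\})\simeq 0$ to show $\btheta^{\parallel}_{t_1}\simeq 0$. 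One must then separately verify that the memory term's self-overlap reproduces $\xi^{k,s}$ of the parallel component (matching \eqref{eq:LAMP2} against the linear-regression coefficients) so that the parallel and perpendicular contributions sum to $\xi^{k,s}(\vR(\f_t,\f_t))$. Without this, both \eqref{eq:c1} at $t_1=t_2=t$ and the variance identification needed for part (c) are unproven.
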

% %
% %
In the next subsection, we will prove these statements by induction on $t$. The crucial point we exploit is the representation $(a)$. We emphasize that the iteration number $t$ is bounded as $N\to\infty$; therefore all numerical quantities not depending on $N$ (but possibly on $t$) will be treated as constants.

\subsection{Proof of Theorem~\ref{thm:SELAMP}}

The proof will be by induction over $t$. The base case is clear, (e.g. see Proposition~\ref{prop:W2-converge-basic}) and we focus on the inductive step. We assume the statements above for $t-1$ and prove them for $t$. 

\subsubsection{Proof of $(a)$}

Note that $\cP_t^{\perp}(\bA^{(k)})$ is by construction independent of $\cF_t$, and therefore we can replace $\bA^{(k)}$
by a fresh independent matrix in \eqref{eq:LAMP1}, whence \eqref{eq:conditional} follows. The equality \eqref{eq:CondExp} holds by definition of the iteration.

\subsubsection{Proof of $(b)$: Approximate isometry}

We will repeatedly apply Lemma~\ref{lem:4}. We start with  \eqref{eq:c1}. As we are inducting on $t$, we may limit ourselves to considering overlaps
$\vR( \qq^{k,t+1},\qq^{k,t_1+1})$, for $t_1\le t$. 

Define the tensor $\Gamma^{(k),\nabla}\in (\bbR^{\sS}_{\geq 0})^{\otimes (k-1)}$ by
\begin{equation}
\label{eq:Gamma-nabla-def}
    \Gamma^{(k),\nabla}_{s_1,\dots,s_{k-1}}
    =
    \sqrt{
    k
    \sum_{s\in\sS}
    \lambda_s
    \big(\Gamma^{(k)}_{s,s_1,\dots,s_{k-1}}\big)^2
    }.
\end{equation}
We choose 
\[
    (\f_{t}^{\otimes k-1})_{\parallel}
    \in
    {\rm span}\lt(\f_{t_1}^{\otimes k-1}\rt)_{t_1< t}
\]
such that
\[
    \Gamma^{(k),\nabla}\diamond \big(\f_{t}^{\otimes k-1}\big)_{\parallel}
\]
is the orthogonal projection of $\Gamma^{(k),\nabla}\diamond \f_{t}^{\otimes k-1}$ onto 
\[
    {\rm span}\lt(\Gamma^{(k),\nabla}\diamond\f_{t_1}^{\otimes k-1}\rt)_{t_1< t}
\]
and also set
\[
    (\f_{t}^{\otimes k-1})_{\perp}=\f_{t}^{\otimes k-1}-(\f_{t}^{\otimes k-1})_{\parallel}.
\]

We will use (and soon after, prove) the following lemma.
\begin{lemma}
\label{lem:LemmaPerp}
% For all $\cF_t$-measurable vectors $\bv \in \bbR^N$,
For all $t_1\leq t_1$, we have
% \begin{equation}
% \label{eq:LemmaPerp1}
%     % R\big( \bv,\cP_t^{\perp} (\bA^{(k)})\{\f_t\}\big) \simeq 0.
%     R\big( \f_{t_1},\cP_t^{\perp} (\bA^{(k)})\{\f_t\}\big) \simeq 0.
% \end{equation}
% Moreover,
\begin{equation}
\label{eq:LemmaPerp2}
    \cP_t^{\perp} (\tbA^{(k)})\{(\f_t^{\otimes k-1})_{\perp}\} \simeq
    \tbA^{(k)}\{(\f_t^{\otimes k-1})_{\perp}\}\, .
\end{equation}
\end{lemma}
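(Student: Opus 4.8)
\textbf{Proof plan for Lemma~\ref{lem:LemmaPerp}.}
The statement asserts that projecting the fresh Gaussian tensor $\tbA^{(k)}$ onto the symmetric subspace $\cS_{k,t}$ (i.e.\ killing the ``parallel'' directions spanned by $\f_{t_1}^{\otimes(k-1)}$, $t_1<t$) barely changes how it acts on the \emph{perpendicular} slice $(\f_t^{\otimes k-1})_\perp$. I would prove this by writing $\tbA^{(k)} = \cP_t^{\perp}(\tbA^{(k)}) + \big(\tbA^{(k)} - \cP_t^{\perp}(\tbA^{(k)})\big)$ and showing the second piece, when fed the argument $(\f_t^{\otimes k-1})_\perp$, has vanishingly small $\ell_2$-norm relative to the relevant scale. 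Concretely, one estimates
\[
    \tnorm{\big(\tbA^{(k)} - \cP_t^{\perp}(\tbA^{(k)})\big)\{(\f_t^{\otimes k-1})_\perp\}}_2 \simeq 0.
\]

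The key steps, in order. First, identify $\tbA^{(k)} - \cP_t^{\perp}(\tbA^{(k)})$ explicitly: by definition of the projection in the weighted Frobenius inner product \eqref{eq:Gammak-inner-product}, this difference is a symmetric tensor lying in the (finite-dimensional, dimension $O(t)$) span of the ``Lagrange-multiplier'' tensors built from $\f_{t_1}$, $t_1<t$ --- of the same shape as the $\hbA^{(k)}_t$ appearing in the proof of Lemma~\ref{lem:symregression}. Second, use that $(\f_t^{\otimes k-1})_\perp$ is, by construction, $\Gamma^{(k),\nabla}$-orthogonal to each $\f_{t_1}^{\otimes k-1}$; feeding such a vector into a tensor of the form $\Gamma^{(k)}\diamond \big(\sum_{j}\underbrace{\f_{t_1}\otimes\cdots}_{j-1}\otimes\, \bm\otimes \underbrace{\cdots\otimes\f_{t_1}}_{k-j}\big)$ produces a vector whose norm is controlled by inner products $\la \f_{t_1}, (\f_t^{\otimes k-1})_\perp\text{-contractions}\ra$, all of which are either exactly zero or $o(1)$ by the orthogonality built into the definition of $(\cdot)_\perp$ together with Assumption~\ref{as:well-conditioned} (well-conditioning of $\bG_{\xi^{k,s},t}$, so the Lagrange multipliers $\bm^{t_1}$ have $O(1)$ norm). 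Third, combine with part (5) of Lemma~\ref{lem:4}: since $\cP_t^{\perp}$ differs from the identity only by a projection onto an $O(t)=O(1)$-dimensional deterministic-given-$\cF_t$ subspace, and $\tbA^{(k)}\{(\f_t^{\otimes k-1})_\perp\}$ is a nondegenerate Gaussian vector of norm $\Theta(\sqrt N)$ by Lemma~\ref{lem:4}(4), the removed component is lower-order; hence the two sides of \eqref{eq:LemmaPerp2} agree up to $o(\sqrt N)$ in $\ell_2$, which is $\simeq$ in the normalized sense used throughout.

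The main obstacle I anticipate is bookkeeping the multi-species $\diamond$-structure: in the single-species case (\cite[Appendix A]{ams20}) this projection lemma is clean because $\Gamma^{(k)}$ is trivial, whereas here one must carefully track that the correct orthogonality notion is the $\Gamma^{(k),\nabla}$-weighted one \eqref{eq:Gamma-nabla-def} --- precisely so that the contraction of $\Gamma^{(k)}\diamond(\text{rank-one symmetric pieces})$ against $(\f_t^{\otimes k-1})_\perp$ vanishes. Getting the weights to match requires expanding $\bA^{(k)}\{\bu\}$ coordinate-wise and recognizing the factor $k\sum_s \lambda_s (\Gamma^{(k)}_{s,s_1,\dots,s_{k-1}})^2$ that appears when one computes the variance of a single output coordinate (cf.\ Lemma~\ref{lem:4}(4)). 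Once the weighting is pinned down, the rest is a routine ``projection onto a bounded-dimensional subspace is negligible'' argument as in \cite{berthier2019state,ams20}.
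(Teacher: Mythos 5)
Your skeleton matches the paper's: write $\cP_t^{\perp}(\tbA^{(k)})=\tbA^{(k)}-\bQ$ with $\bQ$ a Lagrange-multiplier tensor supported on the span of symmetrized products $\f_{t_1}\otimes\cdots\otimes\btheta_{t_1}\otimes\cdots\otimes\f_{t_1}$, contract against $(\f_t^{\otimes k-1})_{\perp}$, and show the result is negligible; and you correctly identify the $\Gamma^{(k),\nabla}$-weighted orthogonality as the right notion. However, there is a genuine gap at the decisive step. When you expand $\bQ\{(\f_t^{\otimes k-1})_{\perp}\}$, only the terms in which \emph{all} $k-1$ contracted slots carry $\f_{t_1}$ are killed by the orthogonality built into the definition of $(\f_t^{\otimes k-1})_{\perp}$. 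The remaining terms are of the form
\[
    \lt\langle
    \Gamma^{(k),\nabla}\diamond\big(\btheta_{t_1}\otimes\f_{t_1}^{\otimes k-2}\big),\,
    \Gamma^{(k),\nabla}\diamond\big(\f_t^{\otimes k-1}\big)_{\perp}
    \rt\rangle_N\,\f_{t_1},
\]
and these are \emph{not} controlled by that orthogonality, nor by the mere fact that $\|\btheta_{t_1}\|_N=O(1)$: such an inner product can be $\Theta(1)$ for a generic bounded $\btheta_{t_1}$. What actually kills them is the approximate orthogonality $R_s(\btheta_{t_1},\f_{t_2})\simeq 0$, and establishing this is the bulk of the paper's proof: one must solve the Lagrange system explicitly, show it reads $\cL_{k,t}^{\sT}(\bTheta)=\bTheta^0$ with $\bTheta^0$ built from the fresh-Gaussian contractions $\tbA^{(k)}\{\f_{t_2}\}$, verify $\cT_{k,t}^{\sT}(\bTheta^0)\simeq 0$, and invoke well-conditioning of $\cL_{k,t}$ to conclude $\btheta_{t_1}\simeq\btheta_{t_1}^0$; only then does Lemma~\ref{lem:4} (giving $R_{s'}(\f_{t_4},\tbA^{(k)}\{\f_{t_2}\})\simeq 0$) finish the job. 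Your proposal does not contain this step.

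Relatedly, your appeal to Lemma~\ref{lem:4}(5) ("projection onto an $O(1)$-dimensional subspace is negligible") does not apply as stated: $\cP_t^{\perp}$ is a projection in the space of symmetric \emph{tensors} with respect to the weighted Frobenius inner product, not a projection of the output vector in $\bbR^N$, and removing a bounded-codimension tensor subspace can change the output vector by $\Theta(\sqrt N)$ for a badly chosen argument (e.g.\ if $(\f_t^{\otimes k-1})_{\perp}$ were replaced by $\f_{t_1}^{\otimes k-1}$ the left side of \eqref{eq:LemmaPerp2} would vanish identically). The conclusion genuinely depends on the structure of the multipliers, not on a dimension count.
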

For $t_1\le t-1$, using Lemma~\ref{lem:4}, point 2 implies
\begin{align*}
    \vR(\qq^{k,t+1},\qq^{k,t_1+1}) 
    \simeq 
    \vR\big(\E[\qq^{k,t+1}|\cF_t],\, \qq^{k,t_1+1}\big)
\end{align*}
We next use the formula in $(a)$ for $\E[\qq^{k,t+1}|\cF_t]$ together with the expression in \eqref{eq:LAMP1}. For each $s\in\sS$:
\begin{align}
\nonumber
    R\big(\mathbb E[\qq^{k,t+1}|\mathcal F_t],\qq^{k,t_1+1}\big)_s
    &\simeq
    R\Bigg(
    \sum_{0\leq t_2,t_3\leq t-1}
    \qq^{k,t_3+1}(\bG_{\xi^{k,s},t-1}^{-1})_{t_3,t_2}
    \,
    \xi^{k,s}\big(\f_{t_2},\f_{t}\big)
    ,\, \qq^{k,t_1+1}
    \Bigg)_s
    \\
\nonumber
    &= 
     \sum_{0\leq t_2,t_3\leq t-1}
     R\big(\qq^{k,t_3+1},\qq^{k,t_1+1}\big)_s
     \,
     (\bG_{\xi^{k,s},t-1}^{-1})_{t_3,t_2} 
     \,
     \xi^{k,s}\big(\f_{t_2},\f_{t}\big)
    \\
\label{eq:3rd-ineq}
    &\simeq
    \sum_{0\leq t_2,t_3\leq t-1} 
    (\bG_{\xi^{k,s},t-1})_{t_3,t_1}
    \,
    (\bG_{\xi^{k,s},t-1}^{-1})_{t_3,t_2} 
    \,
    \xi^{k,s}\big(\f_{t_2},\f_{t}\big)
    \\
\nonumber
    &=
    \big(
    \bG_{\xi^{k,s},t-1}
    \times 
    \bG_{\xi^{k,s},t-1}^{-1}
    \times
    \bG_{\xi^{k,s},t-1}
    \big)_{t_1,t}
    \\
\nonumber
    &=
    (\bG_{\xi^{k,s},t-1})_{t_1,t}.
\end{align}
%
% \begin{align}
% %
% \<\mathbb E[\qq^{k,t+1}|\mathcal F_t],\qq^{k,u+1}\rangle_N &\simeq\lt\langle \sum_{0\leq r,s\leq t-1} \qq^{k,t'+1}(\bG_{k-1,t-1}^{-1})_{s,r} \langle \f_r,\f_{t}\rangle_N^{k-1},\qq^{k,u+1}\rt\rangle_N\\
% &= \sum_{0\leq r,s\leq t-1} \langle \qq^{k,t'+1},\qq^{k,u+1}\rangle_N (\bG_{k-1,t-1}^{-1})_{s,r} \langle \f_r,\f_{t}\rangle_N^{k-1}\\
% &\simeqkc_k^2\sum_{0\leq r,s\leq t-1} (\bG_{k-1,t-1})_{s,u} (\bG_{k-1,t-1}^{-1})_{s,r} \langle \f_r,\f_{t}\rangle_N^{k-1}\\
% &=kc_k^2\langle \f_u,\f_t\rangle_N^{k-1}.
% %
% \end{align}
Here \eqref{eq:3rd-ineq} comes from the induction hypothesis \eqref{eq:c1} (and the symmetry of the matrix $\bG_{\xi^{k,s},t-1}$ is used to obtain the next line). We next prove that \eqref{eq:c1} holds for $t_1=t$. We have by definition of the projections that
\begin{align*}
\cP_t^{\perp} (\tbA^{(k)})\{\f_t\}=\cP_t^{\perp} (\tbA^{(k)})\{(\f_t^{\otimes k-1})_{\perp}\} \, ,
\end{align*}
where the right-hand side is defined according to \eqref{eq:TensorTensor}.
Using \eqref{eq:LemmaPerp2} from Lemma~\ref{lem:LemmaPerp} as well as point~4 of Lemma~\ref{lem:4}, we have
\begin{align}
\label{eq:PAnorm}
    R\lt(
    \cP_t^{\perp} (\tbA^{(k)})\{\f_t\}
    \,,
    \cP_t^{\perp} (\tbA^{(k)})\{\f_t\}
    \rt)_s
    \simeq 
    \xi^{(k,s)}\lt(
    R\big(
    (\f_{t}^{\otimes k-1})_{\perp}
    \,,
    (\f_{t}^{\otimes k-1})_{\perp}
    \big)
    \rt).
\end{align}
Next, using \eqref{eq:LemmaPerp2} and Lemma~\ref{lem:4} (point 2), we obtain that for all $s\in\sS$
\begin{equation}
\label{eq:ApproxOrth}
    R\big(\cP_t^{\perp} (\tbA^{(k)})\{\f_t\},\E[\qq^{k,t+1}|\cF_t]\big)_s \simeq 0\, .
\end{equation}
Moreover we recall that by the expression for $\E[\qq^{k,t+1}|\cF_t]$ from part $(a)$,
\begin{align}
    R\lt(
    \E[\qq^{k,t+1}|\mathcal F_t]
    \,,
    \E[\qq^{k,t+1}|\mathcal F_t]
    \rt)
    \simeq
    \xi^{k,s}
    \lt(
    \vR(
    (\f_t^{\otimes k-1})_{\parallel}
    \,,
    (\f_t^{\otimes k-1})_{\parallel}
    )
    \rt).
\end{align}
The formula for linear regression implies
\begin{align}
\label{eq:lin-reg-1}
    (\f_{t}^{\otimes k-1})_{\parallel}
    & =
    \sum_{0\leq t_1\leq t-1} \alpha_{t_1,t} \Gamma^{(k,s)}\diamond \f_{t_1}^{\otimes k-1},
    \\
    \alpha_{t_1,t}
    &=
    \sum_{0\leq t_2\le t-1} 
    (\bG_{\xi^{k,s},t-1}^{-1})_{t_1,t_2}
    \< 
    \Gamma^{(k,s)}\diamond \f_{t_2}^{\otimes k-1},
    \Gamma^{(k,s)}\diamond \f_{t}^{\otimes k-1}
    \>_N
    \\
    &=
    \sum_{0\leq t_2\le t-1} 
    (\bG_{\xi^{k,s},t-1}^{-1})_{t_1,t_2}
   (\bG_{\xi^{k,s},t})_{t_2,t}
    \, .
 \end{align}

By part $(b)$ of the inductive step, for $1\leq t_1,t_2\leq t-1$ we have
\[
    \xi^{k,s}\big(\vR( \f_{t_2},\f_{t_1})\big)
    \simeq 
    R_s( \qq_{k,t_2+1},\qq_{k,t_1+1})
    \,.
\]
In particular the formulas \eqref{eq:LAMP2} and \eqref{eq:lin-reg-1} have asymptotically the same coefficients, and the overlap structure between the summands is identical. It follows that
\begin{align}
\label{eq:CEnorm}
    R\lt(
    \E[\qq^{k,t+1}|\mathcal F_t],
    \,,
    \E[\qq^{k,t+1}|\mathcal F_t]
    \rt)
    \simeq
    R\lt(
    (\f_{t}^{\otimes k-1})_{\parallel}
    \,,
    (\f_{t}^{\otimes k-1})_{\parallel}
    \rt).
\end{align}
Using together Eqs.~\eqref{eq:PAnorm}, \eqref{eq:ApproxOrth}, and \eqref{eq:CEnorm}, we get
\begin{align*}
    R\lt(\qq^{k,t+1},\qq^{k,t+1}\rt)
    &\simeq 
    R\lt(\E[\qq^{k,t+1}|\mathcal F_t]
    \,,
    \E[\qq^{k,t+1}|\mathcal F_t]
    \rt)
    + 
    R\lt(
    (\f_t^{\otimes k-1})_{\perp}
    \,,
    (\f_t^{\otimes k-1})_{\perp}
    \rt)
    \\
    &\simeq
    R\lt(
    (\f_t^{\otimes k-1})_{\perp}
    \,,
    (\f_t^{\otimes k-1})_{\perp}
    \rt)
    \\
    &=
    \xi^{k,s}\lt(
    \vR( \f_t,\f_t)
    \rt)
    \,.
 \end{align*}
This establishes \eqref{eq:c1}.

Next consider \eqref{eq:c2}, i.e., approximate orthogonality of $\qq^{k,r}$ and $\qq^{p',r}$ for $k\neq p'.$ This follows easily from the representation
in point $(a)$ which, together with Lemma~\ref{lem:4}, inductively implies that the iterates $\qq^{s,k}$ for different $k$ are approximately orthogonal.
Finally, \eqref{eq:c3}  follows directly from \eqref{eq:c1} and \eqref{eq:c2}.
We now prove Lemma~\ref{lem:LemmaPerp}.

\begin{proof}[Proof of Lemma~\ref{lem:LemmaPerp}]
For convenience we write $\tbA=\tbA^{(k)}$. By Lagrange multipliers, there exist vectors
    $(\btheta_{t_1})_{t_1 \le t-1}$ in $\bbR^N$ such that $\cP_t^{\perp} (\tbA) = \tbA - \bQ$, where 
\begin{align*}
  \bQ = 
  \big(\Gamma^{(k)}\big)^{\odot 2}
  \diamond
  \frac{(k-1)!}{N^{k-1}}\sum_{t_1=0}^{t-1} \sum_{j=1}^k  \underbrace{\f_{t_1}\otimes \cdots \otimes \f_{t_1}}_{\mbox{$j-1$ times}}\otimes \btheta_{t_1} \otimes \underbrace{\f_{t_1}\otimes \cdots \otimes \f_{t_1}}_{\mbox{$k-j$ times}}.
\end{align*}
The vectors $(\btheta_{t_1})_{t_1 \le t-1}$ are determined by the equations
% $\cP_t^{\perp} (\tbA) \{\f_{t_1}\} = 0$
$\bQ \{\f_{t_1}\}=\tbA \{\f_{t_1}\}$
for all $t_1\le t-1$.
This expands (for each $t_1\leq t-1$) to
\begin{align*}
    \sum_{t_2\leq t-1} 
    (\bG_{\xi^{k,s},t-1})_{t_1,t_2}
    \diamond 
    \btheta_{t_2}
    +
    %   (k-1)
    \sum_{t_2\leq t-1}
    % (\bGG_{\xi^{k,s}})_{t_1,t_2}
    \sum_{s'\in\sS}
    \lt(\partial_{s'}
    \xi^{k,s(i)}(\vR(\f_{t_1},\f_{t_2}))
    R_{s'}(\f_{t_1},\btheta_{t_2})
    \rt)
    % \diamond
    % \<\f_{t_1},\btheta_{t_2}\>_N 
    \f_{t_2}
    = 
    \tbA\{\f_{t_1}\}\, .
\end{align*}
Recall that we assume each $\bG_{\xi^{k,s},t-1}$ is well-conditioned with high probability. Thus we can multiply the system of $t$ equations above by $\bG_{\xi^{k,s},t-1}^{-1}$ in the coordinates $\cI_s$ for each $s\in\sS$. For each $t_3\leq t-1$, we obtain:
\begin{equation}
\begin{aligned}
\label{eq:LambdaEq}
  &\btheta_{t_3}
  +
  \sum_{t_1,t_2<t}
  \lt(
  (\bG_{\xi^{k,s},t-1}^{-1})_{t_1,t_3} 
  % (\bGG_{\xi^{k,s},t-1})_{t_1,t_2}
  \sum_{s'\in\sS}
    \lt(\partial_{s'}
    \xi^{k,s(i)}(\vR(\f_{t_1},\f_{t_2}))
    R_{s'}(\f_{t_1},\btheta_{t_2})
    \rt)
  \rt)
  % \diamond
  % \<\f_{t_1},\btheta_{t_2}\>_N 
  \f_{t_2}
  \\
  &=
  \sum_{t_1<t}
  (\bG_{\xi^{k,s},t-1}^{-1})_{t_1,t_3}
  \diamond
  \tbA\{\f_{t_1}\}\, . 
\end{aligned}
\end{equation}
Switching $t_3$ to $t_1$, we find
\begin{align}
\nonumber
  \btheta_{t_1} &=   \btheta^0_{t_1}+ \btheta^{\parallel}_{t_1}\, ,
  \\
\label{eq:theta0}
  \btheta^0_{t_1}
  &\equiv
  \sum_{t_2<t}
  (\bG_{\xi^{k,s},t-1}^{-1})_{t_1,t_2}\diamond \tbA\{\f_{t_2}\} \, ,
  \\
\nonumber
  \btheta^{\parallel}_{t_1}
  &\in 
  \spn\lt((\f_{t_2,s})_{t_2<t,s\in\sS}\rt).
\end{align}
We claim that $\|\btheta^{\parallel}_{t_1}\|_N\simeq 0$, i.e.,
$\btheta_{t_1}\simeq   \btheta^0_{t_1}$. Indeed, let $\bTheta\in\bbR^{N\times t}$ be the matrix with columns
$(\btheta_{t_2})_{t_2<t}$, and $\bTheta^0$ the matrix with columns $(\btheta^0_{t_2})_{t_2<t}$.
Then \eqref{eq:LambdaEq} can be written as
\begin{align*}
  \cL_{k,t}^{\sT}(\bTheta) =\bTheta^0\, .
\end{align*}
Here we recall $\cL_{k,t}=\bfone+\cT_{k,t}$ and $\cT_{k,t}\in\bbR^{Nt\times Nt}$ is defined in \eqref{eq:Tdef}.
Substituting the decomposition 
$\bTheta = \bTheta^0+\bTheta^{\parallel}$ in the above, we obtain
\begin{align*}
  \cL_{k,t}^{\sT}(\bTheta^{\parallel}) =-\cT^{\sT}_{k,t}(\bTheta^0)\, .
\end{align*}
Recall that $\cL_{k,t}$ is well-conditioned by Assumption~\ref{as:well-conditioned}. Therefore it remains to prove 
\begin{equation}
\label{eq:key-lamp-computation}
    \cT^{\sT}_{k,t}(\bTheta^0)\stackrel{?}{\simeq} 0.
\end{equation}
Let $\bc_0,\cdots,\bc_{t-1} \in \bbR^N$ be the columns of $\cT^{\sT}_{k,t}(\bTheta^0)$. We first note that for all $t_1\leq t-1$ and $s\in\sS$, 
\[
    \bc_{t_1,s} \in \spn\big((\f_{t_2,s})_{t_2<t}\big).
\]
Moreover the Gram matrix 
\[
    \bG_{1,t-1,s}=\lt(R_s(\f_{t_1},\f_{t_2})\rt)_{t_1,t_2<t}
\]
is well-conditioned for each $s\in\sS$. Therefore it is sufficient to check that $R_s(\f_{t_1},\bc_{t_4})\simeq 0$ for each $t_1,t_4<t$ and $s\in\sS$. Plugging in the definition \eqref{eq:Tdef}, it remains to check
that for $0\leq t_1,t_4\leq t-1$,
\begin{align*}
    \sum_{t_2,t_3<t}
    \sum_{s''\in\sS}
    \lambda_{s''}
    \vR_s\lt(
    \f_{t_1}
    ,
    \,
    (\bG_{\xi^{k,s''},t-1}^{-1})_{t_2,t_3} 
    % (\bGG_{\xi^{k,s},t-1})_{t_4,t_3}
    \sum_{s'}
    \partial_{s'}
    \xi^{k,s''}(\vR(\f_{t_4},\f_{t_3}))
    R_{s'}(\f_{t_4},\btheta^0_{t_2})
    % \diamond
    % \btheta^0_{t_2}
    \rt)
    R_s(\f_{t_1},\f_{t_3})
    \stackrel{?}{\simeq} 
    0\, 
    .
\end{align*}
Finally, this last claim follows by substituting the definition \eqref{eq:theta0} of $\btheta^0_{t_2}$, and using the fact that  
\[
    R_{s'}(\f_{t_4},\tbA\{\f_{t_2}\})\simeq 0,\quad
    \forall~ t_4,t_2\le t,\,s'\in\sS
\]
which follows from Lemma \ref{lem:4}. Thus \eqref{eq:key-lamp-computation} is established.

We are now ready to prove Lemma~\ref{lem:LemmaPerp}. 
First note that 
\begin{equation}
\label{eq:god}
     \tbA\{(\f_t^{\otimes k-1})_{\perp}\} 
    -
    \cP_t^{\perp} 
    (\tbA)
    \{(\f_t^{\otimes k-1})_{\perp}\} 
    = 
    \bQ\{(\f_t^{\otimes k-1})_{\perp}\}
\end{equation}
decomposes into two types of terms based on the definition of $\bQ$ above. Recalling \eqref{eq:Gamma-nabla-def}, the first involves
\[
    \lt\langle 
    \Gamma^{(k),\nabla}
    \diamond 
    (\f_{t_1}^{\otimes k-1})_{\perp}
    ,
    \Gamma^{(k),\nabla}
    \diamond 
    (\f_t^{\otimes k-1})_{\perp}
    \rt\rangle_N
    \btheta_{t_1}
\]
for $t_1\leq t-1$, which vanishes by the definition of $(\f_t^{\otimes k-1})_{\perp}$. The other terms take the form
\[
    \lt\langle 
    \Gamma^{(k),\nabla}
    \diamond 
    (\btheta_{t_1}\otimes \f_{t_1}^{\otimes k-2})
    ,\,
    \Gamma^{(k),\nabla}
    \diamond 
    (\f_t^{\otimes k-1})_{\perp}
    \rt\rangle_N
    ~
    \f_{t_1}.
\]
In particular, this means that to prove \eqref{eq:god} vanishes, suffices to show
\[
    R\lt(
    \bQ\{(\f_t^{\otimes k-1})_{\perp}\}
    ,\,
    \f_{t_2}
    \rt)
    =0
\]
for all $t_2\leq t$.

Note that by construction,
\[
   (\f_t^{\otimes k-1})_{\perp}
   =
   \sum_{t_1\leq  t} b_{t_1} \f_{t_1}^{\otimes k-1}
    \,.
\]
By the well-conditioning assumption, the $b_{t_1}$ are bounded. Therefore it suffices to show that 
\[
    R\lt(
    \bQ\{\f_{t_2}^{\otimes k-1}\}
    ,\,
    \f_{t_1}
    \rt)
    \stackrel{?}{\simeq} 
    0,
    \quad 
    \forall\, t_1\leq t-1,\,t_2\leq t.
\]
Finally note that each term in the left-hand side includes an overlap $R_s(\btheta_{t_1},\f_{t_2})$. However these all vanish:
\[
    R_s(\btheta_{t_1},\f_{t_2})
    \simeq
    0.
\]
This is because we can substitute $\btheta_{t_1}$ with $\btheta_{t_1}^0$ as defined in \eqref{eq:theta0} and use the fact that $\vR(\tbA\{\f_{t_3}\},\f_t)\simeq 0$ which follows from Lemma \ref{lem:4}. This completes the proof.
\end{proof}

\subsubsection{Proof of $(c)$}

The base case of initialization is handled by the following basic fact.
\begin{proposition}
\label{prop:W2-converge-basic}
    Let $\mu\in\cP(\bbR^k)$ be a probability distribution with finite second moment. Then if $E_1,\dots,E_N\stackrel{i.i.d.}{\sim}\mu$ and $\hat\mu_N=\frac{1}{N}\sum_{i=1}^N \delta_{E_i}$, one has
    \[
    \plim_{N\to\infty}
    \bbW_2(\hat\mu_N,\mu)=0.
    \]
\end{proposition}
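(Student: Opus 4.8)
The statement to prove is Proposition~\ref{prop:W2-converge-basic}: for $\mu\in\cP(\bbR^k)$ with finite second moment and $E_1,\dots,E_N\stackrel{i.i.d.}{\sim}\mu$, the empirical measure $\hat\mu_N$ satisfies $\bbW_2(\hat\mu_N,\mu)\to0$ in probability. This is a standard fact; let me sketch the proof.

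\textbf{Plan.} The plan is to use the characterization that $\bbW_2(\hat\mu_N,\mu)\to 0$ (in probability, or even a.s.) is equivalent to the weak convergence $\hat\mu_N\Rightarrow\mu$ together with convergence of second moments $\int\|x\|^2\,\de\hat\mu_N\to\int\|x\|^2\,\de\mu$; see e.g.\ Villani's \emph{Optimal Transport} (Theorem 6.9). Both ingredients follow from the law of large numbers. First I would note that for any fixed bounded continuous $\psi:\bbR^k\to\bbR$, the strong law of large numbers gives $\int\psi\,\de\hat\mu_N=\frac1N\sum_{i=1}^N\psi(E_i)\to\bbE_\mu[\psi]$ almost surely. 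Taking a countable convergence-determining family of such $\psi$ (e.g.\ a countable dense subset of $C_b$ in the appropriate topology, or products of bounded Lipschitz functions) and intersecting the corresponding full-probability events yields $\hat\mu_N\Rightarrow\mu$ almost surely. Separately, since $\|x\|^2$ is integrable under $\mu$, the strong law again gives $\int\|x\|^2\,\de\hat\mu_N=\frac1N\sum_{i=1}^N\|E_i\|^2\to\bbE_\mu[\|x\|^2]$ almost surely.

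\textbf{Key steps in order.} (1) Invoke the SLLN for bounded test functions to get weak convergence $\hat\mu_N\Rightarrow\mu$ a.s. (2) Invoke the SLLN for $\|x\|^2$ to get second-moment convergence a.s. (3) Apply the standard equivalence: on $\cP_2(\bbR^k)$, $\bbW_2(\nu_n,\nu)\to0$ iff $\nu_n\Rightarrow\nu$ and $\int\|x\|^2\,\de\nu_n\to\int\|x\|^2\,\de\nu$. This yields $\bbW_2(\hat\mu_N,\mu)\to0$ a.s., which in particular implies convergence in probability, giving $\plim_{N\to\infty}\bbW_2(\hat\mu_N,\mu)=0$ as claimed. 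Alternatively, to keep the argument self-contained one can directly bound $\bbW_2^2(\hat\mu_N,\mu)$: for $R>0$ let $\mu^R$ be $\mu$ restricted to the ball $B_R$ and renormalized (similarly $\hat\mu_N^R$); the contribution of the tails outside $B_R$ to $\bbW_2^2$ is controlled by $\int_{\|x\|>R}\|x\|^2\,\de\mu$ plus its empirical analogue, both small for large $R$ uniformly in $N$ by the SLLN, while on the compact ball $\bbW_2$ metrizes weak convergence, so $\bbW_2(\hat\mu_N^R,\mu^R)\to0$ by step~(1).

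\textbf{Main obstacle.} There is no real obstacle here; the only mild subtlety is handling the tails, since $\bbW_2$ is sensitive to mass at infinity and weak convergence alone is insufficient. This is precisely why the finite-second-moment hypothesis on $\mu$ is used: it makes $\|x\|^2$ $\mu$-integrable so that the SLLN applies to it and the tail contributions vanish. One should also be slightly careful that the exceptional null sets coming from the different test functions and from the second moment can be combined into a single null set, which is immediate since there are countably many of them. I would cite \cite{villani2009optimal} (or an equivalent reference) for the $\bbW_2$-convergence equivalence rather than reproving it.
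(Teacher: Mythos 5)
Your proposal is correct and follows essentially the same route as the paper's own (very terse) proof: both reduce the claim to weak convergence of $\hat\mu_N$ plus convergence of second moments, each obtained from the law of large numbers, and then invoke the standard equivalence characterizing $\bbW_2$-convergence. Your version simply fills in the details (countable convergence-determining family, combining null sets, tail control) that the paper leaves implicit.
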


\begin{proof}
    It suffices to show that $\hat\mu_N\to\mu$ weakly in probability and show convergence in probability of the $L^2$ norm. The first is clear and the second holds by the law of large numbers.
\end{proof}

Continuing to the inductive step, recall that the process $(U^{k,t}_s)_{t\ge 1}$ is Gaussian by construction, and independent of $U^{k,0}_s$. Define 
\begin{align*}
    C_{t_1,t_2,s} &= \E\big[U^{k,t_1}_s U^{k,t_2}_s\big]\,;
    \\
    \bC_{\le t,s} &= (C_{t_1,t_2,s})_{t_1,t_2\le t}\,.
\end{align*}
We then have
\begin{equation} 
\label{eq:wt-alpha}
\begin{aligned}
    \E[U^{k,t+1}_s| U^{k,0}_s,\dots,U^{k,t}_s]
    &=
    \sum_{t_1=1}^t 
    \wt\alpha_{t_1,s} U^{k,t_1}
    \, ;
    \\
    \wt\alpha_{t_1,s}
    & \equiv
    \sum_{t_2=1}^t
    (\bC^{-1}_{\le t,s})_{t_1,t_2}C_{t_2,t+1,s}
    \, .
\end{aligned}
\end{equation} 
Here in writing $(\bC^{-1}_{\le t,s})_{t_1,t_2}$, we view $\bC_{\le t,s}$ as a $(t+1)\times (t+1)$ matrix for each $s\in\sS$.

On the other hand, from point $(a)$, we know that
\begin{equation} 
\label{eq:alpha}
\begin{aligned}
    \E[\qq^{k,t+1}_{s}|\mathcal F_t]
    & = 
    \sum_{1\leq t_1\leq t}  
    \alpha_{t_1,s} 
    \qq^{t_1,k}_s\, ;
    \\
    \alpha_{t_1,s}
    & \equiv
    \sum_{t_2=1}^t
    (\bG^{-1}_{\xi^{k,s},t-1})_{t_1-1,t_2-1} 
    (\bG_{\xi^{k,s},t})_{t_2-1,t}
    \, .
\end{aligned}
\end{equation} 
Moreover the induction hypothesis of \eqref{eq:ConvergenceLAMP} implies that for $t_1,t_2 \le t$,
\begin{equation}
\label{eq:augment-e-for-W2}
    (\bG_{\xi^{k,s},t})_{t_1,t_2} 
    \simeq 
    \bbE\lt[
    \xi^{k,s}
    \lt(
    f_{t_1}(W^0_s,\dots,W^{t_1}_s;E^0_s,\dots,E^{t_1}_s), f_{t}(W^0_s,\dots,W^{t_2}_s;E^0_s,\dots,E^{t_2}_s)\}
    \rt)
    \rt]
    \, .
\end{equation}
(Recall that by definition $W^t_s \equiv \sum_{k\le D} U^{k,t}_s$, while $\f_t=f_t(\VV_t;\bE_t)$ here.)

Therefore, from the definition of the process $(U^{k,t}_s)_{t\ge 0}$,
\[
    (\bG_{\xi^{k,s},t})_{t_1,t_2} \simeq C_{t_1+1,t_2+1,s}
    ,
    \quad\quad \forall t_1,t_2\le t.
\]
Recalling that $\bG_{\xi^{k,s},t}$ is well-conditioned, we find (recall \eqref{eq:wt-alpha},\eqref{eq:alpha}):
\[
    \alpha_{t_1,s}\simeq \wt\alpha_{t_1,s}.
\]
Therefore we also have 
\begin{align*}
    \E[\qq^{k,t+1}|\mathcal F_t] - \sum_{t_1=1}^t \wt\alpha_{t_1}\diamond \qq^{k,t_1}
    &=
    \sum_{t_1=1}^t (\alpha_{t_1}-\wt\alpha_{t_1}) \diamond \qq^{k,t_1}
    \\
    &\simeq 
    0.
\end{align*}
Moreover, Lemma~\ref{lem:4} (point 4) shows that $\cP^{\perp}_t(\tbA^{(k)})\{\f_t\}\simeq\tbA^{(k)}\{(\f_{t}^{\otimes k-1})_{\perp}\}$ 
has entries which are approximately independent Gaussian with variance 
\[
    \sigma^2_{t,s}\equiv 
    \lt(
    \Gamma^{(k),\nabla}
    \diamond
    (\f_{t}^{\otimes k-1})_{\perp}
    ,\,
    \Gamma^{(k),\nabla}
    \diamond
    (\f_{t}^{\otimes k-1})_{\perp}
    \rt)
\]
on coordinates $i\in\cI_s$, even conditionally on $\cF_t$. 
Therefore
\begin{align}
\label{eq:ReprSE}
  \qq^{k,t+1} &\ed \sum_{t_1=1}^t \wt\alpha_{t_1} \diamond\qq^{k,t_1} +\sigma_t \diamond \bg + \bferr^{k,t+1}\, ,
\end{align}
where $\|\bferr\|_N\simeq 0$ and $\bg\sim\normal(\bfzero,\id_N)$ is independent of everything else.
It now remains to verify that this agrees with the desired covariance.
As proved in the previous point, for any $t_1\le t$,
\begin{align*}
  R\lt(\qq^{k,t+1} ,\qq^{k,t'+1}\rt)_s
  &\simeq 
  \xi^{k,s}\lt(
  \f_t,\f_{t'}
  \rt)
  \\
  &\simeq
  \E\big[ 
  U^{k,t+1}_s U^{k,t'+1}_s
  \big]\, .
\end{align*}
In particular this establishes convergence of the second moment, so in order to  prove \eqref{eq:ConvergenceLAMP} it is sufficient to establish weak convergence. Hence we may assume $\psi:\bbR^{D \times (t+1)}\to\bbR$ is Lipschitz (rather than just pseudo-Lipschitz).

Using the representation \eqref{eq:ReprSE}, and focusing for simplicity on a single $k$, we get
\begin{align*}
  \frac{1}{N_s}
  \sum_{i\in\cI_s}
  \psi\big(\qq_i^{k,\le t}, q_i^{k,t+1}
  ;
  \be^{\leq t}_i
  \big) 
  &\simeq 
  \frac{1}{N_s}
  \sum_{i\in\cI_s}
  \psi\lt(
      \qq_i^{k,\le t},
      \sum_{s=1}^t 
      \wt\alpha_s \qq^{k,s} 
      +
      \sigma_tg_i
      ;
    \be^{\leq t}_i
  \rt)
  \\
  &\simeq 
  \frac{1}{N_s}
  \sum_{i\in\cI_s}
  \bbE^{g\sim\cN(0,1)}
  \psi\lt(
  \qq_i^{k,\le t},
  \sum_{s=1}^t 
  \wt\alpha_s 
  \qq^{k,s} 
  +
  \sigma_t g
  ;
  \be^{\leq t}_i
  \rt)
  \, .
\end{align*}
The second equality above follows by Gaussian concentration since $\psi$ is assumed Lipschitz. Applying the induction hypothesis now implies \eqref{eq:ConvergenceLAMP}, except that $\be^{t+1}$ is not present. 
However since $\be^{t+1}_i$ and $E^{t+1}_s$ have the same law and are both independent of the past, $\bbW_2$ convergence immediately transfers by Proposition~\ref{prop:W2-converge-augment} below. This completes the proof of part $(c)$.

\begin{proposition}
\label{prop:W2-converge-augment}
    Let $\nu_n=\sum_{i=1}^n \delta_{\wh X_i}$ for $n\geq 1$ be a sequence of probability measures on $\bbR^k$ converging to $\nu\in\cP(\bbR^k)$ in $\bbW_2$. 
    Let $\mu\in\cP(\bbR^k)$ be a probability distribution with finite second moment. Let 
    \[
    E_1,\dots,E_N\stackrel{i.i.d.}{\sim}\mu
    \]
    and set
    \[
    \wt\nu_n=\sum_{i=1}^n \delta_{(\wh X_i,E_i)}.
    \]
    Then
    \[
    \plim_{N\to\infty}
    \bbW_2(\wt\nu_N,\nu\otimes\mu)=0.
    \]
\end{proposition}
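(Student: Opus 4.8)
The plan is to establish $\bbW_2(\wt\nu_N,\nu\otimes\mu)\to0$ in probability via the standard characterization of Wasserstein-$2$ convergence on Euclidean space: $\rho_n\to\rho$ in $\bbW_2$ if and only if $\rho_n\to\rho$ weakly and $\int\|x\|^2\,\rho_n(\de x)\to\int\|x\|^2\,\rho(\de x)$. Since weak convergence on $\bbR^{2k}$ is decided by a fixed \emph{countable} family of bounded continuous test functions, it suffices to show (i) for each bounded continuous $\phi:\bbR^{2k}\to\bbR$ one has $\int\phi\,\de\wt\nu_N\to\int\phi\,\de(\nu\otimes\mu)$ in probability, and (ii) $\int\|(x,e)\|^2\,\de\wt\nu_N\to\int\|x\|^2\,\de\nu+\int\|e\|^2\,\de\mu$ in probability. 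Granting (i) and (ii), the usual subsequence device --- every subsequence admits a further subsequence along which all of these countably many convergences hold almost surely, hence along which $\wt\nu_N\to\nu\otimes\mu$ in $\bbW_2$ almost surely --- upgrades the conclusion to convergence in probability.

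Part (ii) is immediate. We have $\int\|(x,e)\|^2\,\de\wt\nu_N=\int\|x\|^2\,\de\nu_N+\frac1N\sum_{i\le N}\|E_i\|^2$. The first summand converges in probability to $\int\|x\|^2\,\de\nu$, a standard consequence of $\bbW_2(\nu_N,\nu)\to0$ (e.g.\ from the triangle inequality $\bigl|(\int\|x\|^2\,\de\nu_N)^{1/2}-(\int\|x\|^2\,\de\nu)^{1/2}\bigr|\le\bbW_2(\nu_N,\nu)$). The second summand converges in probability to $\int\|e\|^2\,\de\mu$ by the weak law of large numbers, using that $\mu$ has finite second moment and the $E_i$ are i.i.d.\ and independent of the $\wh X_j$ (compare Proposition~\ref{prop:W2-converge-basic}).

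Part (i) is where independence of $(E_i)$ from $(\wh X_i)$ is used, to decouple the ``diagonal'' pairing in $\wt\nu_N$ from the product $\nu\otimes\mu$. Fix bounded continuous $\phi$ and set $g(x)=\int\phi(x,e)\,\mu(\de e)$, which is bounded and continuous by dominated convergence. Let $\cG=\sigma(\wh X_i:i\ge1)$; the $E_i$ are i.i.d.\ $\sim\mu$ and independent of $\cG$. Then $\E\bigl[\int\phi\,\de\wt\nu_N\mid\cG\bigr]=\frac1N\sum_{i\le N}g(\wh X_i)=\int g\,\de\nu_N$, which tends to $\int g\,\de\nu=\int\phi\,\de(\nu\otimes\mu)$ in probability since $\bbW_2(\nu_N,\nu)\to0$ forces weak convergence $\nu_N\to\nu$. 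Moreover, conditional independence of the $E_i$ given $\cG$ and boundedness of $\phi$ give $\Var\bigl(\int\phi\,\de\wt\nu_N\mid\cG\bigr)=\frac1{N^2}\sum_{i\le N}\Var\bigl(\phi(\wh X_i,E_i)\mid\cG\bigr)\le\|\phi\|_\infty^2/N$. Conditional Chebyshev then yields $\int\phi\,\de\wt\nu_N-\int g\,\de\nu_N\to0$ in probability, proving (i).

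The argument is elementary, and I do not expect a genuine obstacle. The only point that needs care is that in the intended application $\nu_n\to\nu$ in $\bbW_2$ holds only \emph{in probability} (the $\wh X_i$ are random AMP iterates), so every convergence above must be tracked at the level of convergence in probability and then assembled through the subsequence device; the conditioning on $\cG$ is precisely what lets us treat $(E_i)$ as a fresh independent sample and obtain the $O(1/N)$ conditional variance bound.
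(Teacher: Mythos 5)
Your proof is correct, but it takes a different route from the paper's. The paper argues by coupling: it uses Proposition~\ref{prop:W2-converge-basic} to match the empirical measure $\nu_n$ with the empirical measure of an i.i.d.\ sample $X_1,\dots,X_n\sim\nu$ at transport cost $o_{\bbP}(1)$ (the optimal coupling of two $n$-point empirical measures being a permutation matching), attaches the same $E_i$ to both samples so that the second coordinate contributes zero cost, and then applies Proposition~\ref{prop:W2-converge-basic} once more to the i.i.d.\ sample $(X_i,E_i)$ from $\nu\otimes\mu$; the triangle inequality for $\bbW_2$ finishes the argument in three lines. You instead verify the two halves of the standard characterization of $\bbW_2$-convergence (weak convergence plus convergence of second moments), with the weak-convergence half handled by conditioning on $\cG=\sigma(\wh X_i)$ and a conditional Chebyshev bound, and the whole thing assembled through the subsequence device to manage convergence in probability. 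The paper's coupling is shorter and reuses its existing lemma twice; your argument is more self-contained, makes explicit exactly where the independence of $(E_i)$ from $(\wh X_i)$ enters (the $O(\|\phi\|_\infty^2/N)$ conditional variance), and is more careful about the fact that in the intended application the hypothesis $\bbW_2(\nu_n,\nu)\to 0$ itself holds only in probability --- a point the paper glosses over with its $o_{\bbP}(1)$ notation. Both are complete proofs.
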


\begin{proof}
    Using Proposition~\ref{prop:W2-converge-basic} applied to $\nu$, we can find for any $\eps>0$
    a coupling $\Pi=\big((\wh X_i,X_i)\big)_{i\in [N]}$ of $\nu_n$ with i.i.d. samples $\hat\nu_n$ with transport cost at most $\eps$. Generate independent variables $E_1,\dots,E_N\stackrel{i.i.d.}{\sum}\mu$. Then note that 
    \begin{align*}
    \bbW_2\big(\wt\nu_n,\nu\otimes \mu\big)
    &\leq
    \bbW_2\lt(\wt\nu_n,\sum_{i=1}^n \delta_{(X_i,E_i)}\rt)
    +
    \bbW_2\lt(\sum_{i=1}^n \delta_{(X_i,E_i)},\nu\otimes \mu\rt)
    \\
    &\leq
    \eps + o_{\bbP}(1)
    .
    \end{align*}
    Here in the latter step we used the assumption on the coupling $\Pi$ for the first term and Proposition~\ref{prop:W2-converge-basic} applied to $\nu\otimes \mu$ on the second term. This completes the proof.
\end{proof}

%%%%%%%%%%%%%%%%SE Part 2%%%%%%%%%%%%%%%%%%%%%%%%%%
%\input{se_k art2_ahmed1}
\subsection{Asymptotic Equivalence of AMP and Long AMP}

Here we show that AMP and LAMP produce approximately the same iterates. 
\begin{lemma}
\label{lem:ampequalslamp}
Let $\{\bG^{(k)}\}_{k\le D}$ be standard Gaussian tensors, and $\bA^{(k)} = \Gamma^{(k)}\diamond \bG^{(k)}$ for $k\ge 2$. Consider the corresponding AMP
iterates $\ZZ_{t}\equiv (\bz^{k,t_1})_{k\le D,t_1\le t}$ and LAMP iterates $\bQ_{t}\equiv (\qq^{k,t_1})_{k\le D,t_1\le t}$,
from the same initialization $\ZZ_0=\bQ_0$ satisfying the assumptions of Theorem \ref{thm:mixedAMP}
and Theorem \ref{thm:SELAMP}.

Let $\f_t = f_t(\VV_t;\bE_t)$, $t\ge 0$ be the nonlinearities applied to LAMP iterates.
% and $(\bG_{k,t}(\VV))_{r,s}=\<\f_t,\f_{t'}\>^k$ be the corresponding Gram matrices.
Further assume that there exists a constant $C<\infty$ such that, for all $t\le T$,
\begin{itemize}
\item[$(i)$] The LAMP Gram matrices $\bG_{k,t} = \bG_{k,t}$ are
  well-conditioned as guaranteed by Assumption~\ref{as:well-conditioned}, i.e., 
 \[
    C^{-1}\le \sigma_{\min}(\bG_{k,t})\le \sigma_{\max}(\bG_{k,t})\le C,
    \quad\quad
    \forall k\le D,~t\le T\,.
\]
\item[$(ii)$] Let the linear operator $\cT_{k,t}:\bbR^{N\times t}\to\bbR^{N\times t}$ be defined as per \eqref{eq:Tdef}, with $\bG_{k,t} = \bG_{k,t}(\VV)$,
and $\f_t=f_t(\VV,\bE_t)$, and define $\cL_{k,t} = \bone+\cT_{k,t}$. Then 
\[
    C^{-1}\le \sigma_{\min}(\cL_{k,t})\le \sigma_{\max}(\cL_{k,t})\le C.
\]
\end{itemize}
Then, for any $t\le T$, we have
\begin{align}
\|\ZZ_{t} - \bQ_{t}\|_N\simeq 0 \, .
\end{align}
\end{lemma}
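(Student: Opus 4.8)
\textbf{Proof plan for Lemma~\ref{lem:ampequalslamp}.}

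The plan is to proceed by induction on $t$, showing simultaneously that $\|\ZZ_t - \bQ_t\|_N \simeq 0$ and that the nonlinearities agree asymptotically, i.e. $\|f_{t'}(\W_{t'};\bE_{t'}) - f_{t'}(\VV_{t'};\bE_{t'})\|_N \simeq 0$ for all $t' \le t$, where $\W$ denotes the AMP iterates and $\VV$ the LAMP iterates (these being glued from the $\bz^{k,t}$ and $\qq^{k,t}$ respectively). The base case $t=0$ is trivial since $\ZZ_0 = \bQ_0$ by assumption. For the inductive step, assume the claim through time $t-1$; since the $f_{t',s}$ are Lipschitz, the inductive hypothesis immediately gives $\f^{\AMP}_{t'} \simeq \f^{\LAMP}_{t'}$ for $t' \le t-1$, and hence the Gram matrices $\bG_{\xi^{k,s},t-1}$ and the Onsager-type coefficients built from them agree asymptotically between the two iterations.

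The heart of the argument is to compare, term by term, the AMP update \eqref{eq:AMP-def1} with the LAMP update \eqref{eq:LAMP1}. Both involve applying $\bA^{(k)}\{\cdot\}$ to (asymptotically equal) vectors $\f_{t}$, the difference being that AMP uses the full tensor $\bA^{(k)}$ with an explicit Onsager correction $\sum_{t'\le t} d_{t,t',k}\diamond \f_{t'-1}$, whereas LAMP uses the projected tensor $\cP_t^\perp(\bA^{(k)})$ with the memory terms $\sum_{t_1\le t} h_{t,t_1-1,k}\diamond \qq^{k,t_1}$. The key identity to establish is that
\[
    \bA^{(k)}\{\f_t\} - \sum_{t'\le t} d_{t,t',k}\diamond \f_{t'-1}
    \;\simeq\;
    \cP_t^\perp(\bA^{(k)})\{\f_t\} + \sum_{t_1\le t} h_{t,t_1-1,k}\diamond \qq^{k,t_1},
\]
using the LAMP iterates on both sides. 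For this I would decompose $\bA^{(k)}\{\f_t\} = \cP_t^\perp(\bA^{(k)})\{\f_t\} + \bQ\{\f_t\}$ where $\bQ = \bA^{(k)} - \cP_t^\perp(\bA^{(k)})$ lies in the span of the Lagrange-multiplier tensors (as in the proof of Lemma~\ref{lem:symregression}/Lemma~\ref{lem:LemmaPerp}), and then show $\bQ\{\f_t\}$ matches $\sum_{t_1} (d_{t,\cdot,k}\diamond\f_{t_1-1} + h_{t,\cdot,k}\diamond\qq^{k,t_1})$ up to $o_N(1)$ in $\|\cdot\|_N$. Concretely, $\bQ\{\f_t\}$ expands into terms of the form $\langle \Gamma^{(k),\nabla}\diamond \f_{t_1}^{\otimes k-1}, \Gamma^{(k),\nabla}\diamond \f_t^{\otimes k-1}\rangle_N \btheta_{t_1}$ and $\langle \Gamma^{(k),\nabla}\diamond(\btheta_{t_1}\otimes \f_{t_1}^{\otimes k-2}), \Gamma^{(k),\nabla}\diamond \f_t^{\otimes k-1}\rangle_N \f_{t_1}$, and using the state evolution from Theorem~\ref{thm:SELAMP} (parts (b) and (c)) together with the asymptotic formula $\btheta_{t_1}\simeq \btheta^0_{t_1} = \sum_{t_2}(\bG_{\xi^{k,s},t-1}^{-1})_{t_1,t_2}\diamond\tbA\{\f_{t_2}\}$ established inside Lemma~\ref{lem:LemmaPerp}, one identifies the $\btheta_{t_1}$-coefficients with the Onsager coefficients $d_{t,t',k,s}$ (which are precisely the state-evolution expectations appearing in \eqref{eq:AMP-def2}) and the $\f_{t_1}$-coefficients with the memory coefficients $h_{t,t_1,k,s}$ from \eqref{eq:LAMP2}. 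Here I would invoke the well-conditioning Assumptions~\ref{as:well-conditioned} on $\bG_{\xi^{k,s},t}$ and $\cL_{k,t}$ to control the solutions of the relevant linear systems and ensure the error vectors remain $o_N(1)$ in norm.

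Once this term-by-term matching is done for each $k\le D$, summing over $k$ gives $\bz^{k,t+1} \simeq \qq^{k,t+1}$ in $\|\cdot\|_N$, hence $\|\ZZ_{t+1}-\bQ_{t+1}\|_N \simeq 0$, and Lipschitzness of $f_{t+1}$ closes the induction on the nonlinearities as well. The main obstacle I anticipate is the bookkeeping in the second bullet above: carefully verifying that the Lagrange-multiplier coefficients of the projection residual $\bQ$, when expanded via state evolution, reproduce \emph{exactly} the Onsager coefficients $d_{t,t',k}$ and memory coefficients $h_{t,t_1,k}$ — in particular tracking the matrix $\bG_{\xi^{k,s},t-1}^{-1}$ factors and the role of the derivative tensor $\Gamma^{(k),\nabla}$ — and controlling the cumulative $o_N(1)$ errors across the $O(1)$ many iterations. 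This is essentially the multi-species analogue of the AMP-LAMP equivalence in \cite{ams20}, and I would follow that argument's structure closely, citing it where the computations are identical and highlighting only the points where the species-dependent contractions $\diamond$ and the partial derivatives $\partial_{s'}\xi^{k,s}$ enter.
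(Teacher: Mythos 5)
Your plan is correct and follows essentially the same route as the paper: induction on $t$, splitting the step into (a) replacing AMP iterates by LAMP iterates inside the AMP map via Lipschitzness and tensor operator-norm bounds, and (b) comparing the AMP and LAMP maps on the LAMP iterates via the decomposition $\bA^{(k)} = \cP_t^{\perp}(\bA^{(k)}) + \cP_t^{\parallel}(\bA^{(k)})$, with the parallel part identified with the Onsager plus memory terms through the Lagrange-multiplier representation and state evolution. One detail to fix in the execution: your identification is swapped — the terms proportional to $\btheta_{t_1}$ (which, after the regression $\hZZ_{k,t}\simeq\bQ_t$, become combinations of the $\qq^{k,\cdot}$) reproduce the LAMP memory coefficients $h_{t,t_1,k}$, while the terms proportional to $\f_{t_1}$ reproduce the Onsager coefficients $d_{t,t',k}$, the latter matching being exactly where Gaussian integration by parts (Stein's identity) converts the overlaps $\sum_{t_2}(\bG^{-1}_{\xi^{k,s},t-1})_{t_1,t_2}R_s(\qq^{k,t_2+1},\f_{t-1})$ into the expected partial derivatives $\bbE[\partial f_{t-1,s}/\partial W^{t_1+1}_s]$ appearing in \eqref{eq:AMP-def2}.
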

\begin{proof}
Throughout the proof we will suppress $\bE_t$ and simply write $f_t(\bW_t)$ or $f_t(\bV_t)$ to distinguish AMP and LAMP iterates, and analogously for
$\bG_{k,t}(\bW_t)$ or $\bG_{k,t}(\bV_t)$.
The proof is by induction over the iteration number, so we will assume it to hold at iteration $t$, 
and prove it for iteration  $t+1$. We prove the induction step by establishing the following two facts for each $2\leq k\leq D$:
\begin{align}
\big\|\AMP_{t+1}(\ZZ_{t})_k -\AMP_{t+1}(\bQ_{t})_k \big\|_N&\simeq 0 \, ,\label{eq:LAMPapprox1}\\
\big\|\AMP_{t+1}(\bQ_{t})_k -\LAMP_{t+1}(\bQ_{t})_k \big\|_N &\simeq 0\, . \label{eq:LAMPapprox2}
\end{align}

Let us first consider the claim \eqref{eq:LAMPapprox1}, and note that
\begin{align*}
    \AMP_{t+1}(\ZZ_{t})_k -\AMP_{t+1}(\bQ_{t})_k  &= \bA^{(k)}\{f_t(\bW_t)\}-\bA^{(k)}\{f_t(\bV_t)\} 
    \\
    &\quad - 
    \sum_{t_1\leq t} 
    d_{t,t_1,k}
    \diamond
    \big(f_{t_1-1}(\bW_{t_1-1})-f_{t_1-1}(\bV_{t_1-1})\big)\, ,
\end{align*}
where we wrote $d_{t,t_1,k,s}$ for the coefficients of \eqref{eq:AMP-def2}, with AMP iterates replaced by LAMP iterates. 
We then have 
\begin{align*}
    \big\|\AMP_{t+1}(\ZZ_{t})_k -\AMP_{t+1}(\bQ_{t})_k \big\|_N 
    &\le 
    D_{1,t}+D_{2,t}\, ;
    \\
    D_{1,t} 
    & \equiv 
    \big\| \bA^{(k)}\{f_t(\bW_t)\}-\bA^{(k)}\{f_t(\bV_t)\}\big\|_N\, ,
    \\
    D_{2,t} 
    &\equiv 
    \sum_{t_1\leq t,~s\in\sS} 
    |d_{t,t_1,k,s}| 
    \cdot 
    \big\|f_{t_1-1,s}(\bW_{t_1-1})- f_{t_1-1,s}(\bV_{t_1-1})\big\|_N\, .
\end{align*}
Notice that, by the induction assumption (and recalling that each $f_{t,s}$ is Lipschitz continuous and acts component-wise): 
\begin{equation}
\label{eq:InductionF}
    \big\|f_t(\bW_t)-f_t(\bV_t)\big\|_N
    \le C_T
    \sum_{t_1\le t,~k\le D}\|\bw^{k,t_1}-\bv^{k,t_1}\|_N 
    \simeq 
    0
    \, .
\end{equation}
Further, for any tensor $\bT\in(\bbR^{N})^{\otimes k}$, and any vectors $\bv_1,bv_2\in\bbR^N$,
\begin{align}
\|\bT\{\bv_1\}-\bT\{\bv_{2}\}\|_N\le (N^{\frac{k-2}{2}}\|\bT\|_{\op})  (\|\bv_1\|_N+\|\bv_2\|_N)^{k-2} \|\bv_1-\bv_2\|_N
\end{align}
Using Lemma~\ref{lem:4}, this implies that the following bound holds with high probability for a constant $C$:
\begin{align*}
    D_{1,t} 
    &\le C  
    (\|f_t(\bW_t)\|_N+\|f_t(\bV_t)\|_N)^{k-2} \|f_t(\bW_t)-f_t(\bV_t)\|_N\\
    & \le 
    C  (2\|f_t(\bV_t)\|_N+\|f_t(\bW_t) -f_t(\bV_t)\|_N)^{k-2} \|f_t(\bW_t) -f_t(\bV_t)\|_N
    \\
    &\simeq 0.
\end{align*}
The last step follows from \eqref{eq:InductionF} and Theorem~\ref{thm:SELAMP}, which implies (recall each $f_{t,s}$ is Lipschitz) that
$\|f_t(\bV_t)\|_N\le C$ with probability $1-o(1)$. Notice that the same argument implies $\|f_t(\bW_t)\|_N \le C$ with high probability.

Similarly, $D_{2,t}\simeq 0$ follows since $\|f_{t_1-1}(\bW_{t_1-1})- f_{t_1-1}(\bV_{t_1-1})\|_N\simeq 0$ and $|d_{t,t_1,k,s}|\le C_T$ by construction, thus yielding \eqref{eq:LAMPapprox1}.

We now prove \eqref{eq:LAMPapprox2}. Comparing \eqref{eq:AMP-def2} and \eqref{eq:LAMP1}, with
$\cP_t^{\parallel} = \bfone-\cP_t^{\perp}$ we find
\begin{align}
\label{eq:AMP-LAMP}
    \AMP_{t+1}(\bQ_{t})_k 
    -
    \LAMP_{t+1}(\bQ_{t})_k 
    &= 
    \cP_t^{\parallel} (\bA^{(k)})\{f_t(\bV_t)\}
    -
    \ons_{k,t+1}
    -
    \sum_{0\leq t_1\leq t-1}  
    h_{t,t_1,k}\diamond \qq^{k,t_1+1}\, ,
    \\
\nonumber
    \ons_{k,t+1} 
    &= 
    \sum_{t_1\leq t} 
    d_{t,t_1,k}\diamond f_{t_1-1}(\bV_{t_1-1})
\end{align}
Note that $\cP_t^{\parallel} (\bA^{(k)})=\E\lt[\bA^{(k)}|\cF_t\rt]$, where $\cF_t$ here is the analogous $\sigma$-algebra generated by 
$\{\bq^{k,t_1},\be^{t_1}\}_{t_1\le t, k\le D}$. Equivalently, this is the conditional expectation of $\bA^{(k)}$ given the linear constraints
\begin{align}
\A^{(k)}\{f_{t_1}(\bV_{t_1})\}&=\by_{k,t_1+1}\, ,\;\;\;\;\;\mbox{ for } t_1\in \{0,\dots, t-1\}\, ,
\end{align}
Also notice that, by the induction hypothesis, and the definition of $\by_{k,t_1}$, \eqref{eq:Ydef1}, we have for all $t_1\le t$, 
\begin{equation}
\label{eq:Y-ons}
    \by_{k,t_1} \simeq  \qq^{k,t_1}+\ons_{k,t_1}\, .
\end{equation}
Lemma~\ref{lem:symregression} implies that $\cP_t^{\parallel} (\bA^{(k)})$ takes the form of \eqref{eq:SymmRegression} for a suitable matrix
$\hZZ_{k,t}\in\bbR^{N\times t}$. 
The key claim is that
\begin{equation}
\label{eq:ZequalsQ}
    \hZZ_{k,t} \simeq \bQ_t\, .
\end{equation}
In order to establish this claim, we show that,  under the inductive hypothesis,
\begin{equation}
\label{eq:1+TQ=Y}
    (\bfone+\cT_{k,t})\bQ_t\simeq \bY_{k,t}.
\end{equation}
Since $\cL_{k,t}=\bfone +\cT_{k,t}$ is well-conditioned by assumption, the combination of \eqref{eq:ZZ-eq} and \eqref{eq:1+TQ=Y} implies $\hZZ_{k,t}\simeq \bQ_t$. By \eqref{eq:Y-ons}, in order to prove \eqref{eq:1+TQ=Y}, it is sufficient to show that 
\begin{equation}
\label{eq:this-claim}
    \cT_t\bQ_t
    \simeq 
    \ONS_{k,t}
    \equiv 
    [\ons_{k,1}|\cdots|\ons_{k,t}].
\end{equation}

In order to prove \eqref{eq:this-claim}, we use Theorem~\ref{thm:SELAMP}. Recall that 
\begin{align*}
    C_{t_1,t_2,s} &= \E\{U^{k,t_1}_s U^{k,t_2}_s\}\,,
    \\
    W^{t_1}_s &= \sum_{2\leq k\leq D} U^{k,t_1}_s\,,
    \\
    \bC_{\le t} &= (C_{t_1,t_2,s})_{t_1,t_2\le t}\,.
\end{align*}
(The value $2\leq k\leq D$ is implicitly fixed in the definition of $\bC_{\leq t}$.) By Theorem~\ref{thm:SELAMP}, 
\[
    C_{t_1+1,t_2+1} \simeq \<\qq^{k,t_1+1},\qq^{k,t_2+1}\> 
    \simeq 
    (\bG_{\xi^{k,s},t}(\bV))_{t_1,t_2}
    ,
    \quad\forall~t_1,t_2\le t.
\]
This implies for any $0 \le t_1\le t-1$ and $s\in\sS$,
\begin{align}
\nonumber
&   \sum_{t_2=0}^{t-1}
    (\bG_{\xi^{k,s},t-1}^{-1})_{t_1,t_2}
    R\lt(\qq^{k,t_2+1},f_{t-1}(\VV_{t-1})\rt)_s
    \\
\nonumber
    &\simeq 
    \sum_{t_2=0}^{t-1} 
    (\bC_{\le t,s}^{-1})_{t_1+1,t_2+1} 
    \E\lt[
    U^{k,t_2+1}_s f_{t-1,s}(W^0_s,\dots,W^{t-1}_s;E^0_s,\dots,E^{t-1}_s)
    \rt] 
    \\
\label{eq:Stein}
    &= 
    \E\lt[
        \frac{\partial f_{t-1,s}}{\partial W^{t_1+1}_s} (W^0_s,\dots,W^{t-1}_s;E^0_s,\dots,E^{t-1}_s)
    \rt] 
    \bfone_{t_1\le t-2}\, .
\end{align}
Indeed, Gaussian integration by parts yields the latter expression (it can be done conditionally on the variables $E$ since they are independent). Combining \eqref{eq:Stein} with the definition \eqref{eq:AMP-def2} will now allow us to conclude $\cT_{k,t}\bQ_t\simeq \ONS_{k,t}$ as desired. Indeed for each $s\in\sS$ we have
\begin{align*}
    \big[\cT_{k,t}\bQ_t\big]_{t,s} 
    &= 
    \sum_{t_1=0}^{t-1} 
    % (\bGG_{\xi^{k,s},t-1})_{t_1,t-1} 
    \partial_{s'}\xi^{k,s}(\vR(\f_{t_1},\f_{t-1}))
    \Big(
    \sum_{t_2=0}^{t-1} 
    (\bG_{\xi^{k,s'},t-1}^{-1})_{t_1,t_2} 
    \,
    R_{s'}(\qq^{k,t_2+1}, \f_{t-1})
    \Big)
    \f_{t_1,s}
    \\
    &\simeq 
    \sum_{t_1=0}^{t-2}
    \sum_{s'\in\sS}
    \partial_{s'}\xi^{k,s}
    \big(
    \vR(\f_{t_1}, \f_{t-1})
    \big)
    \cdot 
    \E\lt[
    \frac{\partial f_{t-1,s'}}{\partial W^{t_1+1}_{s'}} 
    (W^0_{s'},\dots,W^{t-1}_{s'})
    \rt]
    \f_{t_1,s}
    \\
    &= 
    \ons_{k,t}.
\end{align*}
Having established \eqref{eq:ZequalsQ}, we now use the formula \eqref{eq:SymmRegression} for $\cP^{\parallel}_t(\bA^{(k)})=\E\big[\bA^{(k)}|\cF_t\big]$. The result is:
\begin{align}
\label{eq:AParallel}
    \cP^{\parallel}_{t}(\bA^{(k)})\{\f_t\}
    &\simeq 
    \sum_{t_1\leq t}
    \big(\alpha_{t_1}
    \diamond
    \qq^{k,t_1}
    +
    \beta_{t_1}
    \diamond
    \f_{t_1}\big)\, ;
    \\
\nonumber
     \alpha_{t_1,s}
     &\equiv
     \sum_{0\leq t_2\le t-1} 
     (\bG_{\xi^{k,s},t-1}^{-1})_{t_1,t_2} 
     \,
     \xi^{k,s}\big(
     \vR(
     f_{t_2}(\VV_{t_2}),f_{t}(\VV_t)
     )
     \big)
     \, ,
     \\
\nonumber
    \beta_{t_1,s} 
    &\equiv
    \sum_{s'\in\sS}
    \partial_{s'}\xi^{k,s}\big(
    \vR(\f_{t_1},\f_{t} )
    \big)
    \lt(
    \sum_{0\le t_2\leq t-1} 
    (\bG_{\xi^{k,s'},t-1}^{-1})_{t_1,t_2} 
    \,
    R_{s'}( \qq^{k,t_2},\f_{t} )
    \rt) 
    .
\end{align}
On the other hand, using again \eqref{eq:Stein} gives
\begin{align*}
    \sum_{t_1\leq t}\beta_{t_1}\diamond \f_{t_1} 
    &\simeq  
    \sum_{t_1 \le t-1}  d_{t,t_1,k}\diamond \f_{t_1-1}  
    =
    \ons_{k,t+1},
\\
    \sum_{t_1\leq t}
    \alpha_{t_1}
    \diamond
    \qq^{k,t_1} 
    &\simeq 
    \sum_{0\leq t_1\leq t-1}  
    h_{t,t_1,k}
    \diamond
    \qq^{k,t_1+1}.
\end{align*}
We conclude from \eqref{eq:AMP-LAMP} that $\|\AMP_{t+1}(\bQ_{t})_k -\LAMP_{t+1}(\bQ_{t})_k  \|_N\simeq 0$. This concludes the proof.
\end{proof}

\end{document}